\newif\ifmathabx
\newcommand{\coloneq}{\mathrel{\mathop:}=}
\DeclareFontFamily{U}{mathb}{\hyphenchar\font45}
\DeclareFontShape{U}{mathb}{m}{n}{
      <5> <6> <7> <8> <9> <10> gen * mathb
      <10.95> mathb10 <12> <14.4> <17.28> <20.74> <24.88> mathb12
      }{}
\DeclareSymbolFont{mathb}{U}{mathb}{m}{n}
\DeclareMathSymbol{\dotdiv}        {2}{mathb}{"01}
\definecolor{darkred}{RGB}{203,65,84}
\definecolor{darkblue}{RGB}{70,130,180}
\providecommand\@dotsep{5}
\renewcommand{\listoftodos}[1][\@todonotes@todolistname]{%
	\@starttoc{tdo}{#1}}
\newcommand{\inv}{^{-1}}
\newcommand{\N}{\mathbb{N}}
\newcommand{\Z}{\mathbb{Z}}
\newcommand{\R}{\mathbb{R}}
\newcommand{\Aut}{\mathrm{Aut}}
\newcommand{\Iso}{\mathrm{Iso}}
\newcommand{\Q}{\mathbb{Q}}
\newcommand{\MAlg}{\mathrm{MAlg}}
\newcommand{\1}{\bm 1}
\newcommand{\0}{\bm 0}
\newcommand{\la}{\left\langle}
\newcommand{\ra}{\right\rangle}
\newcommand{\laBig}{\Bigl\langle}
\newcommand{\raBig}{\Bigr\rangle}
\newcommand{\acts}{\curvearrowright}
\renewcommand{\bigtriangleup}{\triangle}
\newcommand{\Vect}{\mathrm{Vect}}
\newbox\qbox
\def\usecolor#1{\csname\string\color@#1\endcsname\space}
\newcommand\outline[1]{\leavevmode%
	\def\maltext{#1}%
	\setbox\qbox=\hbox{\maltext}%
	\boxgs{Q q 2 Tr \thickness\space w 0 0 0 rg 0 G}{}%
	\copy\qbox%
}
\newcommand\boldify[2][.2]{%
	\def\thickness{#1}%
	\ThisStyle{\outline{$\SavedStyle#2$}}%
}
\def\fplus{\mathbin{\boldify[.6]{+}}}
\newcommand{\abs}[1]{\left\lvert #1\right\rvert}
\newcommand{\absBig}[1]{\Bigl\lvert #1\Bigr\rvert}
\newcommand{\absbig}[1]{\bigl\lvert #1\bigr\rvert}
\newcommand{\norm}[1]{\left\lVert #1\right\rVert}
\newcommand{\normbig}[1]{\bigl\lVert #1\bigr\rVert}
\newcommand{\qf}{\mathrm{qf}}
\DeclareMathOperator{\tp}{\mathrm{tp}}
\DeclareMathOperator{\Th}{\mathrm{Th}}
\DeclareMathOperator{\id}{\mathrm{id}}
\DeclareMathOperator{\dtp}{\partial}
\mathchardef\mhyphen="2D
\newcommand{\lip}{1{\mhyphen}\mathrm{Lip}}
\renewcommand{\bibnamedash}{\leavevmode\raise3pt\hbox to3em{\hrulefill}\space}
\newtheorem*{conv}{Convention}
\date{Mai 2021}
\title{La propriété (T) pour les groupes polonais Roelcke-précompacts}
\author{François Le Maître}
\address{
Université de Paris et Sorbonne Université \\
CNRS, IMJ-PRG \\
F-75006 Paris, France\\
}
\email{francois.le-maitre@imj-prg.fr}
\begin{document}

\maketitle

\section*{Introduction}

Née au début des années 2000 dans les travaux de 
\textcite{benyaacovModeltheorymetric2008},
la théorie des modèles métrique, ou théorie des modèles continue,  
permet d'étendre des techniques issues de la théorie des 
modèles classique à de nouveaux objets issus de l'analyse.

Dans ce texte, nous allons nous intéresser à une utilisation récente et remarquable
de la théorie des modèles continue dans le cadre des groupes polonais: la
preuve par \textcite{ibarluciaInfinitedimensionalPolishGroups2021} 
 du fait que 
tout groupe polonais Roelcke-précompact a la propriété~(T) de Kazhdan. 

Cette preuve illustre très bien la capacité de la théorie des modèles à offrir une approche unifiée
à des problèmes (ou plutôt ici, des groupes) a priori très différents. 
Elle s'appuie sur une notion avancée (tout du moins à mes yeux !)
de théorie des modèles continue, la stabilité locale. Elle utilise également le langage des imaginaires métriques 
\parencite{benyaacovRoelckeprecompactPolishGroup2018},
ce qui la rend particulièrement élégante mais difficile à comprendre pour une personne peu familière avec
la théorie des modèles continue.

Il m'a donc semblé préférable
de donner la preuve dans un cas  particulier mais significatif afin que cet exposé reste accessible 
à un lectorat large. Pour la même raison, j'ai évité de recourir aux imaginaires métriques au prix
d'un argument final plus pédestre. 
Le cas particulier est celui du groupe $\Aut([0,1],\lambda)$ des transformations préservant la mesure
de l'intervalle $[0,1]$ muni de la mesure de Lebesgue $\lambda$. Pour ce groupe, on peut énoncer le théorème
d'Ibarlucía comme suit.

\begin{theo}[\cite{ibarluciaInfinitedimensionalPolishGroups2021}
  ]\label{thm:main}
	Il existe deux transformations préservant la mesure $T_1,T_2\in \Aut([0,1],\lambda)$ telles que
	pour toute $\pi: \Aut([0,1],\lambda)\to\mathcal U(\mathcal H)$ représentation
	unitaire continue sur un espace de Hilbert $\mathcal H$, 
	s'il existe un vecteur $\xi\in\mathcal H$ non nul tel que 
	\[
	\norm{\pi(T_1)\xi-\xi}<\sqrt{2-\sqrt 3}\norm{\xi}\text{ et }
	\norm{\pi(T_2)\xi-\xi}<\sqrt{2-\sqrt 3}\norm{\xi},
	\]
	alors il existe en fait un vecteur $\xi'\in\mathcal H$ non nul \emph{invariant}, c'est-à-dire tel que
	pour tout~$T$ dans $\Aut(X,\mu)$, on ait
	$\pi(T)\xi'=\xi'$ .
\end{theo}

Après une première section qui expose quelques exemples de groupes polonais, dont $\Aut([0,1],\lambda)$,
nous présentons la classe des groupes polonais Roelcke-précompacts. On en donne
diverses caractérisations, dues à Ben Yaacov et Tsankov, notamment la description comme groupes
d'automorphismes agissant de manière approximativement oligomorphe. La section~\ref{sec:(T)}
donne un bref aperçu de la propriété~(T) dans le cadre des groupes polonais. 
Le théorème d'Ibarlucía y est énoncé sous sa forme générale et contextualisé. On explicite
les transformations $T_1$ et $T_2$ qui apparaissent dans l'énoncé ci-dessus, qui proviendront d'une
action ``très libre'' préservant la mesure du groupe libre à deux générateurs $\mathbb F_2$. On
y justifie aussi la présence de la constante $\sqrt{2-\sqrt 3}$ dans l'énoncé ci-dessus tout en expliquant
pourquoi ce résultat se ramène à montrer que toute représentation orthogonale 
de $\Aut([0,1],\lambda)$ sans vecteurs
invariants se restreint en une représentation de $\mathbb F_2$ qui est un multiple de sa représentation régulière
(théorème~\ref{thm:mainbis}).
Pour des raisons de simplicité, on se focalisera sur un énoncé plus faible et on montrera simplement
 qu'une telle représentation contient une copie de la représentation régulière de $\mathbb F_2$ (théorème~\ref{THM:MAINTER}).
On indiquera cependant au lectorat familier avec l'indépendance relative dans les espaces de probabilité
comment obtenir le théorème~\ref{thm:mainbis}, et ce à la toute fin de l'exposé.

Les deux
sections suivantes mettent en place les bases de théorie des modèles métrique
nécessaire à la preuve.  Tout groupe polonais peut en effet se voir comme groupe
d'automorphismes d'une \emph{structure métrique séparable}. Cette dernière le
reflète particulièrement bien dans le cadre du théorème de Ryll-Nardzewzki où
le groupe est Roelcke-précompact et la structure $\aleph_0$-catégorique. Cette idée
cruciale est exploitée depuis quelques années 
\parencite{ibarluciaDynamicalHierarchyRoelcke2016,ibarluciaAutomorphismGroupsRandomized2017,benyaacovEberleinOligomorphicGroups2018,benyaacovRoelckeprecompactPolishGroup2018}
 et est apparue pour la première fois
dans les travaux de 
\textcite{benyaacovWeaklyAlmostPeriodic2016}.

Étant donnée une structure métrique, on définit dans la section~\ref{sec:bases} 
une algèbre de fonctions à valeurs réelles sur cette structure
(la $\R$-algèbre de Banach des \emph{prédicats définissables})
qui correspond à \emph{ce que la théorie des modèles continue peut nous dire de cette structure}.
Un des points essentiels est que cette algèbre de fonctions peut en fait être \emph{interprétée}
dans toute structure satisfaisant la même \emph{théorie} que notre structure de départ, 
et il nous faudra commencer par 
développer les notion de \emph{langage métrique} et de \emph{formules} associées afin de
définir proprement ce qu'on entend par là.

Une autre particularité importante de l'algèbre des prédicats définissables 
est qu'elle est stable par
infimums ou supremums sur la structure elle-même. Comme on le verra, cette propriété
est cruciale dans la preuve du théorème principal.
Les \emph{types} sont les caractères (morphismes
à valeur réelle) de 
l'algèbre des prédicats définissables, l'exemple de base d'un type étant donné par l'évaluation des 
prédicats définissables sur un point de la structure. 
On a alors une correspondance de Gelfand qui nous permet 
d'identifier les prédicats définissables aux fonctions continues sur l'espace des types.
Certaines sous-structures particulières
(les sous-structures \emph{élémentaires}) interpréteront les prédicats définissables
de la même manière que la structure ambiante, et en particulier elles verront les mêmes infimums
et supremums que cette dernière lorsque ça a du sens. 

Les \emph{sous-ensembles définissables} d'une structure, étudiés dans la section~\ref{sec: definissable},
sont en quelque sorte les sous-ensembles que la théorie des modèles ``voit''. 
Notamment, lorsque l'on applique un infimum ou un supremum à un prédicat
définissable restreint à 
un sous-ensemble définissable, on obtient un prédicat définissable (cf.\ proposition~\ref{prop:quantif sur def}), ce qui est
crucial pour la preuve du théorème~\ref{thm:main}.
Le théorème de Ryll-Nardzewski implique en effet que le complété à gauche du groupe des automorphismes
de toute structure $\aleph_0$-catégorique s'identifie à un sous-ensemble définissable de la structure,
permettant ainsi de ``transférer'' une action par isométries du groupe sur la
structure (cf.\ \textcite{benyaacovRoelckeprecompactPolishGroup2018}
pour une formulation précise en termes d'imaginaires métriques).
On renvoie à l'appendice pour la preuve du théorème de Ryll-Nardzewski;
on l'établira de manière directe pour $\Aut([0,1],\lambda)$ à la fin de la section~\ref{sec: definissable}, prouvant au passage 
que la théorie de la structure qui lui est associée a l'\emph{élimination des quantificateurs}.

Nous donnerons enfin les éléments de preuve du théorème~\ref{thm:main} dans la section 6. On partira
d'une représentation orthogonale de $G$ sans vecteurs invariants que l'on
l'étendra en une représentation orthogonale de son complété à gauche, 
de sorte à transférer la représentation orthogonale sur la structure via le théorème de Ryll-Nardzewski. 
Il s'agira alors d'exploiter les propriétés d'indépendance de l'action préservant la mesure du groupe libre
afin de voir que la restriction de notre représentation orthogonale au groupe libre est
un multiple de sa représentation régulière. C'est ici que la théorie des modèles jouera son rôle, permettant de 
transférer une propriété d'une sous-structure élémentaire à la structure entière pour mener à une contradiction.

Terminons cette introduction en en mentionnant les différences entre la preuve pour $\Aut([0,1],\lambda)$
et celle du théorème général d'Ibarlucía, qui nécessite un travail supplémentaire 
conséquent.
Dans le cas général, on  part d'une 
action ``très libre'' du groupe libre $\mathbb F_2$ sur une structure $\aleph_0$-catégorique \emph{quelconque}, 
pour laquelle 
la liberté de l'action s'énonce en termes de \emph{relation d'indépendance stable}.
Pour exhiber une telle action, on
utilise notamment une construction combinatoire astucieuse d'\emph{intervalles}
 sur $\mathbb F_2$  ayant 
des propriétés qui généralisent celles des intervalles de $\Z$. 
De plus, il faut se débarrasser de la ``partie compacte'' de $G$ : les 
raisonnements de la section~\ref{sec:pfautxmu} n'auraient aucune chance d'aboutir pour un groupe compact
puisqu'ils reposent sur la richesse du complété à gauche. Enfin, il faut exploiter les propriétés combinatoires
des intervalles de $\mathbb F_2$ afin de comprendre la représentation que l'on obtient.
Je renvoie le lectorat à \textcite{ibarluciaInfinitedimensionalPolishGroups2021},
 notamment la fin de l'introduction
qui décrit clairement la construction si l'on remplace $\mathbb F_2$ par $\Z$, en espérant que cet exposé lui aura auparavant
fourni quelques clés de lecture.

\paragraph*{Remerciements.} Je remercie chaleureusement Tomás Ibarlucía et Todor Tsankov pour les conversations enrichissantes
que nous avons pu avoir autour de la théorie des modèles continue, et que j'espère que nous continuerons d'avoir !
Merci à Todor également pour m'avoir indiqué une preuve du théorème de Ryll-Nardzewski qui ne fasse 
pas appel au théorème de compacité. Enfin, un grand merci à Nicolas Bourbaki, Alessandro Carderi,
Colin Jahel, Adriane Kaïchouh,
 Romain Tessera
et Todor Tsankov pour leur relecture attentive.

\setcounter{tocdepth}{2}
\tableofcontents
\section{Quelques exemples de groupes polonais}

Un \textbf{espace polonais}\footnote{Cette terminologie, suggérée initialement \og{}pour rire\fg{}
	par Godement à Bourbaki en 1949, rend hommage aux nombreuses contributions
	des mathématiciens polonais dans ce domaine, cf.\ note au bas de la page 67 de \textcite{godementAnalyseMathematiqueIV2003}.
} est un espace topologique séparable admettant 
une distance compatible complète. 
Un \textbf{groupe polonais} est un groupe topologique dont la topologie est polonaise.
Les groupes polonais apparaissent naturellement comme groupes de symétries de diverses structures
mathématiques. 
Lorsque la structure est ``de dimension finie''
on obtient assez souvent un groupe localement compact, mais les groupes polonais
dont il sera question dans cet exposé proviendront plutôt de structures de dimension infinie
(mais néammoins séparables), comme par
exemple un espace de Hilbert séparable de dimension infinie.

Voici trois faits très utiles sur les espaces et groupes polonais.
\begin{fait}
	 Tout produit dénombrable d'espaces polonais est un espace polonais.
\end{fait}
\begin{fait}
	Tout fermé d'un espace polonais est un espace polonais.
\end{fait}
\begin{fait}\label{fait:sg fermé}
	Les sous-groupes d'un groupe polonais qui sont polonais pour la topologie
	induite sont exactement ses sous-groupes fermés.
\end{fait}

Le premier fait se montre en exhibant une distance compatible complète sur le produit,
comme celle que l'on considère dans la convention donnée en début de section~\ref{sec: prp}. 
Le deuxième est immédiat une fois qu'on sait qu'un espace métrique est séparable si et
seulement s'il est à base 
dénombrable d'ouverts. Le troisième utilise un renforcement du deuxième qui caractérise les
sous-espaces polonais d'un espace polonais comme en étant les $G_\delta$ 
(intersections dénombrables d'ouverts),
et est une jolie application du théorème de Baire 
(voir la proposition 2.2.1 de \cite{gaoInvariantDescriptiveSet2009}).

\subsection{Groupes d'automorphismes de structures dénombrables}

%
L'intérêt de la définition qui suit réside dans la variété des exemples qu'elle permet de traiter.

\begin{defi}
	Soit $X$ un ensemble non vide.
	Une \textbf{structure discrète} sur $X$ est la donnée d'une famille 
	$(f_i)_{i\in I}$ de \textbf{fonctions} et d'une famille $(R_j)_{j\in J}$ de \textbf{relations} telles que
	\begin{itemize}
		\item pour tout $i\in I$, il existe $n_i\in\N$ tel que $f_i:X^{n_i}\to X$ ($n_i$ est appelé
		l'\textbf{arité} de la fonction $f_i$)
		\item pour tout $j\in J$, il existe $m_j\in\N$ tel que $R_j\subseteq X^{m_j}$ ($m_j$ est 
		appelé l'\textbf{arité} de la relation $R_j$)
	\end{itemize}
\end{defi}

Remarquons que l'on autorise des fonctions d'arité $0$, que l'on voit comme des éléments de $X$,
aussi appelées \textbf{constantes}. Par exemple un groupe $(G,e,\cdot,^{-1})$ est une structure
où $e$ est la constante représentant l'élément neutre, $\cdot$ est le produit de groupe et $^{-1}$ le passage à l'inverse.

Voici des exemples qui
devraient vous convaincre, si besoin est, qu'un grand nombre de structures qui apparaissent en mathématiques 
peuvent être vues comme des structures
discrètes, au prix d'un nombre parfois élevé de fonctions et de relations. 
\begin{itemize}
	\item Un graphe orienté $G=(V,E)$ (sans arête multiple) est une structure discrète sur l'ensemble $V$, 
	avec pour unique relation $E\subseteq V\times V$. Un graphe colorié par deux couleurs de sommets $0$ et $1$
	peut aussi être vu comme une structure discrète en ajoutant deux relation unaires $R_0$ et $R_1$ qui 
	encodent respectivement l'ensemble des sommets de couleur $0$ et ceux de couleur $1$. On peut 
	également
	voir comme des structures discrètes les graphes à arêtes multiples (en ajoutant des relations binaires)
	ou encore les complexes simpliciaux (en ajoutant des relations $n$-aires). 
	\item Un espace vectoriel $E$ sur un corps $\mathbb K$ est une structure discrète : outre le vecteur nul
	qui  est une fonction d'arité $0$ (constante) et l'addition (fonction binaire), 
	on se donne pour chaque $k\in\mathbb K$
	une fonction $m_k:E\to E$ qui représente la multiplication par le scalaire $k$.
\end{itemize}	

Un \textbf{plongement} d'une structure discrète $(X,(f_i)_{i\in I},(R_j)_{j\in J})$ 
est une \emph{injection} $\varphi\colon X\to X$ qui commute aux relations et aux fonctions: pour tout $i\in I$,
tout \mbox{$(x_1,\ldots,x_{n_i})\in X^{n_i}$}, \[
\varphi(f_i(x_1,\ldots,x_{n_i}))=f_i(\varphi(x_1),\ldots,\varphi(x_{n_i}))
\]
et pour tout $j\in J$, tout $(x_1,\ldots,x_{m_j})\in X^{m_j}$, 
\[
(x_1,\ldots,x_{m_j})\in R_j\iff (\varphi(x_1),\dots,\varphi(x_n))\in R_j.
\]
Un \textbf{automorphisme} est un plongement surjectif.

\begin{prop}
	Soit $G$ le groupe des automorphismes d'une structure dénombrable~$X$. Alors
	$G$~est un groupe polonais pour la topologie produit sur~$X^X$, $X$~étant muni
	de la topologie discrète.
\end{prop}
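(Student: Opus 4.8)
The plan is to show that $G = \Aut(X)$ is a closed subgroup of the Polish group $\mathrm{Sym}(X)$ (the full symmetric group of the countable set $X$, with its standard Polish topology as a subspace of $X^X$ with $X$ discrete), and then invoke the three facts quoted above.

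First I would recall why $\mathrm{Sym}(X)$ itself is Polish. Since $X$ is countable and discrete, $X^X$ with the product topology is a countable product of Polish spaces, hence Polish by the first fact; a basic open set is determined by prescribing the values of a map on finitely many points of $X$. The subset $\mathrm{Sym}(X)$ of bijections is a $G_\delta$ in $X^X$ (injectivity is a countable intersection of open conditions ``$g(x)\neq g(y)$'' over pairs $x\neq y$, and surjectivity is a countable intersection over $z\in X$ of the open conditions ``$\exists x,\ g(x)=z$''), and one checks that inversion is continuous on it, so $\mathrm{Sym}(X)$ is a Polish group. Alternatively, and more cleanly, I would note that $\mathrm{Sym}(X)$ is already known to be Polish and simply use that; the point the proposition really needs is the relative statement.

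Next, the core step: $G = \Aut(X) = \Aut(X, (f_i)_{i\in I}, (R_j)_{j\in J})$ is \emph{closed} in $\mathrm{Sym}(X)$. For this I would write $G$ as an intersection of closed conditions. Compatibility with a relation $R_j$ of arity $m_j$ says: for every tuple $\bar x = (x_1,\dots,x_{m_j}) \in X^{m_j}$, either $\bar x \in R_j$ and $(g(x_1),\dots,g(x_{m_j})) \in R_j$, or $\bar x \notin R_j$ and $(g(x_1),\dots,g(x_{m_j})) \notin R_j$; since $g\mapsto (g(x_1),\dots,g(x_{m_j}))$ is continuous (each coordinate is an evaluation map, continuous for the product topology) and $R_j$ together with its complement are both clopen in $X^{m_j}$ (everything discrete), each such condition is clopen in $\mathrm{Sym}(X)$, and we intersect over the countably many tuples $\bar x$ and the countably many indices $j$. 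Similarly, commuting with a function $f_i$ of arity $n_i$ says: for every $\bar x\in X^{n_i}$, $g(f_i(\bar x)) = f_i(g(x_1),\dots,g(x_{n_i}))$, which is the equality of two continuous $X$-valued functions of $g$, hence a clopen condition; again intersect over all $\bar x$ and all $i$. We must also remember that an automorphism is by definition a \emph{surjective} embedding, but surjectivity is already built into membership in $\mathrm{Sym}(X)$, so nothing more is needed. Thus $G$ is an intersection of countably many closed subsets of $\mathrm{Sym}(X)$, hence closed.

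Finally, I would assemble the pieces: $\mathrm{Sym}(X)$ is a Polish group, $G$ is a closed subgroup of it (closedness was just shown; that $G$ is a subgroup is immediate since compositions and inverses of automorphisms are automorphisms), so by Fact~\ref{fait:sg fermé} the subspace topology on $G$ is Polish, and this is exactly the topology induced from $X^X$. The main obstacle is essentially bookkeeping rather than depth: one has to be careful that the defining conditions of ``automorphism'' really are each \emph{closed} (indeed clopen) in the right topology, and in particular to handle $0$-ary function symbols (constants $c\in X$) correctly — the condition $g(c) = c$ is again a clopen evaluation condition, so constants pose no problem. No compactness or Baire-category argument is needed here; the only slightly non-elementary input, the characterization of Polish subspaces as $G_\delta$'s (used to see $\mathrm{Sym}(X)$ is Polish), can be replaced by directly exhibiting a compatible complete metric on $\mathrm{Sym}(X)$, e.g. $d(g,h) = d_0(g,h) + d_0(g^{-1},h^{-1})$ where $d_0$ is a complete metric on $X^X$.
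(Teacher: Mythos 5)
Your proof is correct, but it takes a different route from the paper's. You realize $G$ as a closed subgroup of $\mathrm{Sym}(X)$ and then invoke Fact~\ref{fait:sg fermé} (closed subgroups of Polish groups are Polish), which rests on the $G_\delta$ characterization of Polish subspaces and the Baire category theorem. The paper deliberately avoids that machinery: it first checks directly that $G$ is a topological group (continuity of composition on $X^X$ and of inversion via the clopen prebasic sets $\{g: g(x)=y\}$), then observes that the set of \emph{embeddings} $X\to X$ is closed in $X^X$, hence Polish, and that $g\mapsto (g,g^{-1})$ is a homeomorphism of $G$ onto the closed set of pairs of embeddings $(f,g)$ with $f\circ g=g\circ f=\mathrm{id}_X$; this exhibits $G$ as homeomorphic to a closed subspace of a Polish space, giving polonité with no appeal to Fact~\ref{fait:sg fermé}. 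Your closing remark — that one could instead put the complete metric $d_0(g,h)+d_0(g^{-1},h^{-1})$ on $\mathrm{Sym}(X)$ — is essentially the paper's graph trick in metric clothing, so the two arguments are close in spirit; the only genuine trade-off is that your version leans on the black box of Fact~\ref{fait:sg fermé} where the paper is self-contained, while your version isolates cleanly the (correct) observation that the automorphism conditions are countable intersections of clopen conditions, including the $0$-ary (constant) case. Both are complete proofs.
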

\begin{proof}
	La composition
	définit une application continue $X^X\times X^X\to X^X$, et 
	le passage à l'inverse définit une application continue  $G\to G$ puisque 
	l'ouvert fermé de (pré)-base $\{g\in G: g(x)=y\}$ est envoyé sur 
	l'ouvert-fermé $\{g\in G: g(y)=x\}$ par l'inversion. Ainsi $G$ est bien un groupe topologique. 
	
	Pour voir que $G$ est polonais pour la topologie induite par $X^X$, on commence par remarquer
	que l'espace des plongements $X\to X$ est un fermé, donc polonais.
	L'application qui à $g\in G$
	associe $(g,g^{-1})$ est un homéomorphisme sur son image, et cette image est l'espace
	fermé (donc polonais) des couples de plongements $(f,g)$ tels que
	$f\circ g=g\circ f=\mathrm{id}_X$, donc $G$ est bien polonais.
\end{proof}

\begin{rema}
	La structure uniforme induite par $X^X$ sur $\mathrm{Aut}(X)$ est la structure uniforme gauche sur $G$.
	La même preuve montre en fait que le groupe des automorphismes de toute structure discrète est 
	\emph{Raikov-complet}.
\end{rema}

Étant donnée une structure dénombrable infinie, 
il est fort possible que son groupe d'automorphismes soit dénombrable (donc discret),
voire trivial. 
En général, les structures que nous considérerons posséderont beaucoup d'automorphismes.
Voici un petit échantillon de structures dénombrables dont les groupes d'automorphismes
sont non dénombrables.  
\begin{exem}
	Si l'on ne met aucune structure sur $X$ dénombrable infini, on obtient le groupe $\mathfrak S(X)$
	des permutations de $X$, aussi noté $\mathfrak S_{\infty}$ quand $X=\N$. Remarquons que quand 
	on a une structure discrète sur $X$ dénombrable infini, en identifiant $X$ à $\N$ on ne change pas
	son groupe d'automorphismes.
	Ainsi le groupe des automorphismes de toute structure dénombrable est isomorphe à un sous-groupe
	fermé de $\mathfrak S_{\infty}$. 
\end{exem}

\begin{exem}\label{ex: random graph}
	Considérons le graphe complet sur l'ensemble $\N$ (sans boucles, non orienté). 
	Chaque arête est alors retirée avec probabilité $1/2$, indépendamment. On obtient
	presque sûrement toujours le même graphe à isomorphisme près, appelé le \textbf{graphe aléatoire}.
	C'est  
	le seul graphe dénombrable non vide satisfaisant la propriété~(*) suivante: pour tous ensembles de sommets
	$S_1$ et $S_2$ finis et disjoints, on peut trouver un sommet $s\not\in S_1\cup S_2$ 
	relié à tous les éléments de $S_1$ mais à aucun élément
	de $S_2$. 
	La preuve de ce fait utilise un argument de va-et-vient que l'on donne tout de suite après. 
	Ce graphe est souvent noté $R$ (pour random), son groupe d'automorphismes $\Aut(R)$ est un groupe 
	polonais. On renvoie à \textcite{cameronRandomGraph1997} pour plus d'informations.
\end{exem}
\begin{proof}[Démonstration de l'unicité du graphe aléatoire à isomorphisme près.]
	Soient $G$ et $H$ deux graphes dénombrables non vides satisfaisant la propriété~(*), 
	alors ils sont nécessairement infinis.
	Notons $V$ et $W$ leurs ensembles de sommets
	respectifs.
	Énumérons $V=\{v_n\colon n\in\N\}$ et $W=\{w_n\colon n\in\N\}$.
	La construction par va-et-vient se fait par récurrence sur
	$n\in\N$ en construisant des bijections $\varphi_n: V_n\to W_n$ où 
	\begin{itemize}
		\item $V_n$ est un sous-ensemble fini de $V$ contenant $\{v_0,\dots,v_n\}$;
		\item $W_n$ est un sous-ensemble fini de $W$ contenant $\{w_0,\dots, w_n\}$
		\item $\varphi_n$ est un isomorphisme entre les graphes induits sur $V_n$ et $W_n$;
	\end{itemize}
    On commence par $\varphi_0$ qui envoie $v_0$ sur $w_0$.
    Puis, $\varphi_n$ étant construit, on l'étend en deux étapes (va puis vient !) de la manière suivante, obtenant
    une application $\psi_n$ intermédiaire après la première étape.
    \begin{itemize}
    	\item \emph{Va.}  Si $v_{n+1}$ était dans le domaine de~$\varphi_n$ on pose simplement $\psi_n=\varphi_n$. Sinon,
    	soit~$S_1$ l'ensemble des éléments de~$V_{n}$ reliés à~$v_n$ et $S_2=V_n\setminus S_1$.
    	Alors on demande que $\psi_n$~prolonge~$\varphi_n$ et on choisit~$\psi_n(v_n)$
    	de sorte à ce qu'il soit relié à tous les éléments de $\varphi_n(S_1)$, mais à aucun élément 
    	de $\varphi_n(S_2)$. Par construction $\psi_n$~est toujours un isomorphisme entre les graphes
    	induits sur son image et son domaine.
    	\item \emph{Vient.} Si $w_{n+1}$ est dans l'image de $\psi_n$, on pose $\varphi_{n+1}=\psi_n$. 
    	Sinon, on applique la même construction que précédemment à $\psi_n\inv$ de sorte à obtenir
    	$\psi_{n+1}$ dont le domaine contienne $w_{n+1}$, et on pose $\varphi_{n+1}=\psi_{n+1}\inv$.
    \end{itemize}
	Alors l'application $\varphi$ obtenue comme réunion des $\varphi_n$ est l'isomorphisme de graphes attendu.
\end{proof}
\begin{exem}
	On munit l'ensemble $\Q$ des rationnels de l'ordre induit par les réels, 
	on obtient la structure discrète $(\Q,<)$ dont le groupe polonais d'automorphismes est noté $\Aut(\Q,<)$.
 	L'ensemble ordonné $(\Q,<)$ est à isomorphisme près le seul ordre total
	dénombrable non vide dense (tout intervalle ouvert est non vide) sans maximum ni minimum, 
	ce qui se montre encore par va-et-vient.
	Les sous-groupes dénombrables de $\Aut(\Q,<)$ sont exactement les groupes dénombrables ordonnables. 
	On ne sait pas si ces derniers peuvent avoir la propriété~(T), mais $\Aut(\Q,<)$ tout entier
	a la propriété~(T) \parencite{tsankovUnitaryRepresentationsOligomorphic2012}. Profitons-en pour mentionner la très jolie construction par 
	\textcite{duchesneGroupPropertyActing2020} de groupes polonais unitairement représentables avec la propriété~(T)  plongés de manière non
	élémentaire dans le groupe des homéomorphismes du cercle .
\end{exem}

\begin{exem}
	Soit $\Sigma$ une surface de type infini, son \emph{groupe modulaire étendu} $\mathrm{Map}^\pm(\Sigma)$
	est le quotient du groupe des homéomorphismes de $\Sigma$ par le sous-groupe des homéomorphismes 
	isotopes à l'identité. On peut construire un \emph{graphe des courbes} dont les sommets
	sont les courbes (applications continues $\mathbb S^1\to \Sigma$ à homotopie et renversement
	d'orientation près), reliées par une arête si elles admettent des représentants disjoints.
	Alors $\mathrm{Map}^{\pm}(\Sigma)$ s'identifie au groupe d'automorphismes de ce graphe 
	\parencite{bavardIsomorphismsBigMapping2020,hernandezIsomorphismsCurveGraphs2018}, et est donc
	un groupe d'automorphismes de structure dénombrable. 
	On recommande le survol d'\textcite{aramayonaBigMappingClass2020} pour plus d'informations sur 
	ces groupes.
\end{exem}

Les groupes d'automorphismes de structures dénombrables sont tous des groupes topologiques 
non archimédiens, c'est-à-dire que l'identité admet une base de voisinages formée de sous-groupes
ouverts (donc également fermés). On peut montrer que tout groupe polonais non archimédien 
est isomorphe au groupe d'automorphismes d'une structure dénombrable
(en particulier, c'est le cas de tout groupe polonais localement compact totalement discontinu). 
Nous verrons une construction métrique similaire qui permet d'englober tous
les groupes polonais (cf.\ remarque~\ref{rmq: groupe autor struct metri}).

\subsection{Groupes d'automorphismes de structures métriques séparables}
\label{sec: groupes d'auto metrique}

\begin{defi}
	Une \textbf{structure métrique} est un espace métrique \textbf{complet} $(X,d)$ muni d'une famille 
	$(f_i)_{i\in I}$ de \textbf{fonctions} et d'une famille $(R_j)_{j\in J}$ de \textbf{relations} telles que
	\begin{itemize}
		\item pour tout $i\in I$, il existe $n_i\in\N$ tel que $f_i:X^{n_i}\to X$, et $f_i$ est 
		uniformément continue \footnote{Si on n'est pas familier avec les espaces uniformes, on munit 
			$X^{n_i}$ de la distance donnée dans la convention en début de section~\ref{sec: prp} afin
			de donner du sens à l'uniforme continuité de $f_i$.};
		\item pour tout $j\in J$, il existe $m_j\in\N$ tel que $R_j:X^{m_j}\to\R$,
		 et $R_j$ est uniformément continue.
	\end{itemize}
\end{defi}

Par rapport au cas discret, \emph{la distance $d$ remplace l'égalité} qui était implicitement vue
comme une relation sur $X$ puisqu'on travaille avec des plongements.
Toute structure discrète peut être vue comme une structure métrique en la munissant de la distance
discrète\footnote{La distance discrète est définie par $\delta(x,y)=0$ si $x=y$ et 	
$\delta(x,y)=1$ sinon.}, et en remplaçant chaque relation $R_j$ par la relation métrique $1-\chi_{R_j}$, 
où $\chi_{R_j}$ désigne la fonction
caractéristique de $R_j$.

\begin{rema}
	Nous ne demandons pas pour l'instant que la distance sur $X$ soit bornée, mais un 
	grand nombre de
	résultats de théorie des modèles s'appuient sur cette hypothèse supplémentaire. On
	verra que pour les groupes d'automorphismes Roelcke-précompacts 
	de structures métriques
	on peut toujours supposer $d$ bornée quitte à changer la structure.
\end{rema}

On appelle \textbf{plongement} d'une structure métrique $(X,d,(f_i)_{i\in I},(R_j)_{j\in J})$ toute application
isométrique $X\to X$ telle que pour 
 tout $i\in I$,
tout $(x_1,\ldots,x_{n_i})\in X^{n_i}$, \[
\varphi(f_i(x_1,\ldots,x_{n_i}))=f_i(\varphi(x_1),\ldots,\varphi(x_{n_i}))
\]
et pour tout $j\in J$, tout $(x_1,\ldots,x_{m_j})\in X^{m_j}$, 
\[
R_j(x_1,\ldots,x_{m_j})=R_j(\varphi(x_1),\dots,\varphi(x_{m_j})).
\]
Un \textbf{automorphisme} est un plongement surjectif. 
\begin{prop}
	Soit $G$ le groupe d'automorphismes d'une structure métrique séparable $(X,d)$. Alors
	$G$ est un groupe polonais pour la topologie de la convergence simple,
	c'est-à-dire la topologie produit sur $X^X$, la structure $X$ étant munie
	de la topologie induite par la distance $d$.
\end{prop}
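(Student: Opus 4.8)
Le plan est de réaliser $G$ comme partie \emph{fermée} d'un produit dénombrable de copies de l'espace métrique complet $(X,d)$, de sorte à conclure grâce aux deux premiers faits rappelés plus haut. Fixons une partie dénombrable dense $D\subseteq X$. Un automorphisme de la structure est en particulier une isométrie \emph{surjective}, donc une bijection $1$-lipschitzienne dont la bijection réciproque est elle aussi une isométrie ; en particulier un tel automorphisme est entièrement déterminé par sa restriction à $D$, que l'on codera par un point de $X^D$ (ou de $X^D\times X^D$ si l'on code en même temps sa réciproque).

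Je commencerais par montrer que la restriction $\rho\colon G\to X^D$, $g\mapsto g|_D$, est un plongement topologique : elle est évidemment continue (pour la topologie de la convergence simple) et injective (deux isométries $1$-lipschitziennes coïncidant sur un dense sont égales), et pour la continuité de $\rho\inv$ on se sert de l'\emph{équi-$1$-lipschitzianité} de $G$. Précisément, si $g_\alpha|_D\to g|_D$ simplement sur $D$, alors pour tout $x\in X$ et tout $a\in D$ la majoration $d(g_\alpha(x),g(x))\le 2\,d(x,a)+d(g_\alpha(a),g(a))$ donne $\limsup_\alpha d(g_\alpha(x),g(x))\le 2\,d(x,a)$, puis $g_\alpha(x)\to g(x)$ en prenant la borne inférieure sur $a\in D$ ; ainsi $g_\alpha\to g$ dans $G$. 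Ceci montre déjà que $G$, homéomorphe à une partie de $X^D$, est séparable et métrisable. On vérifie par ailleurs sans peine que $G$ est un groupe topologique : la composition est continue via $d(g_\alpha h_\alpha(x),gh(x))\le d(h_\alpha(x),h(x))+d(g_\alpha(h(x)),g(h(x)))$, et le passage à l'inverse l'est aussi car, les $g_\alpha$ étant des \emph{isométries}, $d(g_\alpha\inv(x),g\inv(x))=d(x,g_\alpha(g\inv(x)))\to 0$ (c'est le seul endroit où l'on utilise vraiment que les éléments de $G$ sont des isométries et pas de simples applications $1$-lipschitziennes).

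Le point principal sera de voir que cette topologie est \emph{complètement} métrisable. Pour cela je considérerais le plongement $j\colon G\to X^D\times X^D$, $g\mapsto(g|_D,g\inv|_D)$ — c'en est bien un puisque $\rho$ en est déjà un et que $j$ le raffine — et je montrerais que son image est \emph{fermée} dans l'espace polonais $X^D\times X^D$. En effet, si $\bigl(g_\alpha|_D,g_\alpha\inv|_D\bigr)\to(s,t)$, alors $s$ et $t$ sont des applications isométriques de $D$ dans $X$, qui se prolongent donc de manière unique en des plongements isométriques $S,T\colon X\to X$ (complétude de $X$), et l'argument d'équicontinuité ci-dessus montre que $g_\alpha\to S$ et $g_\alpha\inv\to T$ simplement sur $X$ \emph{tout entier}. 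Il ne reste qu'à faire passer à la limite les conditions définissant un automorphisme : $S$ est isométrique ; pour toute fonction $f_i$ et tout $\bar x\in X^{n_i}$, la continuité de $f_i$ et l'appartenance $g_\alpha\in G$ donnent $S(f_i(\bar x))=\lim_\alpha g_\alpha(f_i(\bar x))=\lim_\alpha f_i(g_\alpha(\bar x))=f_i(S(\bar x))$ ; de même $R_j\circ S=R_j$ par continuité de $R_j$, et idem pour $T$ ; enfin $S\circ T=T\circ S=\id_X$, car $d(g_\alpha(T(x)),x)=d(T(x),g_\alpha\inv(x))\to 0$ entraîne $g_\alpha(T(x))\to x$, d'où $S(T(x))=x$ (et symétriquement). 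Ainsi $S$ est une isométrie surjective préservant la structure, c'est-à-dire un élément de $G$ vérifiant $j(S)=(s,t)$ : $j(G)$ est fermée, donc polonaise, donc $G$ l'est aussi. La difficulté est toute entière dans cette dernière étape, et son ressort essentiel est que la convergence simple sur la partie dense $D$ force, par équi-$1$-lipschitzianité, la convergence simple sur $X$ entier, après quoi la seule continuité des $f_i$ et des $R_j$ transfère les relations structurelles à la limite.
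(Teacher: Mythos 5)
Votre démonstration est correcte et suit essentiellement la même stratégie que celle du texte : restriction à une partie dénombrable dense (homéomorphisme sur son image grâce à l'équi-$1$-lipschitzianité et à la complétude de $X$), codage simultané de $g$ et de $g\inv$, puis fermeture de l'image dans un produit dénombrable de copies de $X$ en faisant passer à la limite les conditions d'automorphisme. Vous explicitez simplement les vérifications de fermeture que le texte condense en passant par les espaces intermédiaires $\lip(D,X)$ et $\lip(X,X)$.
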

\begin{proof}
	Considérons l'espace $\lip(X,X)$ des applications $1$-lipschitziennes de $X$ vers lui-même, muni
	de la topologie produit. 	
	En utilisant l'inégalité triangulaire et le fait qu'on a des applications $1$-lipschitziennes, on montre que la composition
	définit une application continue $\lip(X,X)\times \lip(X,X)\to \lip(X,X)$.
	Le passage à l'inverse définit également une application continue  $G\to G$ puisque 
	l'ouvert de (pré)-base $\{g\in G: d(g(x),y)<\epsilon\}$ est envoyé sur 
	l'ouvert $\{g\in G: d(g(y),x)<\epsilon\}$ par l'inversion. Ainsi, $G$ est bien un groupe topologique. 
	
	Fixons un sous-ensemble $D\subseteq X$ dénombrable dense. L'application de restriction 
	$\lip(X,X)\to \lip(D,X)$ est bijective car $X$ est complet, et c'est un homéomorphisme si on munit 
	également $\lip(D,X)$ de la topologie produit. L'espace $X^D$ est polonais pour la topologie produit.
	Or $\lip(D,X)$ est fermé dans $X^D$, donc il est polonais, ainsi $\lip(X,X)$ est lui-même polonais.

	Pour voir que $G$ est polonais pour la topologie induite par $X^X$, on remarque alors
	que l'espace des plongements $X\to X$ est fermé dans $\lip(X,X)$, donc polonais.
	L'application qui à $g\in G$
	associe $(g,g^{-1})$ est un homéomorphisme sur son image, et cette image est l'espace
	fermé (donc polonais) des couples de plongements $(f,g)$ tels que
	$f\circ g=g\circ f=\mathrm{id}_X$, donc $G$ est bien polonais.
\end{proof}

\begin{rema}
	Si $G$ est 
	le groupe d'automorphismes d'une structure métrique $(X,d)$, on a $g_n\to g$ si et seulement si
	pour tout $x\in X$, $d(g_n(x),g(x))\to 0$ quand $n\to +\infty$.
	La topologie du groupe d'automorphismes d'une structure métrique séparable étant métrisable,
	elle est complètement décrite par cette remarque.
\end{rema}

Voici deux exemples de groupes d'automorphismes de structures métriques
séparables qui jouent un rôle important dans cet exposé. 

\begin{exem}
	Soit $(X,d)$ un espace métrique séparable complet, vu comme une structure
	sans relation ni fonction. Alors son groupe d'automorphismes est égal à son groupe d'isométries
	$\Iso(X,d)$, qui est donc un groupe polonais pour la topologie de la convergence simple. 
\end{exem}
\begin{exem}
	Soit $\mathcal H$ un espace de Hilbert réel séparable.
	On le munit de 
	la relation donnée par le produit  scalaire
	$\mathcal H^2\to\R$, de la fonction d'addition des vecteurs, de la constante vecteur nul,
	ainsi  que,  pour
	chaque $t\in\R$, de la fonction multiplication par le scalaire $t$.
	Le groupe des automorphismes de la structure ainsi obtenue 
	est le groupe orthogonal de $\mathcal H$, noté $\mathcal O(\mathcal H)$.
	Si $\mathcal H$ est séparable, on obtient donc un groupe polonais, et sa topologie
	est la topologie induite par la topologie forte de l'espace des opérateurs bornés
	$\mathcal B(\mathcal H)$, qui coïncide
	avec la topologie faible en restriction à $\mathcal O(\mathcal H)$.
	
	De manière similaire,
	 le groupe $\mathcal U(\mathcal H)$ des
	unitaires d'un espace de Hilbert complexe~$\mathcal H$ séparable est un groupe polonais pour la topologie forte.
\end{exem}

Nous allons maintenant présenter dans une section à part le \emph{groupe des transformations 
préservant la mesure} de l'intervalle $[0,1]$ muni de la mesure de Lebesgue $\lambda$, noté $\Aut([0,1],\lambda)$.
On va le définir directement comme groupe des automorphismes d'une structure métrique
appelée \emph{algèbre de mesure}. 

\subsection{Le groupe des automorphismes de l'algèbre de mesure de \texorpdfstring{$([0,1],\lambda)$}{[0,1],lambda}}\label{sec: malg}

	Considérons un espace de probabilité $(X,\mathcal B,\mu)$.
	La mesure $\mu$ permet de munir la tribu~$\mathcal B$ d'une pseudo-distance~$d_\mu$ donnée par $d_\mu(A,B)=\mu(A\bigtriangleup B)$. 
	
	Montrons que $d_\mu$ est complète:
	soit $(A_n)$ une suite de Cauchy, quitte à extraire on peut supposer que
	$d_\mu(A_n\bigtriangleup A_{n+1})<2^{-n}$. D'après le lemme de Borel--Cantelli, pour presque
	tout $x\in X$, il existe $N\in\N$ tel que  $x \not\in A_{n}\bigtriangleup A_{n+1}$ pour tout $n\geq N$, 
	ce qui veut dire que $x$ ne ``change plus d'avis'' quant au fait d'appartenir à $A_n$ ou non. 
	On vérifie alors que l'ensemble $\bigcup_{N\in\N}\bigcap_{n\geq N} A_n$ est la limite recherchée.

	Étant donné un espace de probabilité $(X,\mathcal B, \mu)$,
	l'\textbf{algèbre de mesure} de $(X,\mathcal B,\mu)$,
	notée $\MAlg(X,\mu)$, est la structure dont l'espace métrique sous-jacent est
	obtenu par séparation de $(\mathcal B,d_\mu)$ : on identifie deux 
	ensembles $A,B\in\mathcal B$ lorsque $\mu(A\bigtriangleup B)=0$.

	On munit notre algèbre de mesure des fonctions induites par la différence symétrique, l'intersection
	et l'union	(que l'on continue de noter $\bigtriangleup$, $\cap$ et $\cup$),
	qui sont $1$-lipschitziennes donc uniformément continues.
	On le munit également de la relation $1$-lipschitzienne induite par la mesure $\mu$
	(que l'on continue de noter $\mu$) et on ajoute deux constantes représentant
	la classe d'équivalence de l'ensemble vide $\emptyset$ et celle de $X$ tout entier.
	On obtient alors une structure métrique
	$(\MAlg(X,\mu),d_\mu,\emptyset, X,\cup,\cap, \bigtriangleup,\mu)$.
	
	\begin{rema}
	Le nom ``algèbre de mesure'' vient du fait que munie des opérations appropriées,
	il s'agit d'une \emph{algèbre booléenne}, mais la suite
	de l'exposé ne supposera pas de familiarité avec ces dernières. On recommande
	le chapitre 2 de 
	\textcite{coriLogiqueMathematiqueTome2003} pour une introduction aux algèbres booléennes, 
	et l'encyclopédique \textcite{fremlinMeasureTheoryVol2002} pour la théorie des
	algèbres de mesure.
\end{rema}

	\begin{defi}
		Étant donné un espace de probabilité $(X,\mathcal B,\mu)$, on
		note $\Aut(X,\mu)$ le groupe d'automorphismes de la structure 
		métrique $\MAlg(X,\mu)$.
	\end{defi}
	
	Puisque la distance $d_\mu$ est toujours complète, $\Aut(X,\mu)$ est polonais dès 
	lors que $(\MAlg(X,\mu),d_\mu)$ est séparable.
	En utilisant des réunions finies d'intervalles aux extrémités rationnelles,
	on montre que c'est le cas pour $X=[0,1]$ muni de sa tribu borélienne et 
	$\mu$ la mesure de Lebesgue $\lambda$. Ainsi,
	$\Aut([0,1],\lambda)$
	est bien un groupe polonais. 
    Il est souvent présenté de manière plus directe comme un groupe de bijections
	bimesurables (ou \emph{transformations})
	préservant la mesure de $([0,1],\lambda)$, identifiées
	si elles coïncident sur un ensemble de mesure pleine. Il est
	 clair que si $T$ est une bijection bimesurable préservant la mesure 
	entre deux sous-ensembles
	de mesure pleine de $[0,1]$, alors $T$ induit un élément de $\Aut([0,1],\lambda)$, et réciproquement 
	on peut montrer que tout automorphisme de $\MAlg([0,1],\lambda)$ se relève en une bijection bimesurable
	préservant la mesure (voir par exemple \cite[Thm.~2.15]{glasnerErgodicTheoryJoinings2003}), mais nous n'en aurons pas besoin.
	
	On a la propriété fondamentale suivante, dite d'homogénéité approximative.
	Pour nous, une \textbf{partition} d'un espace de probabilité $(X,\mu)$ est une famille finie
	$(A_i)_{i=1}^n$ d'éléments de $\MAlg(X,\mu)$ deux à deux disjoints  telle que $X=A_1\sqcup\cdots\sqcup A_n$
	(on autorise certains~$A_i$ à être vides).

	\begin{prop}\label{prop: approx homo}
		Soient $(A_1,\dots,A_n)$ et $(B_1,\dots,B_n)$ deux partitions de $([0,1],\lambda)$
		telles que pour tout $i\in\{1,\dots,n\}$, on a $\mu(A_i)=\mu(B_i)$.
		Alors pour tout $\epsilon>0$, il existe~$T$ dans $\Aut([0,1],\lambda)$
		tel que pour tout $i=1,\dots,n$, on a
		$\mu(T(A_i)\bigtriangleup B_i)<\epsilon$.
	\end{prop}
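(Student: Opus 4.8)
The plan is to reduce to the case where every $A_i$ and every $B_i$ is a finite union of intervals, and then to build $T$ explicitly as a piecewise translation.

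First I would set up the reduction. Fix $\delta>0$, to be chosen at the end in terms of $\epsilon$, and discard the indices with $\mu(A_i)=0$ (for which $A_i=B_i=\emptyset$ and there is nothing to do). Since finite unions of intervals are dense in $\MAlg([0,1],\lambda)$, choose for each $i$ a finite union of intervals $\tilde A_i$ with $\mu(A_i\triangle\tilde A_i)<\delta$; replacing $\tilde A_i$ by $\tilde A_i\setminus\bigcup_{j<i}\tilde A_j$ and adjoining $[0,1]\setminus\bigcup_i\tilde A_i$ to the last piece, I may assume the $\tilde A_i$ form a partition into finite unions of intervals, still with $\mu(A_i\triangle\tilde A_i)=O(\delta)$ because the $A_i$ are pairwise disjoint. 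The measures $\mu(\tilde A_i)$ need not equal $\mu(A_i)$, but they differ from them by $O(\delta)$ and sum to $1$; trimming off each piece that is too large a finite union of intervals of the exact required measure, pooling these, and distributing from the pool onto the pieces that are too small finite unions of intervals of the exact required measures, I obtain a partition $(A_i')$ of $[0,1]$ into finite unions of intervals with $\mu(A_i')=\mu(A_i)$ and $\mu(A_i\triangle A_i')=O(\delta)$. Doing the same for $(B_i)$ produces $(B_i')$ with $\mu(B_i')=\mu(B_i)=\mu(A_i')$ and $\mu(B_i\triangle B_i')=O(\delta)$. If I can find $T\in\Aut([0,1],\lambda)$ with $\mu(T(A_i')\triangle B_i')<\epsilon/2$ for all $i$, then, using that $T$ preserves $\mu$ (so $\mu(T(A_i)\triangle T(A_i'))=\mu(A_i\triangle A_i')$) and the triangle inequality for $d_\mu$, I get $\mu(T(A_i)\triangle B_i)\le\mu(A_i\triangle A_i')+\mu(T(A_i')\triangle B_i')+\mu(B_i'\triangle B_i)<\epsilon$ once $\delta$ is small enough.

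Then I would treat the interval case, where one can even arrange $T(A_i)=B_i$. For a fixed $i$, write $A_i$ as a disjoint union of intervals $P^i_1,\dots,P^i_{k_i}$ listed from left to right, and $B_i$ as a disjoint union of intervals $Q^i_1,\dots,Q^i_{l_i}$; both have total length $\mu(A_i)$. Concatenating the $P^i_r$ into a model interval of length $\mu(A_i)$, doing likewise with the $Q^i_s$, and taking the common refinement of the two resulting subdivisions, I get a way of cutting each of $A_i$ and $B_i$ into the same finite number of intervals, in order and with matching lengths; sending each such subinterval of $A_i$ to the corresponding subinterval of $B_i$ by the translation between them defines a measure-preserving bijection $T_i\colon A_i\to B_i$, off the finitely many cut points. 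Since the $A_i$ partition $[0,1]$ and so do the $B_i$, the $T_i$ glue to a bijection $T$ between two full-measure subsets of $[0,1]$ preserving $\lambda$, hence an element of $\Aut([0,1],\lambda)$ with $T(A_i)=B_i$, so $\mu(T(A_i)\triangle B_i)=0$.

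The whole argument is elementary; the only delicate point is the measure-matching in the reduction step — ensuring that the approximating partitions can be taken to consist of finite unions of intervals and to realize exactly the prescribed sequence of measures at the same time — which is the purpose of the trimming-and-redistributing manoeuvre. (Alternatively one could keep track of the discrepancies and let the interval case absorb a measure mismatch of size $O(\delta)$, at the price of slightly heavier bookkeeping.)
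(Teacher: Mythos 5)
Votre démonstration est correcte et suit pour l'essentiel la même stratégie que celle du texte : réduction par densité au cas de réunions finies d'intervalles, puis construction de $T$ par recollement de translations (le texte prend des extrémités rationnelles et découpe en intervalles de longueur $1/q$ là où vous utilisez un raffinement commun des deux subdivisions, variante équivalente). Votre étape de « trimming » rend explicite l'ajustement des mesures après approximation, point que le texte passe sous silence ; c'est un complément bienvenu mais qui ne change pas la nature de l'argument.
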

	\begin{proof}
		On se ramène tout d'abord, par densité, au cas où les $A_i$ et les $B_i$ sont des
		réunions finies d'intervalles semi-ouverts à extrémités rationnelles. Si $q\geq 1$ est le plus 
		grand commun multiple des dénominateurs des extrémités de ces intervalles, on écrit alors
		chaque $A_i$ et $B_i$ comme réunion disjointe de $q\mu(A_i)=q\mu(B_i)$ 
		intervalles de la forme $[\frac kq,\frac{k+1} q[$. 
		On trouve alors la bijection recherchée 
		en recollant des translations qui envoient les intervalles dans $A_i$ sur ceux dans $B_i$.
	\end{proof}
	\begin{rema}\label{rmq: en fait homo}
		On peut en fait montrer que la structure est exactement homogène, c'est-à-dire qu'on peut 
		trouver $T\in \Aut([0,1],\lambda)$ tel que $T(A_i)=B_i$. Une manière de
		procéder est de construire par va-et-vient un isomorphisme 
		entre des sous-algèbres finies ``de plus en plus denses'',
		en étendant successivement l'isomorphisme naturel entre l'algèbre de mesure finie engendrée 
		par les $A_i$ et celle engendrée par les $B_i$.
	\end{rema}

	On a
	une notion naturelle d'isomorphisme d'algèbres de mesure, de sorte qu'étant donnés deux espaces
	de probabilités $(X,\mathcal B_X,\mu)$ et $(Y,\mathcal B_Y,\nu)$,
	si $\varphi: X_0\to Y_0$ est une bijection bimesurable entre deux sous-ensembles mesurables
	$X_0$ et $Y_0$ de mesure pleine, alors $\varphi$ induit un isomorphisme entre leurs 
	algèbres de mesures. En particulier, on peut utiliser l'écriture dyadique pour vérifier
	que l'algèbre de mesure $\MAlg([0,1],\lambda)$ est isomorphe à l'algèbre de mesure
	$\MAlg(\{0,1\}^\N,\mu)$, où $\mu$ est la mesure
	de Bernoulli standard $(\frac 12 \delta_0+\frac 12\delta_1)^{\otimes\N}$.
	
	Ceci nous permet d'identifier $\Aut([0,1],\lambda)$ à $\Aut(\{0,1\}^\N,\mu)$, ou plus généralement
	à $\Aut(\{0,1\}^{X},(\frac 12 \delta_0+\frac 12\delta_1)^{\otimes X})$, où $X$ est un ensemble 
		dénombrable infini \footnote{C'est aussi une conséquence du résultat suivant, plus difficile à démontrer : 
			si $X$ est un espace polonais muni d'une mesure
			de probabilité $\mu$ borélienne sans atomes, alors il existe une bijection bimesurable
			entre $X$ et $[0,1]$ qui préserve la mesure (cf.\ le théorème 17.41 de \cite{kechrisClassicaldescriptiveset1995}).}.
	Dans la suite 
	de l'exposé on utilisera une algèbre de mesure de cette forme pour construire explicitement l'action 
	préservant la mesure de $\mathbb F_2$ dont on aura besoin pour la preuve du théorème principal
	(cf.\ section~\ref{sec: construction action}).

\section{Groupes polonais Roelcke-précompacts}\label{sec: prp}

On va s'intéresser aux actions continues des groupes polonais sur des espaces métriques
afin de caractériser
les groupes polonais Roelcke-précompacts.
Le thème des actions isométriques continues de groupes polonais a pris une 
grande importance depuis les travaux 
de Rosendal sur la propriété~(OB),
puis sur la géométrie des groupes polonais en général \parencite{rosendalCoarseGeometryTopological2018}, 
mais on en dira très peu de choses ici. 

Un outil important est l'espace 
des adhérences d'orbites d'une action isométrique ou des actions diagonales associées à cette action. 
Ceci implique
de faire un choix de distance sur les espaces produits de sorte qu'un produit d'actions isométriques
reste isométrique.

\begin{conv}
	Étant donnés des espaces métriques $(X_i,d_i)_{i\geq 1}$, pour tout $n\in\N$
	on munira systématiquement l'espace produit 
	$\prod_{i=1}^n X_i$
	 de la distance compatible $d^n$
	donnée par
	$$d^n((x_i)_{i=1}^n,(y_i)_{i=1}^n)\coloneq\sum_{i=1}^nd_i(x_i,y_i).$$
	De plus, on munira $\prod_{i\geq 1} X_i$ de la distance compatible $d^\omega$ donnée par
	$$d^\omega((x_i)_{i\geq 1},(y_i)_{i\geq 1})\coloneq\sum_{i\geq 1}2^{-i}\min(1,d(x_i,y_i)).$$
\end{conv}

Notons que si chaque $d_i$ est complète, alors pour tout $n\in\N$ la distance $d^n$ est complète, et $d^\omega$ 
est également complète. 

Cette section suppose parfois une certaine familiarité avec les structures uniformes, comme définies
dans le chapitre 2 de \textcite{bourbakiTopologieGeneraleChapitres2007}, notamment
afin de présenter la notion de Roelcke-précompacité
dans son cadre naturel (section~\ref{sec:Roelcke uniforme}). 
Nous fournirons toutefois des caractérisations qu'un lecteur 
peu familier avec ces dernières pourra préférer (cf.\ section~\ref{sec:cara prp}). 

On suit de près la section 2 de l'article de 
\textcite{benyaacovWeaklyAlmostPeriodic2016}.

\begin{rema}
	Dans la convention précédente,
	n'importe quel choix de distance qui induise la structure uniforme produit et 
	tel que tout produit d'isométries soit une isométrie sur l'espace produit conviendrait.
\end{rema}

\subsection{Actions approximativement oligomorphes}\label{sec:oligo}

Soit $G$ un groupe agissant par isométries sur un espace métrique $(X,d)$,
alors on peut munir $X$ d'une pseudo-distance\footnote{Le fait que ce soit bien
	une pseudo-distance utilise de manière cruciale le fait que $G$ agisse par isométries.} 
$G$-invariante
$\tilde d_G$ définie par $$\tilde d_G(x,y)\coloneq\inf_{g\in G} d(g\cdot x,y).$$
Remarquons tout de suite que comme l'action est isométrique
$$\tilde d_G(x,y)=\inf_{g,h\in G}d(h\inv g\cdot x,h\inv\cdot y)=\inf_{g,h\in G}d( g\cdot x,h\cdot y).$$
 
On a $\tilde d_G(x,y)=0$ si et seulement si $\overline{Gx}=\overline{Gy}$, et
on note $(G\bbslash X,d_{G})$ l'espace métrique obtenu par séparation,
qui s'identifie donc à l'espace des adhérences d'orbites. L'application de 
passage au quotient
$x\mapsto \overline{Gx}$ est uniformément continue et surjective.

Soit $(Y,d_Y)$ un espace métrique. Par passage au quotient,
on a une identification naturelle entre les fonctions continues $G$-invariantes $X\to Y$
et les fonctions continues $G\bbslash X\to Y$. Cette identification préserve l'uniforme continuité.

\begin{lemm}\label{lem:completude}
	Si $(X,d)$ est un espace métrique complet et $G$ agit par 
	isométries sur~$X$, alors $(G\bbslash X,d_G)$
	est un espace métrique complet.
\end{lemm}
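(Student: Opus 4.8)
Le plan est de partir du critère classique : une suite de Cauchy d'un espace métrique converge dès qu'elle admet une sous-suite convergente. Étant donnée une suite de Cauchy $(\overline{Gx_n})_n$ dans $G\bbslash X$, je commencerais donc par en extraire une sous-suite telle que $d_G(\overline{Gx_n},\overline{Gx_{n+1}})<2^{-n}$ pour tout~$n$. Comme $d_G$ est le quotient séparé de la pseudo-distance $\tilde d_G(x,y)=\inf_{g\in G}d(g\cdot x,y)$, ceci fournit pour chaque~$n$ un élément $g_n\in G$ vérifiant $d(g_n\cdot x_n,x_{n+1})<2^{-n}$.

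L'étape centrale consiste à remplacer les $x_n$ par des représentants bien choisis de leurs adhérences d'orbites, de façon à obtenir une suite de Cauchy dans $X$ lui-même. Je pose $h_1\coloneq e$ et, par récurrence, $h_{n+1}\coloneq h_ng_n\inv$, puis $y_n\coloneq h_n\cdot x_n$. En utilisant que $h_n$ et $g_n$ agissent par isométries, on obtient
\[
d(y_{n+1},y_n)=d(h_ng_n\inv\cdot x_{n+1},h_n\cdot x_n)=d(g_n\inv\cdot x_{n+1},x_n)=d(g_n\cdot x_n,x_{n+1})<2^{-n},
\]
donc $(y_n)_n$ est de Cauchy dans $X$. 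C'est exactement ici qu'intervient de manière essentielle l'hypothèse d'action isométrique, et c'est le seul point qui réclame un minimum d'attention (le reste est formel). Par complétude de $(X,d)$, la suite $(y_n)_n$ converge vers un point $y\in X$.

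Il reste à conclure que $\overline{Gx_n}\to\overline{Gy}$ dans $G\bbslash X$. Comme $y_n$ est dans l'orbite de $x_n$, on a $\tilde d_G(y_n,x_n)=0$, c'est-à-dire $\overline{Gy_n}=\overline{Gx_n}$; et comme l'application de passage au quotient $x\mapsto\overline{Gx}$ est $1$-lipschitzienne (puisque $\tilde d_G(x,y)\le d(x,y)$), il vient
\[
d_G(\overline{Gx_n},\overline{Gy})=d_G(\overline{Gy_n},\overline{Gy})\le d(y_n,y)\longrightarrow 0.
\]
La sous-suite converge donc, ce qui entraîne la convergence de la suite de Cauchy initiale, et établit la complétude de $(G\bbslash X,d_G)$. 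Aucune difficulté sérieuse n'est à prévoir; le seul choix un peu astucieux est celui de la suite $(h_n)$ recollant les représentants, arrangée pour que les erreurs se télescopent.
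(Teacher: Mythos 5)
Votre preuve est correcte et suit essentiellement la même stratégie que celle du texte : extraction d'une sous-suite à décroissance géométrique, relèvement en une suite de Cauchy de représentants dans $X$ (le texte choisit $y_{n+1}$ directement dans l'adhérence d'orbite $[x_{n+1}]$ proche de $y_n$, là où vous composez explicitement les $g_n$ pour rester dans l'orbite elle-même — variante purement cosmétique), puis passage au quotient par $1$-lipschitzianité de la projection.
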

\begin{proof}
	Pour $x\in X$, on note $[x]$ l'adhérence de sa $G$-orbite.
	Soit $([x_n])$ une suite de Cauchy, il suffit de montrer qu'elle admet une sous-suite
	convergente. Quitte à extraire, on peut supposer que $d_G([x_n],[x_{n+1}])<2^{-n}$.
	On construit alors par récurrence une suite $(y_n)$ en posant $y_0=x_0$
	puis en choisissant pour chaque $n\in\N$ un élément $y_{n+1}\in [x_{n+1}]$ tel que
	$d(y_{n+1},y_n)<2^{-n}$. La suite $(y_n)$ est de Cauchy et sa limite $y$
	satisfait bien $[y]=\lim_{n\to+\infty}[x_n]$.
\end{proof}

Notons que pour tout $n\in\N$, l'action diagonale de $G$ sur $X^n$ (donnée par 
$g\cdot (x_1,\dots,x_n)\coloneq(g\cdot x_1,\dots,g\cdot x_n)$) est isométrique pour la distance $d^n$ définie au
début de cette section.

\begin{defi}
	Une action par isométries d'un groupe $G$ sur un espace métrique  
	$(X,d)$ est \textbf{approximativement cocompacte}
	si l'espace métrique $(G\bbslash X,d_G)$ est précompact.
\end{defi}

Concrètement, une action par isométries de $G$ sur $(X,d)$ est approximativement cocompacte
si pour chaque $\epsilon>0$ il existe une partie finie $X_0\subseteq X$ telle que 
pour tout~$x$ dans~$ X$, il existe $g\in G$ tel que $d(g\cdot x,X_0)<\epsilon$ (on rappelle que pour
$A\subseteq X$ non vide on définit $d(x,A)\coloneq\inf_{a\in A} d(x,a)$).

D'après le lemme~\ref{lem:completude}, l'action de $G$ sur un espace 
métrique \emph{complet} $(X,d)$ est
approximativement cocompacte si et seulement si l'espace métrique $(G\bbslash X,d_G)$ est
compact.
De plus par densité l'action de $G$  par isométries sur un espace métrique $(X,d)$
est approximativement cocompacte si et seulement si son extension  au complété $(\widehat X,d)$
est approximativement cocompacte, et le complété $\widehat{G\bbslash X}$ s'identifie 
naturellement à $G\bbslash \widehat X$.
Enfin, si $(Y,d_Y)$ est un espace métrique complet
et l'action par isométries de $G$ sur $(X,d)$ est approximativement cocompacte, alors les fonctions continues $G\bbslash \widehat X\to Y$
s'identifient aux fonctions uniformément continues bornées $G$-invariantes $X\to Y$.

\begin{exem}
	Soit $G$ un groupe polonais, soit $d_{\mathcal L}$ une distance invariante à gauche sur $G$.
	Alors l'action par translation à gauche de $G$ sur $(G,d_{\mathcal L})$ est approximativement
	cocompacte puisque transitive, et l'action par translation à gauche sur le complété de $(G,d_{\mathcal L})$ est donc également approximativement
	cocompacte. 
	Par contre, on verra que l'action diagonale par translation à gauche de $G$ sur 
	$(G\times G,d_{\mathcal L}^2)$
	est approximativement cocompacte si et seulement si $G$ est Roelcke-précompact.
	Pour l'heure, remarquons que si $d_{\mathcal L}$ n'est pas bornée,
	alors $G\bbslash (G\times G)$ n'est pas borné, donc pas précompact et l'action n'est ainsi
	pas approximativement cocompacte.
\end{exem}

\begin{defi}
	Une action par isométries d'un groupe $G$ sur un espace métrique $(X,d)$
	est \textbf{approximativement oligomorphe} si pour tout 
	$n\in\N$, l'action diagonale de $G$ sur $(X^n,d^n)$ est approximativement
	cocompacte.
\end{defi}

\begin{rema}\label{rmq: action approx cocompacte sur Xn}
	Comme remarqué par \textcite{benyaacovWeaklyAlmostPeriodic2016}, 
	étant donnée une action par isométries de $G$ sur $(X,d)$,
	cette action
	est approximativement oligomorphe si et seulement si l'action diagonale de $G$ sur $(X^\N,d^\N)$
	est approximativement cocompacte. L'implication directe se montre
	en revenant  à la définition, notant que la distance
	$d^\N$ est très peu sensible à ce qui se passe sur de grandes coordonnées.
	Pour la réciproque, on note que la projection $X^\N\to G\bbslash X^n$
	est uniformément continue et surjective. Elle 
	passe donc au quotient en une application $G\bbslash X^\N\to G\bbslash X^n$ uniformément continue
	surjective. 
	Ainsi, si 
	$G\bbslash X^\N$ est précompact alors $G\bbslash X^n$ aussi. 
\end{rema}

\begin{exem}
	Considérons un groupe $G$ agissant sur un ensemble discret
	$X$ muni de la distance discrète afin que
	l'action soit isométrique.
	Les orbites de $G$ sont alors fermées et la distance
	$d_G$ est la distance discrète sur l'espace des orbites.
	L'action est approximativement oligomorphe si et seulement si pour tout $n\in\N$,
	l'action de~$G$ sur~$X^n$ n'a qu'un nombre fini d'orbites, 
	ce qui signifie que l'action est \emph{oligomorphe}.
	Un exemple de base est fourni par l'action de $\mathfrak S_{\infty}=\mathfrak S(\N)$
	sur $\N$.
	
	Un autre exemple est donné par le groupe $\Aut(R)$ des automorphismes du graphe aléatoire 
	(exemple~\ref{ex: random graph}).
	On montre que deux $n$-uplets $(x_1,\dots,x_n)$ et $(y_1,\dots,y_n)$ sont dans la même orbite
	si et seulement l'application $x_i\mapsto y_i$ est un isomorphisme pour les graphes induits
	(c'est clairement une condition nécessaire, la réciproque se montre par le même 
	va-et-vient que celui qui suit l'exemple~\ref{ex: random graph}, en remplaçant $\varphi_0$
	par l'isomorphisme entre graphes induits $x_i\mapsto y_i$).
	En particulier, $\Aut(R)\bbslash \N^n$ est bien fini.
\end{exem}

\begin{exem}\label{ex: approxi oligo pour malg}
	L'action de $\Aut([0,1],\lambda)$ sur $\MAlg([0,1],\lambda)$ est approximativement oligomorphe.
	On va le montrer en détail puisqu'il s'agit du fil rouge de cet exposé.

	Soit $n\in\N$, on note $\mathcal P_n$ l'espace des partitions à $n$ éléments de $([0,1],\lambda)$.

	Pour $A\in\MAlg(X,\mu)$, notons $A^0\coloneq A$ et $A^1\coloneq X\setminus A=X\bigtriangleup A$. 
	L'application 
	$\Phi\colon (A_1,\dots,A_n)\mapsto\bigl(\mu(A_1^{\delta(1)}\cap\cdots\cap A_n^{\delta(n)})\bigr)_{\delta\in\{0,1\}^n}$
	est une injection $G$-équivariante 
	de $\MAlg(X,\mu)^n$ dans $\mathcal P_{2^n}$ qui est bilipschitzienne (pour voir que son inverse
	est bien lipschitzien, on note que $A_i$ se reconstruit comme la réunion des 
	$A^{\delta(1)}\cap\cdots\cap A^{\delta(n)}$
	pour~$\delta$ dans~$\{0,1\}^n$ satisfaisant $\delta(i)=0$).
	
	Il suffit donc de montrer que pour tout $n\in\N$, $\Aut(X,\mu)\bbslash \mathcal P_n$
	est précompact. Il est immédiat par la proposition~\ref{prop: approx homo} que l'orbite fermée
	de $(A_1,\dots,A_n)$ est déterminée par la mesure de probabilité sur l'ensemble
	$\{1,\dots, n\}$ donnée par $(\mu(A_1),\dots,\mu(A_n))$.
	Comme l'espace $\mathfrak P_n$ des mesures de probabilité sur $\{1,\dots,n\}$ est compact, il suffit alors
	de remarquer que la bijection
	\begin{align*}
	\mathfrak P_n&\to \Aut(X,\mu)\bbslash \mathcal P_n\\
	(p_1,\dots,p_n)&\mapsto [[0,p_1],[p_1,p_1+p_2],\dots ,[1-p_n, 1]]
	\end{align*}
	est continue pour conclure.
\end{exem}
\begin{rema}\label{rmq:probas sur Cantor}
	En mettant les deux étapes ci-dessus bout à bout, on voit que l'espace d'orbites fermées 
	$\Aut([0,1],\lambda)\bbslash \MAlg([0,1],\lambda)^n$ est naturellement homéomorphe 
	à l'espace des mesures de probabilité sur $\{0,1\}^n$, ce qui permet ensuite de montrer
	que  $\Aut([0,1],\lambda)\bbslash \MAlg([0,1],\lambda)^\N$ est naturellement homéomorphe à 
	l'espace des mesures de probabilité sur l'espace de Cantor $\{0,1\}^\N$.
\end{rema}

\begin{rema}
	Bien que les orbites de l'action diagonale de $\Aut(X,\mu)$ sur $\MAlg(X,\mu)^n$ soient fermées
	pour tout $n\in\N$ (et ce, d'après la remarque~\ref{rmq: en fait homo}),
	celle de l'action diagonale sur $\MAlg(X,\mu)^\N$ ne le sont pas. Cette action n'est pas cocompacte, 
	mais seulement approximativement cocompacte.
	
	En utilisant la théorie des modèles continue,
	Ben Yaacov a montré que le groupe polonais Roelcke-précompact $\Aut^*([0,1],\lambda)$
	des transformations qui quasi-préservent la mesure de Lebesgue
	n'admet \emph{aucune} action par isométries cocompacte sur un espace métrique complet
	\parencite{benyaacovRoelckeprecompactPolishGroup2018}.
\end{rema}

\subsection{Complété de Roelcke d'un groupe polonais}\label{sec:Roelcke uniforme}

Dans ce qui suit, étant donné un groupe $G$, une partie $A\subseteq G$ et un élément $g\in G$, 
on note $Ag\coloneq\{ag\colon a\in A\}$ et $gA\coloneq\{ga\colon 
a\in A\}$. Si de plus $B\subseteq G$, on note $AB\coloneq\{ab\colon a\in A, b\in B\}$.

Rappelons tout d'abord que tout groupe topologique $G$
est muni d'une structure uniforme à gauche, notée $\mathcal L$, qui est engendrée par les entourages de la 
diagonale de la forme $\{(g,h)\in G^2\colon g\in hU\}$ où $U$ parcourt les voisinages ouverts de l'identité.
Il est également muni d'une structure uniforme à droite, notée $\mathcal R$, 
engendrée par les entourages de la diagonale
 de la forme $\{(g,h)\in G^2\colon g\in Uh\}$ où $U$ parcourt les voisinages ouverts de l'identité.
 L'application $g\mapsto g\inv$ induit un isomorphisme d'espaces uniformes
 entre $(G,\mathcal L)$ et $(G,\mathcal R)$. 
  
 Toute  distance 
 $d_{\mathcal L}$ sur $G$ compatible avec sa topologie et invariante à gauche induit la structure uniforme
 $\mathcal L$. De même, toute distance 
 $d_{\mathcal R}$ sur $G$ compatible avec sa topologie et invariante à droite induit $\mathcal R$.
 Lorsque $G$ est polonais, de telles distances existent bien d'après le théorème de Birkhoff--Kakutani
 (cf.\ \cite[Thm.~2.1.1]{gaoInvariantDescriptiveSet2009}). Si $G$ est le groupe d'automorphismes d'une
 structure métrique séparable $(X,d_X)$, une distance compatible invariante à gauche $d_{\mathcal L}$ 
 peut se construire comme suit :
 on fixe $(\alpha_i)_{i\in\N}$ dense dans $X$ et on pose 
 $$d_{\mathcal L}(g,h)\coloneq\sum_{i\in\N} 2^{-i}\min(1,d_X(g\cdot \alpha_i,h\cdot \alpha_i)).$$
Par définition, la \textbf{structure de Roelcke} est la structure uniforme la plus grande contenue
à la fois dans les structures uniformes à gauche et à droite. Elle est notée~\mbox{$\mathcal L\wedge\mathcal R$}.
\begin{rema}
	Une quatrière structure uniforme naturelle sur un groupe topologique quelconque est
	celle de Raikov, qui est la plus petite contenant à la fois les structures uniformes à 
	gauche et à droite.
	Si $d_{\mathcal L}$ et $d_{\mathcal R}$ sont compatibles, invariantes à gauche et à droite
	respectivement, alors $d_{\mathcal L}+d_{\mathcal R}$ est compatible et induit la structure uniforme
	de Raikov.
	On peut alors caractériser les groupes polonais comme étant 
	les groupes topologiques à base dénombrable d'ouverts \emph{Raikov-complets} (complets
	pour la structure uniforme de Raikov).
\end{rema}

\begin{lemm}\label{lem: base Roelcke}
	Une base de la structure uniforme de Roelcke est donnée par les entourages de la diagonale de la 
	forme 
	\[
	\{(g,h)\colon g\in G, h\in UgU\},
	\]
	où $U$ parcourt une base de voisinages de l'identité de $G$.
\end{lemm}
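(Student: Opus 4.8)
Le plan est d'introduire le filtre $\mathcal W$ sur $G\times G$ engendré par les ensembles $V_U\coloneq\{(g,h)\colon h\in UgU\}$, où $U$ parcourt une base $\mathcal B$ de voisinages symétriques de l'identité de $G$, et de montrer que $\mathcal W$ n'est autre que la structure uniforme de Roelcke $\mathcal L\wedge\mathcal R$. Il y a trois choses à vérifier : (i) $\mathcal W$ est une structure uniforme, les $V_U$ en formant une base ; (ii) $\mathcal W\subseteq\mathcal L$ et $\mathcal W\subseteq\mathcal R$ ; (iii) toute structure uniforme $\mathcal U'$ contenue à la fois dans $\mathcal L$ et dans $\mathcal R$ est contenue dans $\mathcal W$.

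Le point (ii) est direct : comme $e\in U$, en prenant $u_1=e$ on constate que $V_U\supseteq\{(g,h)\colon h\in gU\}$, qui est un entourage de base de $\mathcal L$ ($U$ étant symétrique), et en prenant $u_2=e$ que $V_U\supseteq\{(g,h)\colon h\in Ug\}$, entourage de base de $\mathcal R$ ; donc tout $V_U$, et par suite tout élément de $\mathcal W$, appartient à $\mathcal L\cap\mathcal R$. Pour (i) : les $V_U$ forment une base de filtre puisque $V_{U_1\cap U_2}\subseteq V_{U_1}\cap V_{U_2}$, et ils contiennent tous la diagonale ; chacun est symétrique car $(g,h)\in V_U^{-1}$ équivaut à $h\in U^{-1}gU^{-1}=UgU$ (on a $U=U^{-1}$) ; enfin l'axiome de la racine carrée résulte de l'inclusion $V_{U'}\circ V_{U'}\subseteq V_U$, valable dès que $U'U'\subseteq U$, un tel voisinage $U'$ existant par continuité du produit en $(e,e)$ : en effet $h\in U'gU'$ et $k\in U'hU'$ donnent $k\in U'U'gU'U'\subseteq UgU$.

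Le point crucial est (iii). Soit $W$ un entourage de $\mathcal U'$. On choisit $W'\in\mathcal U'$ tel que $W'\circ W'\subseteq W$ ; comme $\mathcal U'\subseteq\mathcal L$ et $\mathcal U'\subseteq\mathcal R$, l'entourage $W'$ contient à la fois un entourage de base à gauche $\{(g,h)\colon h\in gV\}$ et un entourage de base à droite $\{(g,h)\colon h\in V'g\}$, pour certains voisinages symétriques $V,V'$ de l'identité. Soit $U\in\mathcal B$ avec $U\subseteq V\cap V'$. Si $(g,h)\in V_U$, écrivons $h=u_1gu_2$ avec $u_1,u_2\in U$, et introduisons le point intermédiaire $h_0\coloneq u_1g$ : alors $h_0\in Ug\subseteq V'g$, donc $(g,h_0)\in W'$, et $h=h_0u_2\in h_0U\subseteq h_0V$, donc $(h_0,h)\in W'$ ; par conséquent $(g,h)\in W'\circ W'\subseteq W$. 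Ceci prouve $V_U\subseteq W$, d'où $W\in\mathcal W$, et finalement $\mathcal U'\subseteq\mathcal W$ ; combiné à (i) et (ii), cela établit $\mathcal W=\mathcal L\wedge\mathcal R$. L'unique point un peu délicat est ce découpage du passage de $g$ à $h$ (où $h\in UgU$) en un pas \og{}à droite\fg{} suivi d'un pas \og{}à gauche\fg{} grâce au point intermédiaire $h_0=u_1g$ ; tout le reste se ramène à des manipulations routinières d'entourages de base.
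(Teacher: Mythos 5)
Votre preuve est correcte et suit essentiellement la même démarche que celle du texte : l'inclusion $V_U\supseteq\{(g,h)\colon h\in gU\}$ et $V_U\supseteq\{(g,h)\colon h\in Ug\}$ via $e\in U$ pour le sens direct, et la décomposition de $h\in UgU$ en un pas à droite puis un pas à gauche via le point intermédiaire $u_1g$, combinée à l'axiome de la racine carrée, pour le sens réciproque. Votre point (i), qui vérifie explicitement que les $V_U$ engendrent bien une structure uniforme, rend simplement explicite une étape que le texte laisse implicite.
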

\begin{proof}	
Si $U$ est un voisinage de l'identité, il contient cette dernière, donc pour tout $g\in G$ l'ensemble
 $UgU$ contient à la fois $Ug$ et $gU$.
Ainsi l'ensemble $\{(g,h)\colon g\in G,h\in UgU\}$ contient à la fois 
$\{(g,h)\colon g\in G, h\in Ug\}$ et $\{(g,h)\colon g\in G, h\in gU\}$ 
donc est bien dans $\mathcal L\wedge \mathcal R$. 

Réciproquement, si $W\in\mathcal L\wedge\mathcal R$, alors on trouve $W'\in\mathcal L\wedge \mathcal R$ 
tel que $W'\circ W'\subseteq W$.
Par définition, $W'$ contient à la fois un élément de $\mathcal L$ 
et un élément de $\mathcal R$. 
Quitte à prendre un voisinage de l'identité symétrique $U$
suffisamment petit, on a donc que $g\in hU$ ou $g\in Uh$ implique $(g,h)\in W'$. En composant on voit que
si $h\in UgU$ alors $(g,h)\in W'\circ W'$, donc $(g,h)\in W$.
\end{proof}

Le lemme précédent et la continuité de la multiplication impliquent que la structure uniforme de Roelcke 
est bien compatible avec la topologie.
%

\begin{defi}
	Un groupe polonais est 
	\textbf{Roelcke-précompact} lorsque sa structure uniforme de Roelcke est précompacte.
\end{defi}

\begin{rema}
	D'après le lemme précédent, $G$ est Roelcke-précompact si et seulement si pour tout voisinage ouvert
	de l'identité $U\subseteq G$ , on peut trouver une partie finie $F\subseteq G$ telle que $G\subseteq UFU$.	
\end{rema}

Supposons désormais $G$ métrisable, soit $d_{\mathcal L}$ une distance invariante à gauche sur 
$G$. 
L'action diagonale à gauche de $G$ sur $G\times G$ muni de la distance $d((g_1,g_2),(h_1,h_2))\coloneq
d_{\mathcal L}(g_1,g_2)+d_{\mathcal L}(h_1,h_2)$ est une action par isométries.
Ses orbites sont fermées donc $G{\bbslash} (G\times G)=G\backslash (G\times G)$, et la distance quotient
sur $G{\bbslash} (G\times G)$ est donnée par 
\[
d_G([1,g_1],[1,g_2])=\inf_{h\in G} \left(d_{\mathcal L}(h,1)+d_{\mathcal L}(hg_1,g_2)\right).
\]

Si on pose $d_{\mathcal R}(g,h)\coloneq d_{\mathcal L}(g^{-1},h^{-1})$, alors 
$d_{\mathcal R}$ est une distance compatible invariante à droite sur $G$. 
En remplaçant $h$ par $hg_1^{-1}$ 
et en utilisant la symétrie,
on trouve 
$$d_G([1,g_1],[1,g_2])=\inf_{h\in G} \left(d_{\mathcal L}(g_1,h)+d_{\mathcal R}(h,g_2)\right).$$
L'application $g\mapsto (1,g)$ passe au quotient en une bijection
entre $G$ et $G{\bbslash} (G\times G)=G\backslash (G\times G)$. 
Le lemme suivant nous dit que modulo cette identification entre $G$ et $G\bbslash (G\times G)$, on a obtenu 
une distance compatible avec la structure de Roelcke.
\begin{lemm}
	Une distance compatible avec la structure uniforme de Roelcke sur $G$ est donnée par
	$$d_{\mathcal L\wedge \mathcal R}(g_1,g_2)=\inf_{h\in G}\left(d_{\mathcal L}(g_1,h)+d_{\mathcal R}(h,g_2)\right).$$
\end{lemm}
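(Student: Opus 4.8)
The plan is to verify two things: first, that the formula $d_{\mathcal L\wedge\mathcal R}(g_1,g_2)=\inf_{h\in G}\bigl(d_{\mathcal L}(g_1,h)+d_{\mathcal R}(h,g_2)\bigr)$ actually defines a pseudo-distance (in fact a distance) on $G$ compatible with the topology, and second, that the uniform structure it induces is exactly the Roelcke uniform structure $\mathcal L\wedge\mathcal R$. The identification of $G\bbslash(G\times G)$ with $G$ via $g\mapsto[1,g]$ and the completeness/quotient-distance machinery from Section~\ref{sec:oligo} already gives us that $d_{\mathcal L\wedge\mathcal R}$ is a genuine distance (it is the quotient distance $d_G$ transported along a bijection), so the triangle inequality, symmetry and positivity come for free; what remains is the comparison of uniform structures.

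First I would show that every Roelcke entourage contains a $d_{\mathcal L\wedge\mathcal R}$-ball entourage, and conversely. By Lemma~\ref{lem: base Roelcke} a base of $\mathcal L\wedge\mathcal R$ is given by the sets $V_U=\{(g,h):h\in UgU\}$ with $U$ ranging over a neighbourhood base of the identity. Fix such a $U$ and pick a compatible left-invariant distance $d_{\mathcal L}$; choose $\epsilon>0$ small enough that the $d_{\mathcal L}$-ball of radius $\epsilon$ around $1$ is contained in $U$ and likewise for $d_{\mathcal R}$. If $d_{\mathcal L\wedge\mathcal R}(g_1,g_2)<\epsilon$, there is $h$ with $d_{\mathcal L}(g_1,h)+d_{\mathcal R}(h,g_2)<\epsilon$, hence $d_{\mathcal L}(g_1,h)<\epsilon$ and $d_{\mathcal R}(h,g_2)<\epsilon$, so $g_1\in hU$ (left-invariance: $d_{\mathcal L}(g_1,h)=d_{\mathcal L}(h^{-1}g_1,1)<\epsilon$) and $g_2\in Uh$ similarly, whence $g_2\in U g_1 U^{-1}$; replacing $U$ by a symmetric neighbourhood at the outset, this gives $(g_1,g_2)\in V_U$. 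So the $d_{\mathcal L\wedge\mathcal R}$-ball entourage of radius $\epsilon$ sits inside $V_U$.

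For the reverse inclusion, suppose $(g_1,g_2)\in V_U$, i.e.\ $g_2=u g_1 u'$ with $u,u'\in U$. Taking $h=u g_1=g_2 u'^{-1}$, we get $d_{\mathcal L}(g_1,h)=d_{\mathcal L}(g_1,ug_1)=d_{\mathcal R}(1,u)$ and $d_{\mathcal R}(h,g_2)=d_{\mathcal R}(g_2 u'^{-1},g_2)=d_{\mathcal L}(1,u')$ — here I use $d_{\mathcal R}(g,h)=d_{\mathcal L}(g^{-1},h^{-1})$ to convert. Wait: $d_{\mathcal L}(g_1, ug_1)$ is not left-invariant-friendly; instead note $d_{\mathcal L}(g_1,h)=d_{\mathcal L}(g_1, ug_1)$, and this is small precisely when $u$ is close to $1$ in $\mathcal R$, which follows from the continuity of multiplication and inversion since $d_{\mathcal L}$ is compatible. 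So if $U$ is taken inside suitable $\mathcal R$- and $\mathcal L$-balls of radius $\epsilon/2$, then $d_{\mathcal L\wedge\mathcal R}(g_1,g_2)<\epsilon$. This shows the two uniform structures have cofinal bases refining one another, hence coincide; compatibility with the topology then follows from the already-established compatibility of $\mathcal L\wedge\mathcal R$ (remark after Lemma~\ref{lem: base Roelcke}). The main obstacle is being careful about which side's invariance applies at each step — the asymmetry between $d_{\mathcal L}$ and $d_{\mathcal R}$ under left- versus right-translation is where a naive computation goes wrong, so I would organize everything through the identities $d_{\mathcal L}(g,h)=d_{\mathcal L}(kg,kh)$ and $d_{\mathcal R}(g,h)=d_{\mathcal R}(gk,hk)=d_{\mathcal L}(g^{-1},h^{-1})$ and invoke continuity of the group operations (rather than invariance) whenever a translation on the "wrong" side appears.
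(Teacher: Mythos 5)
Your setup (the formula is the quotient distance $d_G$ on $G\bbslash(G\times G)$ transported by $g\mapsto[1,g]$, so it is a genuine distance) and your first direction (a $d_{\mathcal L\wedge\mathcal R}$-ball entourage is contained in each $V_U$) are both correct and match the paper. The genuine gap is in the reverse direction. Writing $g_2=ug_1u'$ with $u,u'\in U$, you take $h=ug_1=g_2u'^{-1}$; as you yourself notice, neither $d_{\mathcal L}(g_1,ug_1)$ nor $d_{\mathcal R}(g_2u'^{-1},g_2)$ is controlled by the relevant invariance, since this $h$ is obtained from $g_1$ by a \emph{left} translation and from $g_2$ by a \emph{right} translation. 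Your proposed repair, invoking continuity of the group operations, cannot work. Continuity gives $d_{\mathcal L}(g_1,ug_1)\to 0$ as $u\to 1$ only for \emph{fixed} $g_1$, whereas the inclusion of $V_U$ in a $d_{\mathcal L\wedge\mathcal R}$-entourage requires a bound uniform over all $g_1\in G$. Requiring $\sup_{g\in G}d_{\mathcal L}(g,ug)\leq\epsilon$ for all $u$ in some neighbourhood of $1$ amounts to asking that every $\mathcal L$-entourage contain an $\mathcal R$-entourage, which fails badly for the groups this text cares about: in $\mathfrak S_\infty$ with $d_{\mathcal L}(g,h)=2^{-\min\{n\,:\,g(n)\neq h(n)\}}$, any nontrivial $u$ fixing $0,\dots,N$ but moving some $M>N$ satisfies $d_{\mathcal L}(g,ug)=1$ for any $g$ with $g(0)=M$.

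The fix is simply to choose the other intermediate point: from $g_2=ug_1u'$ set $h=g_1u'=u^{-1}g_2$. Then $d_{\mathcal L}(g_1,h)=d_{\mathcal L}(g_1,g_1u')=d_{\mathcal L}(1,u')$ by left-invariance, and $d_{\mathcal R}(h,g_2)=d_{\mathcal R}(u^{-1}g_2,g_2)=d_{\mathcal R}(u^{-1},1)$ by right-invariance, both at most $\epsilon$ once $U$ is symmetric of $d_{\mathcal L}$-diameter (hence $d_{\mathcal R}$-diameter) at most $\epsilon$; this gives $d_{\mathcal L\wedge\mathcal R}(g_1,g_2)\leq 2\epsilon$. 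This is exactly the paper's computation, which writes the relation as $g_1u_1=u_2g_2$ and takes $h$ to be this common value, precisely so that only the ``right-side'' invariances are ever needed. Your closing remark about organizing the computation around the invariance identities is the right instinct, but the conclusion should be to pick $h$ so that no wrong-side translation appears at all, not to fall back on continuity when one does.
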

\begin{proof}
	Donnons-nous un voisinage symétrique de l'identité $U$ de diamètre au plus $\epsilon$ pour 
	$d_{\mathcal L}$ (et donc par symétrie de diamètre au plus $\epsilon$ pour $d_{\mathcal R}$). 
	Si $g_1\in Ug_2U$, on écrit $g_1u_1=u_2g_2$ avec $u_1,u_2\in U$, 
	alors $d_{\mathcal L}(g_1,g_1u_1)\leq \epsilon$ et $d_{\mathcal R}(g_2,u_2g_2)\leq \epsilon$
	donc $d_{\mathcal L\wedge\mathcal R}(g_1,g_2)\leq 2\epsilon$.
	
	Réciproquement, si on se donne $\epsilon>0$ et une boule ouverte $U$
	autour de l'identité de rayon $\epsilon$ pour la distance $d_{\mathcal L}$
	et pour la distance $d_{\mathcal R}$, si $d_{\mathcal L\wedge \mathcal R}(g_1,g_2)<\epsilon$,
	on trouve $h$ tel que $d_{\mathcal L}(g_1,h)+d_{\mathcal R}(h,g_2)<\epsilon$, 
	ainsi $g_1\in hU$ et $h\in Ug_2$ donc $g_1\in Ug_2U$.
\end{proof}

Soit maintenant $\widehat G_{\mathcal L}$ le complété à gauche de $G$,
notons toujours $d_{\mathcal L}$ la distance obtenue sur $\widehat G_{\mathcal L}$
et $d$ la distance sur $\widehat G_{\mathcal L}\times\widehat G_{\mathcal L}$ définie par 
$d((g_1,g_2),(h_1,h_2))=
d_{\mathcal L}(g_1,g_2)+d_{\mathcal L}(h_1,h_2)$. 
L'action précédente de $G$ s'étend par densité et complétude en une action isométrique
sur $(\widehat G_{\mathcal L}\times \widehat G_{\mathcal L},d)$. 

L'espace métrique quotient $G{\bbslash}(\widehat G_{\mathcal L}\times \widehat G_{\mathcal L})$ est complet.
D'après le lemme précédent l'application $g\mapsto [1,g]$ est un plongement  isométrique
de $(G,d_{\mathcal L\wedge\mathcal R})$ dans $G{\bbslash}(G_{\mathcal L}\times G_{\mathcal L})$, 
et son image est dense. Ceci montre le lemme suivant, où par \textbf{complété de Roelcke} on entend
complété pour la structure uniforme de Roelcke.

\begin{lemm}
	Le complété de Roelcke de $G$ s'identifie à l'espace métrique complet 
	$G{\bbslash}(\widehat G_{\mathcal L}\times \widehat G_{\mathcal L})$.\qed
\end{lemm}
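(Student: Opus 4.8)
The plan is to recognise $G\bbslash(\widehat G_{\mathcal L}\times\widehat G_{\mathcal L})$ as the metric completion of $(G,d_{\mathcal L\wedge\mathcal R})$; since the lemma just established shows that $d_{\mathcal L\wedge\mathcal R}$ is a compatible metric for the Roelcke uniform structure on $G$, its completion is by definition the Roelcke completion. First I would collect the two identities already obtained above. On the one hand, the computation of the quotient distance performed above gives, for $g_1,g_2\in G$,
\[
d_G([1,g_1],[1,g_2])=\inf_{h\in G}\left(d_{\mathcal L}(g_1,h)+d_{\mathcal R}(h,g_2)\right);
\]
that computation only used that the diagonal left action is by the group $G$, and the extended action on $\widehat G_{\mathcal L}\times\widehat G_{\mathcal L}$ is still by $G$, so the same formula holds verbatim in $G\bbslash(\widehat G_{\mathcal L}\times\widehat G_{\mathcal L})$. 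On the other hand, the right-hand side is by definition $d_{\mathcal L\wedge\mathcal R}(g_1,g_2)$. Hence $g\mapsto[1,g]$ is an isometric embedding of $(G,d_{\mathcal L\wedge\mathcal R})$ into $G\bbslash(\widehat G_{\mathcal L}\times\widehat G_{\mathcal L})$.

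Next I would check that this embedding has dense image. Given $(g_1,g_2)\in G\times G$, its $G$-orbit for the diagonal left action contains $(1,g_1\inv g_2)$ (take $h=g_1\inv$), so the image of $G$ under $g\mapsto[1,g]$ already contains the image of the whole of $G\times G$ inside the quotient. Since $G\times G$ is dense in $\widehat G_{\mathcal L}\times\widehat G_{\mathcal L}$ and the quotient map $\widehat G_{\mathcal L}\times\widehat G_{\mathcal L}\to G\bbslash(\widehat G_{\mathcal L}\times\widehat G_{\mathcal L})$ is uniformly continuous and surjective, that image is dense.

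Finally, by Lemma~\ref{lem:completude} applied to the complete metric space $\widehat G_{\mathcal L}\times\widehat G_{\mathcal L}$ equipped with its isometric $G$-action, the space $G\bbslash(\widehat G_{\mathcal L}\times\widehat G_{\mathcal L})$ is complete; being a complete metric space containing an isometric dense copy of $(G,d_{\mathcal L\wedge\mathcal R})$, it is canonically isometric to the completion of the latter, that is, to the Roelcke completion of $G$. The only point that really requires care is the first step: one must be certain that enlarging the ambient space from $G\times G$ to $\widehat G_{\mathcal L}\times\widehat G_{\mathcal L}$ does not alter the distance between points of the form $[1,g]$. This is exactly because the group over which the infimum defining $d_G$ is taken remains $G$ — we complete the space on which $G$ acts, not the acting group — so the expression for $d_G$ on these points is literally the defining formula of $d_{\mathcal L\wedge\mathcal R}$.
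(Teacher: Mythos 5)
Votre démonstration suit exactement la même route que celle du texte : on reconnaît $G\bbslash(\widehat G_{\mathcal L}\times\widehat G_{\mathcal L})$ comme un espace complet (lemme~\ref{lem:completude}) contenant une copie isométrique dense de $(G,d_{\mathcal L\wedge\mathcal R})$ via $g\mapsto[1,g]$, donc comme le complété de Roelcke. Vous explicitez utilement deux points laissés implicites dans le texte — le fait que l'infimum définissant $d_G$ porte toujours sur $h\in G$ et non sur le complété, et l'argument de densité via $[g_1,g_2]=[1,g_1\inv g_2]$ — mais l'approche est identique.
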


\subsection{Caractérisations des groupes polonais Roelcke-précompacts}\label{sec:cara prp}

On vient d'identifier le complété de Roelcke de $G$ avec l'espace métrique 
$G{\bbslash}(\widehat G_{\mathcal L}\times \widehat G_{\mathcal L})$, ce qui a pour corollaire
immédiat le résultat suivant.

\begin{prop}\label{prop: roelcke action Gl}
	Soit $G$ un groupe polonais, soit $d_{\mathcal L}$ une distance invariante à gauche sur $G$
	compatible avec sa topologie et soit $\widehat G_{\mathcal L}$ le complété de $G$ pour 
	$d_{\mathcal L}$.
	Alors $G$ est Roelcke-précompact si et seulement si 
	$G{\bbslash}(\widehat G_{\mathcal L}\times \widehat G_{\mathcal L})$ est compact.\qed
\end{prop}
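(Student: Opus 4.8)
Le plan est d'invoquer le fait standard qu'un espace uniforme (ou métrique) est précompact si et seulement si son complété est compact, et de le combiner avec l'identification du complété de Roelcke de $G$ obtenue dans le lemme qui précède immédiatement.

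D'abord, je rappellerais que, $G$ étant supposé métrisable, la structure uniforme de Roelcke est induite par la distance compatible $d_{\mathcal L\wedge\mathcal R}$ construite ci-dessus ; ainsi, par définition, $G$ est Roelcke-précompact si et seulement si l'espace métrique $(G,d_{\mathcal L\wedge\mathcal R})$ est précompact.

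Ensuite, j'utiliserais qu'un espace métrique est précompact si et seulement si son complété métrique est compact. Or le lemme précédent identifie ce complété à $G{\bbslash}(\widehat G_{\mathcal L}\times\widehat G_{\mathcal L})$ : l'application $g\mapsto[1,g]$ est un plongement isométrique de $(G,d_{\mathcal L\wedge\mathcal R})$ dans cet espace, d'image dense, et l'espace d'arrivée est complet d'après le lemme~\ref{lem:completude}. En mettant bout à bout ces deux équivalences, on conclut que $G$ est Roelcke-précompact si et seulement si $G{\bbslash}(\widehat G_{\mathcal L}\times\widehat G_{\mathcal L})$ est compact.

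Il n'y a ici pas de réelle difficulté — tout le travail a été fait dans les lemmes précédents, ce qui justifie le \texttt{\textbackslash qed} de l'énoncé. Les seuls points à garder en tête sont l'équivalence « précompact + complet = compact » et le fait que le choix de la distance invariante à gauche $d_{\mathcal L}$ est indifférent : toutes ces distances induisent la structure uniforme à gauche $\mathcal L$, donc des complétés $\widehat G_{\mathcal L}$ uniformément homéomorphes et une même structure de Roelcke, de sorte que la compacité de $G{\bbslash}(\widehat G_{\mathcal L}\times\widehat G_{\mathcal L})$ ne dépend pas de ce choix.
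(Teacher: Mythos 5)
Votre démonstration est correcte et suit exactement la voie du texte : la proposition est présentée comme corollaire immédiat du lemme identifiant le complété de Roelcke à l'espace complet $G{\bbslash}(\widehat G_{\mathcal L}\times \widehat G_{\mathcal L})$, combiné à l'équivalence \og{}précompact $\Leftrightarrow$ complété compact\fg{}. La remarque finale sur l'indépendance vis-à-vis du choix de $d_{\mathcal L}$ est un soin supplémentaire bienvenu mais non indispensable.
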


\begin{rema}\label{rmq: lc roelcke}
	Dans le cas où $G$ admet une distance invariante à gauche compatible complète (autrement
	dit, si sa structure uniforme gauche est complète), on a $\widehat G_{\mathcal L}=G$
	et donc $G{\bbslash}(\widehat G_{\mathcal L}\times \widehat G_{\mathcal L})=
	 G{\bbslash}(G\times G)=G\backslash(G\times G)$,
	qui est homéomorphe à $G$. Ainsi $G$ est Roelcke-précompact si et seulement si $G$ est compact.
	Cette remarque s'applique en particulier aux groupes polonais localement compacts  car
	ils admettent toujours une distance invariante à gauche compatible complète.
\end{rema}

On peut maintenant donner les caractérisations de la Roelcke-précompacité
d'un groupe polonais $G$ en termes d'actions \emph{continues} par isométries
sur des espaces métriques $(X,d)$, c'est-à-dire de morphismes continus $G\to \Iso(X,d)$.
Ce résultat va nous fournir de nombreux exemples de groupes polonais Roelcke-précompacts.

\begin{theo}[{\cite[Thm.~2.4]{benyaacovWeaklyAlmostPeriodic2016}}]
	Soit $G$ un groupe polonais. S'équivalent:
	\begin{enumerate}[(i)]
		\item $G$ est Roelcke-précompact;
		\item dès que $G$ agit continûment par isométries sur des espaces métriques $X$
		et $Y$ de manière approximativement cocompacte, l'action diagonale sur $X\times Y$
		est approximativement cocompacte;
		\item dès que $G$ agit continûment par isométries sur un espace métrique 
		de manière approximativement cocompacte, l'action est approximativement oligomorphe;
		\item on peut plonger $G$ dans le groupe d'isométries d'un espace métrique $(X,d)$ complet
		séparable de sorte que l'action associée sur $X$ soit approximativement oligomorphe;
		\item le groupe $G$ est isomorphe au groupe d'automorphismes d'une structure
		métrique séparable sur laquelle il agit de manière approximativement oligomorphe.
	\end{enumerate}
\end{theo}
\begin{proof}
	
	On montre d'abord que les quatre premières assertions sont équivalentes via la chaîne d'implications
	(i) $\Rightarrow$ (ii) $\Rightarrow$ (iii) $\Rightarrow$ (iv) $\Rightarrow$ (i).\\
	
	(i) $\Rightarrow$ (ii). 
	Supposons que $G$ agit continûment par isométries sur $(X,d_X)$ et $(Y,d_Y)$ de
	manière approximativement cocompacte. On rappelle que par convention on munit $X\times Y$
	de la distance $d^2$ donnée par \[
	d^2((x_1,y_1),(x_2,y_2))=d_X(x_1,x_2)+d_Y(y_1,y_2).
	\]	
	Soit $\epsilon>0$, on va montrer
	que l'on peut recouvrir $G\bbslash X\times Y$ par un nombre fini de boules de rayon $4\epsilon$. 
	
	Par précompacité on trouve $X_0\subseteq X$ et $Y_0\subseteq Y$
	finis tels que $G\bbslash X$ soit recouvert par la réunion sur $x\in X_0$ de
	boules de rayon $\epsilon$ centrées en $[x]$
	et que $G\bbslash Y$ soit recouvert par la réunion sur $y\in Y_0$ de
	boules de rayon $\epsilon$ centrées en $[y]$.  Par continuité de l'action de~$G$
	sur~$X$ et sur~$Y$, on trouve un voisinage de l'identité $U\subseteq G$ tel que
	pour tout~$x$ dans~$ X_0$, $d_X(g\cdot x,x)<\epsilon$ et pour tout $y\in Y_0$,
	$d_Y(g\cdot y,y)<\epsilon$. 
	
	D'après le lemme~\ref{lem: base Roelcke}, on dispose d'une partie finie $F\subseteq G$
	telle que $G=UFU$. On va montrer que tout $[x,y]\in G\bbslash (X\times Y)$ 
	est à distance au plus $2\epsilon$ d'un élément du projeté de $(F\cdot X_0\times Y_0)$
	sur $G\bbslash (X\times Y)$.
	
	Soit donc $[x,y]\in G\bbslash (X\times Y)$. Alors il existe $g_1\in G$ tel que $g_1\cdot x$
	soit $\epsilon$-proche de~$X_0$, donc quitte à remplacer $(x,y)$ par $(g_1\cdot x,g_1\cdot y)$ 
	on peut et va supposer
	que $x$ est $\epsilon$-proche de $x_0\in X_0$. 
	
	On dispose de $g\in G$ tel que $g\cdot y$
	soit $\epsilon$-proche de $y_0\in Y_0$. On écrit $g=u_1fu_2$ avec $u_1,u_2\in U$ et $f\in F$.
	
	Comme $d(x,x_0)<\epsilon$ et l'action est par isométries, 
	l'élément $fu_2\cdot x$ est $\epsilon$-proche de $fu_2\cdot x_0$, lui-même $\epsilon$-proche
	de $f\cdot x_0$ puisque $d(x_0,u_2\cdot x_0)<\epsilon$, donc 
	par inégalité triangulaire $d(fu_2\cdot x, f\cdot x_0)<2\epsilon$.
	
	D'autre part, $d(fu_2\cdot y,y_0)=d(u_1fu_2\cdot y, u_1\cdot y_0)$, or $d(u_1\cdot y_0,y_0)<\epsilon$,
	donc $d(u_1fu_2\cdot y, u_1\cdot y_0)<\epsilon +d(u_1fu_2\cdot y, y_0)$ et
	on conclut que $d(fu_2\cdot y,y_0)<2\epsilon$.
	
	Ainsi $d^2(fu_2\cdot(x,y), (f\cdot x_0,y_0))<4\epsilon$.	
	On conclut que tout élément de $G\bbslash X\times Y$ est à distance au plus
	$4\epsilon$ d'un élément du projeté de l'ensemble fini $F\cdot X_0\times Y$ sur 
	$G\bbslash X\times Y$. Comme $\epsilon$ était arbitraire, ceci montre bien que l'action diagonale
	sur $X\times Y$ est approximativement cocompacte.
	\\

	(ii) $\Rightarrow$ (iii). Si l'action de $G$ sur $X$ est approximativement 
	cocompacte, alors la condition (ii) donne par récurrence que 
	l'action sur $X^n$ est approximativement cocompacte pour tout $n\in\N$, donc l'action 
	$G\acts X$ est approximativement oligomorphe.\\

	(iii) $\Rightarrow$ (iv). L'action transitive de $G$ sur $(G,d_{\mathcal L})$ est 
	isométrique, continue, et l'application $G\to \Iso(G_{\mathcal L})$ associée est un plongement de 
	groupes topologiques puisque $g_n\to g$ si et seulement si $g_n\cdot e\to g\cdot e$.
	
	Par complétude cette action s'étend de manière unique en un plongement 
	$G\to \Iso(\widehat G_{\mathcal L},d_{\mathcal L})$, et l'action associée est approximativement
	cocompacte, donc approximativement oligomorphe par (iii).\\

	(iv) $\Rightarrow$ (i). 
	Soit $(\alpha_i)_{i\in\N}$ une suite dense
	d'éléments de $X$, l'application $\Phi:(G,d_{\mathcal L})\to X^\N$
	définie par $\Phi(g)=(g\cdot \alpha_i)_{i\in\N}$ est uniformément continue. 
	Par densité de $(\alpha_i)_{i\in\N}$ et comme $G$ est plongé dans $\Iso(X,d)$, 
	l'application réciproque est également uniformément 
	continue.
	
	De plus, $\Phi$ est $G$-équivariante, et $\Phi$ est donc un plongement bi-uniformément continu
	de l'action $G\acts G_{\mathcal L}$ dans $G\curvearrowright X^\N$.
	L'application $\Phi\times\Phi$ est alors un plongement bi-uniformément continu de 
	l'action diagonale $G\acts G_{\mathcal L}\times G_{\mathcal L}$ dans $G\acts X^\N\times X^\N\simeq X^\N$.
	
	Comme $G\acts X^n$ est approximativement cocompacte pour tout $n\in\N$,
	l'action $G\acts X^\N$ est également approximativement cocompacte
        (cf.\ remarque~\ref{rmq: action approx cocompacte sur Xn}).
	L'existence d'un plongement bi-uniformément continu de $G\acts G_{\mathcal L}\times G_{\mathcal L}$
	dans $G\acts X^\N$ nous garantit alors que
	$G\acts G_{\mathcal L}\times G_{\mathcal L}$ est également approximativement cocompacte,
	ce qui d'après la proposition~\ref{prop: roelcke action Gl} montre que 
	$G$ est Roelcke-précompact.\\
	
	On a donc montré que les quatre premières assertions sont équivalentes. Reste à montrer 
	que (iv) et (v) sont équivalentes. L'implication (v) $\Rightarrow$ (iv) est claire, et pour montrer la 
	réciproque on va suivre une construction de Melleray qui montre que tout groupe polonais
	est isomorphe au groupe des automorphismes d'une structure métrique séparable.
	
	On suppose donc que $G$ est plongé dans $\Iso(X,d)$ et agit de manière approximativement
	oligomorphe. On identifie $G$ à sont image dans $\Iso(X,d)$ qui est fermée.
	Pour chaque $n\in\N$ et chaque adhérence d'orbite $[(x_1,\dots,x_n)]\in G\bbslash X^n$,
	on considère la relation $R_{[(x_1,\dots,x_n)]}$ donnée par 
	$R_{[(x_1,\dots,x_n)]}(y_1,\dots,y_n)=d^n((y_1,\dots,y_n),[x_1,\dots,x_n])$. Montrons
	que l'ajout à $X$ de toutes les relations $R_{[(x_1,\dots,x_n)]}$ 
	réduit son groupe d'automorphismes à~$G$.
	
	Tout d'abord, il est clair que tout élément de $G$ est un automorphisme de notre structure.
	Pour la réciproque, on remarque tout d'abord que
	d'après le fait~\ref{fait:sg fermé} le groupe $G$ est plongé comme sous-groupe fermé de $\Iso(X,d)$
	car les deux sont polonais. Soit maintenant $\sigma$ un automorphisme de $(X,d)$
	muni de la structure définie ci-dessus. Montrons que $\sigma$ est dans l'adhérence de $G$,
	ce qui terminera la preuve puisque $G$ est fermé.
	
	Donnons-nous $x_1,\dots,x_n\in X$ et $\epsilon>0$,
	on doit trouver $g\in G$ tel que $d(\sigma(x_i),g\cdot x_i)<\epsilon$
	pour tout $i\in\{1,\dots,n\}$. Or $R_{[(x_1,\dots,x_n)]}(x_1,\dots,x_n)=0$,
	donc comme $\sigma$ est un automorphisme 
	$R_{[(x_1,\dots,x_n)]}(\sigma(x_1),\dots,\sigma(x_n))=0$, ce qui par définition
	de $R_{[(x_1,\dots,x_n)]}$
	donne le résultat voulu.
\end{proof}
\begin{rema}\label{rmq: groupe autor struct metri}
	Dans la construction précédente, on aurait pu se contenter de choisir des sous-ensembles
	dénombrables denses $D_n$ de $X^n$ pour tout $n\in\N$ afin de n'ajouter à la structure 
	que les 
	$R_{(x_1,\dots,x_n)}$ où $(x_1,\dots,x_n)\in D_n$; on obtient alors $G$ comme groupe d'automorphismes
	d'une structure métrique séparable avec un nombre dénombrable de relations, et aucune fonction.
	De plus, la preuve montre que tout groupe polonais, Roelcke-précompact ou non,
	se réalise comme groupe d'automorphismes d'une structure métrique séparable.
\end{rema}

\begin{exem}
	Les exemples d'actions approximativement oligomorphes de la section~\ref{sec:oligo} nous fournissent des
	des groupes polonais
	Roelcke-précompacts via la caractérisation (v) du théorème précédent. Ainsi le groupe $\mathfrak S_\infty$ des permutations de $\N$, le groupe $\Aut(R)$ des automorphismes du graphe aléatoire et le groupe $\Aut(\Q,<)$ des bijections préservant l'ordre des rationnels
	sont des groupes d'automorphismes de structures dénombrables dont l'action naturelle
	est oligomorphe: ils sont Roelcke-précompacts\footnote{Dans le cadre restreint des groupes d'automorphismes
		 de structures dénombrables,
	le théorème qui précède est dû à \textcite{tsankovUnitaryRepresentationsOligomorphic2012}.}.
	Dans le cadre continu, l'exemple~\ref{ex: approxi oligo pour malg} nous montre que le groupe $\Aut([0,1],\lambda)$
	des transformations préservant la mesure de l'intervalle est Roelcke-précompact \parencite{glasnerGroupAutRoelcke2012}. 
	Le groupe des transformations qui \emph{quasi}-préservent la mesure de l'intervalle et le groupe des 
	transformations préservant la mesure de $\R$ sont également Roelcke-précompacts.
	Le groupe des homéomorphismes de l'intervalle $[0,1]$ préservant son orientation est Roelcke-précompact
	car il contient une copie de $\Aut(\Q,<)$ comme sous-groupe dense.	
 	Une version continue naturelle
	 de $\Aut(R)$ est le groupe $\Iso(\mathbb U_1)$ des isométries de
	l'espace d'Urysohn borné\footnote{On renvoie au survol de \textcite{mellerayGeometricDynamicalProperties2008}
		pour des informations sur les espaces d'Urysohn en général.}, qui
	est également Roelcke-précompact. Les groupes $\mathcal O(\mathcal H)$
	et $\mathcal U(\mathcal H)$ sont Roelcke-précompacts \parencite{uspenskij1998roelcke}, ce qui peut se voir en appliquant 
	la caractérisation (iv) à leurs actions naturelles sur les boules unités
	de leurs espaces de Hilbert respectifs.
\end{exem}

\begin{rema}
    Rappelons que d'après la remarque~\ref{rmq: lc roelcke}, les seuls groupes  
	localement compacts polonais qui sont Roelcke-précompacts sont les groupes compacts polonais.
	La classe des groupes polonais Roelcke-précompacts a de nombreuses propriétés
	de stabilité analogues à celles des groupes compacts (cf.\  \cite[sec.~2.1]{tsankovUnitaryRepresentationsOligomorphic2012}), 
	à l'exception notable de la stabilité par passage à
	un sous-groupe fermé puisque tout groupe polonais est isomorphe à un sous-groupe fermé du groupe $\Iso(\mathbb U_1)$
	des isométries de l'espace d'Urysohn borné \parencite{uspenskijGroupIsometriesUrysohn1990}. 
\end{rema}

Terminons cette section en mentionnant une propriété importante des groupes polonais Roelcke-précompacts
qui fait le lien avec la théorie géométrique des groupes polonais et motive également l'étude des groupes
polonais localement Roelcke-précompacts dans ce cadre \parencite{zielinskiLocallyRoelckePrecompact2018}.

\begin{prop}[{
\cite[Thm.~6.1]{rosendalTopologicalVersionBergman2009}}]\label{prop: ob}
	Soit $G$ un groupe polonais Roelcke-précompact. Alors toute $G$-action continue
	par isométries sur un espace métrique $(X,d)$ a ses orbites bornées.
\end{prop}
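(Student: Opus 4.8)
\emph{Le plan.} Soit $G$ un groupe polonais Roelcke-précompact agissant continûment par isométries sur un espace métrique $(X,d)$, et fixons un point $x\in X$ ; il s'agit de montrer que $\sup_{g\in G}d(g\cdot x,x)<+\infty$. Je poserais $\phi(g)\coloneq d(g\cdot x,x)$ et commencerais par observer que $\phi$ est une \emph{fonction longueur} continue sur $G$ : elle est continue puisque l'action l'est, donc $g\mapsto g\cdot x$ aussi ; elle vérifie $\phi(e)=0$ ; elle est symétrique car $\phi(g\inv)=d(g\inv\cdot x,x)=d(x,g\cdot x)=\phi(g)$, l'action étant isométrique ; et elle est sous-additive, car $\phi(gh)=d(gh\cdot x,x)\leq d(gh\cdot x,g\cdot x)+d(g\cdot x,x)=d(h\cdot x,x)+d(g\cdot x,x)=\phi(h)+\phi(g)$, en utilisant à nouveau que l'action est isométrique.

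Ensuite, j'utiliserais la continuité de $\phi$ en $e$ pour choisir un voisinage ouvert $U$ de l'identité tel que $\phi(u)\leq 1$ pour tout $u\in U$. D'après la caractérisation de la Roelcke-précompacité donnée dans la remarque suivant la définition, il existe une partie finie $F\subseteq G$ telle que $G=UFU$. Tout $g\in G$ s'écrit donc $g=u_1fu_2$ avec $u_1,u_2\in U$ et $f\in F$, d'où par sous-additivité $\phi(g)\leq\phi(u_1)+\phi(f)+\phi(u_2)\leq 2+\max_{f\in F}\phi(f)$. Comme $F$ est fini, la quantité $M\coloneq 2+\max_{f\in F}\phi(f)$ est finie, et l'on obtient $d(g\cdot x,x)\leq M$ pour tout $g\in G$ : l'orbite $G\cdot x$ est contenue dans la boule fermée de centre $x$ et de rayon $M$. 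Comme $x$ était arbitraire, toutes les orbites sont bornées.

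Il n'y a pas vraiment d'obstacle ici : le cœur de l'argument est la décomposition $G=UFU$ fournie par la Roelcke-précompacité, combinée au fait élémentaire que $g\mapsto d(g\cdot x,x)$ est une fonction longueur. Le seul point méritant un peu d'attention est de justifier que l'on peut choisir $U$ de sorte que $\phi$ soit bornée sur $U$, ce qui résulte immédiatement de la continuité de l'action (ou, de façon équivalente, de celle du morphisme $G\to\Iso(X,d)$, $\Iso(X,d)$ étant muni de la topologie de la convergence simple, puisque l'évaluation en $x$ est alors continue).
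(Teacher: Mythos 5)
Votre preuve est correcte, mais elle emprunte un chemin réellement différent de celui du texte. Vous partez de la caractérisation combinatoire directe de la Roelcke-précompacité ($G=UFU$ pour $U$ voisinage de l'identité et $F$ fini, cf.\ la remarque qui suit la définition) et l'appliquez à la fonction longueur $\phi(g)=d(g\cdot x,x)$, dont la continuité, la symétrie et la sous-additivité donnent immédiatement la borne $2+\max_{f\in F}\phi(f)$. Le texte, lui, réutilise la machinerie déjà développée : l'action $G\acts G\cdot x$ est approximativement cocompacte car transitive, donc d'après l'implication (i) $\Rightarrow$ (ii) du théorème de Ben Yaacov--Tsankov l'action diagonale sur $G\cdot x\times G\cdot x$ l'est aussi ; la pseudo-distance quotient $d^2_G$ est alors bornée par un certain $K$, et le calcul de $d^2_G((x,x),(x,g\cdot x))$ combiné à l'inégalité triangulaire donne $d(g\cdot x,x)\leq K$. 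Les deux arguments sont au fond très proches --- la preuve de (i) $\Rightarrow$ (ii) repose elle-même sur la décomposition $G=UFU$ --- mais le vôtre est plus élémentaire et autonome (c'est essentiellement l'argument original de Rosendal pour la propriété (OB)), tandis que celui du texte a l'avantage de ne mobiliser que des énoncés déjà établis sur les actions approximativement cocompactes. Tous les points qui méritaient justification chez vous (continuité de $\phi$ via celle de l'action, symétrie et sous-additivité via le caractère isométrique) sont correctement traités.
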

\begin{proof}
	Soit $x\in X$, alors $G\acts G\cdot x$ est approximativement cocompacte, donc d'après (ii)
	du théorème précédent $G\acts G\cdot x\times G\cdot x$ est approximativement cocompacte. 
	En particulier la pseudo-distance $d^2_G$ est bornée par un certain $K>0$. 
	Prenons $g\in G$, alors en calculant $d^2_G((x,x),(x,g\cdot x))$ on obtient 
	$\inf_{h\in G} d(hg\cdot x,x)+d(h\cdot x,x)\leq K$.
	Comme tout $h\in G$ est une isométrie, on a donc 
	$\inf_{h\in G} d(g\cdot x,h\inv\cdot x)+d(x,h\inv x)\leq K$. Par inégalité triangulaire, on 
	conclut que $d(g\cdot x, x)\leq K$, et l'orbite de $x$ est donc bien bornée.
\end{proof}

\section{Autour de la propriété (T) pour les groupes polonais}\label{sec:(T)}
\subsection{Propriété (T), propriété (FH)}
Si $G$ est un groupe topologique et $\mathcal H$ un espace de Hilbert (réel) non nul, 
une \textbf{représentation orthogonale} 
de $G$ sur $\mathcal H$ est la donnée d'un morphisme de groupes \emph{continu} 
$\pi: G\to \mathcal O(\mathcal H)$,
où $\mathcal O(\mathcal H)$ est le groupe orthogonal de $\mathcal H$, 
muni de la topologie de la convergence simple\footnote{Comme $G$ agit
	par \emph{isométries} sur $\mathcal H$ via $\pi$, la continuité de $\pi$ équivaut à la continuité de l'application d'action $G\times\mathcal H\to\mathcal H$ qui à $(g,\xi)$ associe $\pi(g)\xi$.} (appelée aussi topologie
forte, et qui sur $\mathcal O(\mathcal H)$ coïncide avec la topologie faible).

\begin{defi}
	Soit $\pi:G\to\mathcal O(\mathcal H)$ une représentation orthogonale. Soit $Q\subseteq G$, on dit 
	qu'un vecteur $\xi\in\mathcal H\setminus\{0\}$ est $(Q,\epsilon)$\textbf{-invariant} si 
	\[
	\sup_{g\in Q}\norm{\pi(g)\xi-\xi}<\epsilon\norm{\xi}.
	\]	
	La représentation $\pi$ a des \textbf{vecteurs
		presque invariants} si pour toute partie compacte $K\subseteq G$ et tout $\epsilon>0$, 
	il existe un vecteur 
	$\xi\in\mathcal H\setminus\{0\}$ qui est $(K,\epsilon)$-invariant. 
	Elle a un \textbf{vecteur invariant} s'il existe $\xi\in\mathcal H\setminus\{0\}$ tel que
	$\pi(g)\xi=\xi$ pour tout $g\in G$.
\end{defi}

\begin{defi}
	Un groupe topologique a la \textbf{propriété~(T)} si toute représentation orthogonale de $G$ 
	qui a des vecteurs presque invariants admet en fait un vecteur invariant.
\end{defi}

\begin{rema}
	La propriété~(T) est souvent énoncée en parlant de représentations \emph{unitaires} sur
	un espace de Hilbert \emph{complexe}. Les deux versions sont équivalentes, ce qui se voit en utilisant 
	d'une part le fait que tout espace de Hilbert complexe est un espace de Hilbert réel
	si on le munit de la partie réelle de son produit scalaire, et d'autre part 
	la ``complexification'' d'un espace de Hilbert réel qui permet d'en faire un espace
	de Hilbert complexe (cf.\ la preuve du lemme 11 du chapitre 4 de 
	\cite{delaharpeProprieteKazhdanPour1989}). On a introduit la propriété~(T) dans le cadre
	réel parce que l'algèbre des fonctions définissables que nous fournira la théorie
	des modèles continue est une algèbre réelle.
\end{rema}

\begin{rema}
	Remarquons que si $G$ n'a pas de représentation orthogonale non
	triviale, alors $G$ a la propriété~(T). Ne possédant pas de mesure de Haar, de nombreux
	groupes polonais n'ont en fait aucune représentation orthogonale et ont donc la propriété~(T), 
	comme par exemple le groupe des homéomorphismes préservant l'orientation de l'intervalle $[0,1]$ 
	\parencite{megrelishviliEverySemitopologicalSemigroup2001}. Cependant,
	si $X$ est discret et $G$ agit sur $X$ continûment, alors on a une représentation
	orthogonale associée $G\to \mathcal O(\ell^2(X))$ donnée par: pour tout $g\in G$,
	$\xi\in\ell^2(X)$ et $x\in X$, $(\pi(g)\xi)(x)=\xi(g\inv x)$. En particulier, tous
	les groupes d'automorphismes de structures dénombrables admettent des représentations orthogonales. 
	Notons aussi que $\Aut([0,1],\lambda)$ admet une représentation orthogonale 
	sur $\mathrm L^2([0,1],\lambda)$ définie de manière similaire.
\end{rema}

Voici une caractérisation souvent utilisée	comme définition de la propriété~(T).

\begin{lemm}
	Un groupe topologique $G$ a la propriété~(T) si et seulement s'il existe $K\subseteq G$ compact et 
	$\epsilon>0$ tel que toute représentation orthogonale $\pi$ admettant des vecteurs 
	$(K,\epsilon)$-invariants admet un vecteur invariant. 
\end{lemm}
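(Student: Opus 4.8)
The statement is an "if and only if", so I would prove both directions separately. The forward direction (property (T) implies the existence of such a $K$ and $\epsilon$) is the substantive one; the reverse direction is essentially immediate from the definitions. Let me sketch both.

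**The easy direction.** Suppose there exist a compact $K \subseteq G$ and $\epsilon > 0$ such that every orthogonal representation admitting $(K,\epsilon)$-invariant vectors admits an invariant vector. Let $\pi$ be an arbitrary orthogonal representation with almost invariant vectors. By the very definition of "almost invariant vectors", applied to the compact set $K$ and the number $\epsilon$, the representation $\pi$ admits a $(K,\epsilon)$-invariant vector. Hence by hypothesis $\pi$ has an invariant vector. Since $\pi$ was arbitrary, $G$ has property (T).

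**The hard direction.** Suppose $G$ has property (T), and suppose toward a contradiction that for every compact $K \subseteq G$ and every $\epsilon > 0$ there is an orthogonal representation $\pi_{K,\epsilon}$ admitting a $(K,\epsilon)$-invariant vector but no invariant vector. The idea is to assemble a single "bad" representation from countably many of these. Since $G$ is Polish, hence $\sigma$-compact? — no, that is false in general, so I cannot simply exhaust $G$ by an increasing sequence of compact sets. This is the point that needs care: the standard locally compact argument (take $K_n$ increasing with union $G$) does not apply. Instead I would argue as follows. For each $n \in \N$, the hypothesis gives, for the pair $(K, 1/n)$ ranging over \emph{all} compact $K$, a representation without invariant vectors but with a $(K,1/n)$-invariant vector; form the direct sum $\pi = \bigoplus \pi_{K,1/n}$ over a countable cofinal family — but there is no countable cofinal family of compact subsets of a general Polish group. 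So instead one should fix the \emph{definition} of "has almost invariant vectors" more cleverly: a representation has almost invariant vectors iff for every compact $K$ and every $\epsilon$ it has a $(K,\epsilon)$-invariant vector. Negating property (T): there is a representation $\pi$ with almost invariant vectors but no invariant vector. Negating the right-hand side: for \emph{every} $(K,\epsilon)$ there is a representation $\rho_{K,\epsilon}$ with a $(K,\epsilon)$-invariant vector but no invariant vector. I would take the Hilbert space direct sum $\pi := \bigoplus_{(K,\epsilon)} \rho_{K,\epsilon}$ over a suitable index set; a representation on a (possibly non-separable) Hilbert space still works, or one restricts attention to a countable dense set of compact sets if $G$ is, say, compactly generated — but the cleanest route, and the one I expect the author takes, is to observe that it suffices to consider $K$ ranging over a \emph{countable} basis of compact sets built from a fixed compatible metric: e.g. since property (T) only constrains behaviour on compact sets, and any compact set is contained in some closed ball for a compatible left-invariant metric intersected with... — actually the sharp observation is that a representation $\pi$ fails to have almost invariant vectors iff there is a \emph{single} pair $(K,\epsilon)$ witnessing it, so negating property (T) and negating the RHS and taking a countable direct sum $\bigoplus_n \rho_{K_n,1/n}$ where $(K_n)$ is chosen so that every compact set sits inside some $K_n$ up to the tolerances involved; the resulting $\pi$ then has a $(K_n,1/n)$-invariant vector for each $n$ (take it in the $n$-th summand), hence has almost invariant vectors, yet has no invariant vector since none of the summands does and an invariant vector of a direct sum is a sequence of invariant vectors. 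This contradicts property (T).

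**Where the difficulty lies.** The main obstacle is precisely the passage from "for all compact $K$" to a \emph{countable} family, since a Polish group need not be $\sigma$-compact. I expect the resolution is one of: (a) a direct sum over an uncountable index set, working with a non-separable Hilbert space (harmless, as property (T) is usually stated without separability assumptions on $\mathcal H$); or (b) a reduction showing that it suffices to test $(K,\epsilon)$-invariance on a countable collection of compact sets — for instance, because for any compact $K$ and any $\delta$ there is a \emph{finite} $\delta$-dense subset, and a finite set is compact, so it is enough to let $K$ range over finite subsets of a fixed countable dense subgroup, of which there are countably many. Route (b) is the clean one: replace "compact $K$" throughout by "finite subset of a fixed countable dense set $D \subseteq G$", check via continuity of the representations and density that $(F,\epsilon)$-invariance for all finite $F \subseteq D$ is equivalent to $(K,\epsilon')$-invariance for all compact $K$ (with a loss in $\epsilon$), and then the countable direct sum $\bigoplus_n \rho_{F_n, 1/n}$ over an enumeration of pairs $(F,1/n)$ does the job. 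Everything else is bookkeeping with direct sums of orthogonal representations.
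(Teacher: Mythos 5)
Your route (a) is precisely the paper's proof, and it is complete as you state it: assuming property~(T) and negating the conclusion, one takes the Hilbert direct sum of the representations $\pi_{K,\epsilon}$ over \emph{all} pairs $(K,\epsilon)$ with $K\subseteq G$ compact and $\epsilon>0$. This is an honest (if uncountable) index set, the resulting Hilbert space need not be separable, and nothing in the paper's definition of an orthogonal representation or of property~(T) requires separability, so there is no countability obstacle to overcome in the first place. Each summand $\pi_{K,\epsilon}$ contributes a $(K,\epsilon)$-invariant vector to the sum, so the sum has almost invariant vectors, while an invariant vector of the sum would have all its components invariant and at least one of them non-zero. Together with your (correct and identical) easy direction, this settles the lemma.

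The problem is that the route you end up endorsing as ``the clean one'', route (b), is not merely less clean: it is false, and you should see why. The claimed equivalence between having $(F,\epsilon)$-invariant vectors for every finite $F$ in a countable dense set and having $(K,\epsilon')$-invariant vectors for every compact $K$ fails, because passing from a finite $\delta$-dense subset $F$ of $K$ to all of $K$ requires controlling $\norm{\pi(h)\xi-\xi}$ for $h$ in a $\delta$-neighbourhood of the identity, and the modulus of continuity of $g\mapsto\pi(g)\xi$ depends on $\xi$ and on $\pi$; no uniform ``loss in $\epsilon$'' exists. If route (b) worked, running your own direct-sum argument over the countably many pairs (finite $F\subseteq D$, $1/n$) would show that every topological group with property~(T) admits a Kazhdan pair with $K$ \emph{finite}, i.e.\ has strong property~(T) --- contradicting the paper's remark that the circle, being compact, has property~(T) but not strong property~(T). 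Concretely, the direct sum of all non-trivial characters of the circle has $(F,\epsilon)$-invariant vectors for every finite $F$ and every $\epsilon>0$ (simultaneous Diophantine approximation), yet it has no invariant vector and no $(S^1,\sqrt 2)$-invariant vector. So keep route (a) and discard route (b).
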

\begin{proof}
	Il est clair que la condition donnée par le lemme est suffisante. Montrons qu'elle est 
	nécessaire: supposons que $G$ a la propriété~(T), mais que pour tout $K\subseteq G$
	compact et $\epsilon>0$, $G$ admet une représentation orthogonale $\pi_{K,\epsilon}$ 
	sur un espace de Hilbert $\mathcal H_{K,\epsilon}$ avec un vecteur $(K,\epsilon)$-invariant
	mais sans vecteur invariant. 
	Alors la représentation obtenue comme somme directe de toutes les
	$\pi_{K,\epsilon}$
	a des vecteurs presque invariants, mais n'a pas de vecteurs invariants, contredisant que $G$
	a la propriété~(T). 
\end{proof}

Lorsqu'on dispose d'une partie $K\subseteq G$ et d'un $\epsilon>0$ tels que toute 
représentation ayant des vecteurs $(K,\epsilon)$-invariants admet un vecteur invariant, on dit que 
$(K,\epsilon)$ est une \textbf{paire de Kazhdan} pour $G$. Le lemme précédent nous dit que 
$G$ a la propriété~(T) si et seulement s'il
admet une paire de Kazhdan $(K,\epsilon)$ avec $K$ compact. Pour tout groupe topologique $G$, le théorème
de projection sur un convexe fermé permet de montrer que $(G,\sqrt 2)$ est une paire de Kazhdan, en particulier les groupes compacts 
ont la propriété~(T). Il est intéressant, étant donné un groupe topologique, de déterminer ses paires
de Kazhdan, par exemple pour le groupe des entiers muni de la topologie discrète 
\parencite{badeaRigiditySequencesKazhdan2019} !

Lorque $G$ admet une paire de Kazhdan
$(K,\epsilon)$ avec $K$ \emph{finie}, on dit que $G$ a la \textbf{propriété~(T) forte}. 
On renvoie à l'excellent \textcite{delaharpeProprieteKazhdanPour1989} pour des applications frappantes
de la propriété~(T) dans le cadre des groupes localement compacts.	
\\

Une propriété cousine de la propriété~(T) qui a également été étudiée pour les groupes polonais
est la propriété~(FH)\footnote{F pour fixe, H pour Hilbert.}. 

\begin{defi}
	Un groupe topologique $G$ a la propriété~(FH) si toute les $G$-actions continues affines par 
	isométries sur des espaces de Hilbert réels admettent un point fixe. 
\end{defi}

On peut montrer que la propriété~(T) implique la propriété~(FH) en toute généralité
(cf.\ \cite[Chap. 4]{delaharpeProprieteKazhdanPour1989}).
Pour les groupes localement
compacts $\sigma$-compacts, Delorme et Guichardet ont montré que la propriété~(FH)
équivaut à la propriété~(T) mais ce n'est pas le cas pour les groupes polonais généraux
\parencite{pestovAmenabilityPropertyNonlocally2018}.

\begin{rema}
Un affaiblissement naturel de la propriété~(T) pour les groupes polonais, 
introduit récemment par \textcite[Sec.~2.2]{andoLargeScaleGeometry2020}, implique également la propriété~(FH) 
et lui est en fait équivalent dans de nombreux cas.
\end{rema}

Un des points importants pour montrer la propriété~(FH) pour un groupe topologique~$G$ 
est de voir par le lemme du centre
que $G$~a la propriété~(FH) si et seulement si toute $G$-action continue par isométries affines sur un espace de Hilbert 
a ses orbites bornées (cf.\ \cite[Chap.~4, Lem.~3]{delaharpeProprieteKazhdanPour1989}).
Ainsi, la proposition~\ref{prop: ob} implique que tous les groupes polonais Roelcke-précompacts 
ont la propriété~(FH).

\subsection{Utilisation du groupe libre}\label{sec: construction action}

Si $\Gamma$ est un groupe discret, il est dit \textbf{moyennable}
lorsque sa représentation régulière sur $\ell^2(\Gamma)$ (donnée par $\pi(\gamma)\xi(g)=\xi(\gamma\inv g)$)
admet des vecteurs presque invariants. Un exemple prototypique est le groupe $\Z$,
pour lequel on vérifie qu'étant donnés une partie finie $K\subseteq \Z$ et $\epsilon>0$,
la fonction caractéristique $\chi_{\{0,\dots,n\}}$ est $(K,\epsilon)$-invariante pour $n$ suffisamment grand.

Inversement, le groupe libre à deux générateurs $\mathbb F_2$ n'est pas moyennable 
\parencite{vonneumannZurAllgemeinenTheorie1929}\footnote{Une manière
concrète de s'en rendre compte est de passer par la caractérisation de la moyennabilité
en termes d'ensembles de Følner et de faire un \emph{schéma de Ponzi} sur le groupe libre, 
cf.\ \textcite[Cor.~6.18]{gromovMetricStructuresRiemannian2007}.
}. On dispose donc
d'une partie finie $F\subseteq \mathbb F_2$ et de $\epsilon>0$ tels que la représentation
régulière $\pi: \mathbb F_2\to\mathcal O(\ell^2(\mathbb F_2))$ n'admet aucun vecteur non
nul $(F,\epsilon)$-invariant. Le théorème de Kesten donne une version quantitative de ce fait 
qui fournit les constantes du théorème~\ref{thm:main} \parencite[Thm.~3]{kestenSymmetricRandomWalks1959}. 
La version qu'on en donne est une reformulation qui se prouve
utilisant l'inégalité de Cauchy--Schwarz (cf.\ \cite[pp.~515-516]{bekkaKazhdanPropertyUnitary2003}).

\begin{theo}[Kesten]\label{thm:kesten}
	Soit $\mathbb F_2=\la a,b\ra$ le groupe libre à $2$ générateurs, soit
	$\pi: \mathbb F_2\to\mathcal O(\ell^2(\mathbb F_2))$ sa représentation régulière. 
	Alors $\pi$ n'a pas de vecteurs non nuls $(\{a,b\},\sqrt{2-\sqrt3})$-invariants: pour
	tout $\xi\in\mathcal H$, on a 
	$$\norm{\pi(a)\xi-\xi}\geq \sqrt{2-\sqrt3}\norm\xi\text{ ou }
	\norm{\pi(b)\xi-\xi}\geq \sqrt{2-\sqrt3}\norm\xi.$$
\end{theo}

Remarquons que ce théorème reste vrai pour une représentation qui serait un multiple de la représentation
régulière, c'est-à-dire un produit tensoriel de la représentation régulière avec une représentation triviale.

Bekka a utilisé le théorème de Kesten afin
de montrer entre autres que lorsque $\mathcal H$ est un
espace de Hilbert réel séparable de dimension infinie, le groupe polonais 
Roelcke-précompact $\mathcal O(\mathcal H)$ a la propriété~(T) 
\parencite{bekkaKazhdanPropertyUnitary2003}. 
Sa démonstration s'appuie sur une classification
des représentations unitaires irréductibles de $\mathcal O(\mathcal H)$, et le fait
que toutes ses représentations se décomposent comme somme directe de sous-représentations irréductibles
\footnote{Il est bien connu que toute représentation unitaire d'un groupe localement compact polonais
	se décompose en une \emph{intégrale} directe de sous-représentations irréductibles.
Ce fait est faux pour les groupes polonais
en général, par exemple le groupe polonais abélien des fonctions mesurables à valeur dans 
le cercle n'a pas de représentation unitaire irréductible 
, mais il admet beaucoup de représentations
unitaires, qui ont d'ailleurs été classifiées par 
\textcite{soleckiUnitaryRepresentationsGroups2014}.}
\parencite{olshanskiiUnitaryRepresentationsInfinitedimensional1978}. Mentionnons 
également les travaux de Shalom, qui avait auparavant montré entre autres que le groupe des applications
continues du cercle à valeurs dans $Sl_n(\mathbb C)$ a la propriété~(T) pour tout $n\geq 3$
avec une approche différente
\parencite[Cor.~4]{shalomBoundedGenerationKazhdan1999}.

\textcite{tsankovUnitaryRepresentationsOligomorphic2012}
a prouvé 
que les
représentations unitaires des groupes d'automorphismes de structures dénombrables
se décomposent toutes en somme directe de représentations irréductibles,
et a classifié ces dernières. 
Comme Bekka l'avait fait auparavant pour $\mathcal O(\mathcal H)$,
il a pu ensuite utiliser cette classification
pour identifier une large classe de groupes d'automorphismes de structures dénombrables
ayant la propriété~(T) avec des paires de Kazhdan explicites, s'appuyant sur le théorème de Kesten.
 Il demandait aussi si tout groupe polonais
Roelcke-précompact a la propriété~(T).

\textcite{evansFreeActionsFree2016} ont ensuite étendu son résultat
pour répondre par l'affirmative dans le cadre des groupes polonais 
non archimédiens. 
Le résultat
d'Ibarlucía y répond affirmativement en toute généralité, englobant les résultats
de Bekka et d'Evans--Tsankov.

\begin{theo}[
\cite{ibarluciaInfinitedimensionalPolishGroups2021}]\label{thm:truemain}
	Soit $G$ un groupe polonais Roelcke-précompact. Alors $G$ a la propriété~(T).
\end{theo}

Le résultat d'Ibarlucía est d'autant plus remarquable qu'il ne fait pas appel à une 
classification des représentations unitaires irréductibles, classification qui
est probablement hors de portée pour les groupes polonais Roelcke-précompacts en général. 
Comme expliqué plus haut, on va donner la preuve seulement dans le cas où $G=\Aut([0,1],\lambda)$,
avec à la clé un énoncé plus précis puisqu'on montre en fait la propriété~(T) forte\footnote{
	Il existe 
	des groupes polonais compacts (en particulier, Roelcke-précompacts) qui n'ont pas la propriété~(T) forte, comme 
	le cercle (cf.\ la proposition 5 de \textcite{bekkaKazhdanPropertyUnitary2003} pour un énoncé plus général). Ainsi l'énoncé du théorème~\ref{thm:truemain} est optimal.
} \parencite[Sec. 5]{ibarluciaInfinitedimensionalPolishGroups2021}. 
Notons que les représentations unitaires de ce groupe ont été classifiées par 
\textcite{neretinCategoriesBistochasticMeasures1992}, mais le fait que
tout représentation unitaire se décompose en somme directe d'irréductibles ne semble écrit nulle part.\\

Définissons maintenant les deux éléments $T_1,T_2\in\Aut([0,1],\lambda)$ qui permettront
de montrer que le groupe $\Aut([0,1],\lambda)$ a la propriété~(T) forte, et plus précisément 
qui satisferont que $(\{T_1,T_2,\},\sqrt{2-\sqrt3})$ est une paire de Kazhdan (théorème~\ref{thm:main}). 
Pour cela, comme mentionné à la fin de la section~\ref{sec: malg}, il est plus clair de travailler
dans l'algèbre de mesure de $X=\{0,1\}^{\N\times\mathbb F_2}$
muni de la mesure $(\frac 12 \delta_0+\frac 12 \delta_1)^{\otimes(\N\times\mathbb F_2)}$. 
On définit alors une action préservant la mesure de probabilité
du groupe libre $\mathbb F_2$ sur $(X,\mu)$ qui est un
\emph{décalage de Bernoulli}:
pour $x\in X$, $(n,g)\in\N\times\mathbb F_2$ et $\gamma\in\mathbb F_2$,
\[
(\gamma\cdot x)(n,g)=x(n,\gamma\inv g).
\]
Cette action induit une action de $\mathbb F_2$ par automorphismes sur l'algèbre de mesure de $(X,\mu)$,
et une fois qu'on a choisis une identification entre cette dernière et $\MAlg([0,1],\lambda)$, on obtient
les éléments $T_1$ et $T_2$ du théorème~\ref{thm:main} en prenant les transformations correspondant aux
générateurs $a$ et $b$ du groupe libre $\mathbb F_2=\la a,b\ra$.
On peut désormais ramener 
le théorème~\ref{thm:main} à l'énoncé suivant qui suit la même stratégie que Bekka.

\begin{theo}\label{thm:mainbis}
	Soit $\pi: \Aut([0,1],\lambda)\to\mathcal O(\mathcal H)$ une représentation orthogonale sans 
	vecteurs invariants. Alors $\pi_{\restriction\la T_1,T_2\ra}$
	est un multiple  de la représentation régulière du groupe
	libre engendré par $T_1$ et $T_2$.
\end{theo}
\begin{proof}[Démonstration du théorème~\ref{thm:main} à partir du théorème~\ref{thm:mainbis}] 
	La preuve se fait par contraposition.
	Soit $\pi: \Aut([0,1],\lambda)\to\mathcal O(\mathcal H)$ une représentation
	orthogonale sans vecteurs invariants. D'après le théorème~\ref{thm:mainbis} elle se retreint à $\mathbb F_2=\la T_1,T_2\ra$ 
	en un multiple de sa représentation régulière	
	et le théorème de Kesten empêche alors l'existence de vecteurs 
	$(\{T_1,T_2\},\sqrt{2-\sqrt 3})$-invariants.
\end{proof}

Nous ébaucherons la preuve du résultat ci-dessus après avoir donné
une preuve complète du théorème plus faible qui suit (cf.\ section~\ref{sec:pfautxmu}). Les 
sections~\ref{sec:bases} et~\ref{sec: definissable} prépareront le terrain.

\begin{theo}\label{THM:MAINTER}
	Soit $\pi: \Aut([0,1],\lambda)\to\mathcal O(\mathcal H)$ une représentation orthogonale sans 
	vecteurs invariants. Alors $\pi_{\restriction\la T_1,T_2\ra}$
	\emph{contient} la représentation régulière du groupe
	libre engendré par $T_1$ et $T_2$.
\end{theo}

Terminons cette partie en mentionnant que puisque les groupes polonais n'ont pas 
de mesure de Haar a priori, ils n'ont pas de représentation régulière et la définition
de la moyennabilité qu'on a donné en début de partie n'a pas de sens dans leur cadre. 
Une bonne généralisation existe cependant, et les groupes polonais peuvent de plus exhiber
un phénomène d'\emph{extrême moyennabilité} inexistant dans le cadre localement compact.
On renvoie à l'ouvrage de 
\textcite{pestovDynamicsinfinitedimensionalgroups2006}
pour ces aspects fascinants.

\section{Bases de théorie des modèles continue}\label{sec:bases}

Nous allons maintenant introduire le cadre de la théorie des modèles continue,
qui s'avère être un outil puissant  non seulement 
pour l'étude des groupes d'automorphismes de structures métriques,
mais aussi pour l'étude de ces structures elles-mêmes (cf., par exemple, \cite{farahLogicOperatorAlgebras2014}).
Notre présentation est très partielle puisque notre objectif est d'arriver rapidement aux résultats 
d'Ibarlucía sur la propriété~(T) pour $\Aut([0,1],\lambda)$.
On omet notamment le théorème de Löwenheim--Skolem, la construction d'ultraproduits, le
 théorème de compacité ou encore l'existence de modèles monstrueux
qui sont incontournables pour aller plus loin et comprendre la preuve d'Ibarlucía en général. 
On renvoie pour cela à \textcite{benyaacovModeltheorymetric2008} qui est notre principale source. 
On a suivi un traitement proche de \textcite{farahModeltheorymathrmC2016} et  \textcite{hallbackMetricModelTheory2020} dans la présentation de 
l'algèbre des prédicats définissables.

\subsection{Langages, formules, théories}\label{sec: basics}

Étant donné le groupe des automorphismes d'une structure métrique, on va avoir besoin de parler de sous-structures,
mais aussi de plongements entre des structures non nécessairement liées à notre structure de départ.
Évidemment, on ne peut pas considérer des plongements entre des objets de nature différente, et la notion
de \emph{langage} permet de formaliser cette restriction en fixant une fois pour toute
 des \emph{symboles} de fonctions et 
de relations, qui seront ensuite \emph{interprétés} dans les structures appropriées.
On commence par donner la définition d'un langage dans le cadre discret.

\begin{defi}
	Un \textbf{langage discret} est un couple $\mathcal L=((f_i,n_i)_{i\in I},(R_j,m_j)_{j\in J})$, où
	les $f_i$ et $R_j$ sont des symboles deux à deux distincts, et les $n_i$ et $m_j$ sont des entiers
	représentant leurs arités respectives.
	Les $f_i$ sont appelés \textbf{symboles de fonctions}, les~$R_j$ sont les
	\textbf{symboles de relations}. Quand $f_i$ est d'arité $n_i=0$, on dit aussi que $f_i$ est un
	\textbf{symbole de constante}.
\end{defi}

%
La définition dans le cas métrique est compliquée par le fait que l'on voudra que
les interprétations des symboles du langage soient uniformément continues bornées,
et ce d'une manière qui ne dépende pas de la structure considérée.
Rappelons qu'étant donnés 
deux espaces métriques $(X,d_X)$ et $(Y,d_Y)$ et une application 
$\delta\colon\mathopen]0,+\infty\mathclose[\to \mathopen]0,+\infty\mathclose[$, une application $f:X\to Y$ est  dite uniformément continue
de \textbf{module de continuité uniforme} $\delta$ si pour tout $\epsilon>0$ et tous $x_1,x_2\in X$,
si $d_X(x_1,x_2)<\delta(\epsilon)$ alors $d_Y(f(x_1),f(x_2))<\epsilon$.


Dans la définition qui suit, les distances sur les puissances $M^n$ sont celles de
la convention donnée en début de section~\ref{sec:cara prp}, et on munit $\R$ de sa distance usuelle,
induite par la valeur absolue.

\begin{defi}\label{def: Lstructure}
	Un \textbf{langage métrique} est un langage discret \[\mathcal
          L=((f_i,n_i)_{i\in I}, (R_j,m_j)_{j\in J})\] 
	où on se donne en plus 
	\begin{itemize}
		\item pour chaque $i\in I$ un module de continuité uniforme $\delta_i$;
		\item pour chaque $j\in J$ un module de continuité uniforme $\delta_j$ et une \emph{borne} $D_j>0$;
		\item une borne $D>0$. 
	\end{itemize}	
	Une \textbf{$\mathcal L$-structure}
	est alors un espace métrique \emph{complet} $(M,d^M)$ de diamètre au plus $D$ muni
	des \emph{interprétations} suivantes:
	\begin{itemize}
		\item pour chaque symbole de fonction $f_i$, on a une application $f_i^M: M^{n_i}\to M$ 
		de module de continuité uniforme
		$\delta_i$;
		\item pour chaque symbole de relation $R_j$, on a une application $R_j^M: M^{m_j}\to [0,D_j]$ 
		de module de continuité
		uniforme $\delta_j$.
	\end{itemize}
\end{defi}

Comme dans la définition des structures métriques, 
on autorise les symboles de fonctions $0$-aires qui correspondent alors à 
des éléments de la structure $M$. Le symbole de distance $d$
est également interprété comme $d^M$ dans $M$.

\begin{exem}
	Tout espace métrique complet est une structure dont le langage est l'ensemble vide
	(on parle alors du langage des espaces métriques).
\end{exem}

\begin{exem}
	Les algèbres de mesure telles que définies dans la section~\ref{sec: malg} sont des 
	$\mathcal L$-structures pour le langage $\mathcal L$ qui suit. 
	Tout d'abord, on prendra pour chaque symbole de relation
	ou de fonction le module de continuité uniforme $\delta=\id_{]0,+\infty[}$ (module de continuité uniforme
	des fonctions $1$-lipschitziennes), et la borne $D=1$. 
	Le langage est alors constitué  des symboles  suivants
	dont on précise les interprétations dans une algèbre de mesure 
	$(\MAlg(X,\mu),d_\mu,\emptyset, X,\cup,\cap, \bigtriangleup,\mu)$ :
	\begin{itemize}
		\item deux symboles de fonctions $\mathds{O}$ et $\mathds 1$ d'arité $0$ (constantes), 
		qui seront interprétées comme $\emptyset$ et $X$ ;
		\item trois symboles de fonctions $\vee, \wedge$ et $\fplus$ d'arité $2$ et de module de continuité
		uniforme $\delta$, 
		qui seront interprétées comme $\cup, \cap$ et $\bigtriangleup$ respectivement ;
		\item un symbole de relation $m$ d'arité $1$, de module de continuité uniforme $\delta$, 
		qui sera interprété comme $\mu$.
	\end{itemize}	
Le langage des algèbres de mesure est donc formellement la famille
\[\big(
\left((\mathds O,0),(\mathds1,0),(\vee,2),(\wedge,2),(\fplus,2)\right),
\; \left((m,1))\right)\big),
\]
avec les modules de continuité uniforme tous égaux à $\id_{]0,+\infty[}$ et les bornes toutes égales à $1$.
\end{exem}

On suppose également avoir à disposition un ensemble infini de \textbf{symboles de variables} 
$\{x_1,\ldots,x_k,\ldots\}$ et 
que pour chaque tel symbole de variable $x_k$ on dispose des \textbf{symboles de quantification}
$\inf_{x_k}$ 
et $\sup_{x_k}$.

Un langage métrique $\mathcal L$ étant fixé, 
nous allons maintenant définir les briques de base des \emph{formules}. Formellement,
une formule est une suite finie de symboles, mais qui est destinée à avoir un sens (une interprétation)
 dans les structures que nous considérerons.
 La définition que nous donnons des termes et formules est imprécise afin de 
 ne pas nous noyer dans des notations, et on peut rendre tout cela plus rigoureux,
 de la même manière que l'on peut définir proprement les polynômes sur un corps
 $\mathbb K$ comme étant des suites de support fini à valeurs dans $\mathbb K$.
On renvoie donc le lecteur inquiet à \textcite[Chap.~3]{coriLogiqueMathematiqueTome2003}
pour un traitement complet dans le cadre de la théorie des modèles classique.

\begin{defi}
On définit les $\mathcal L$-\textbf{termes} (et leurs \textbf{ensembles de variables libres})
 par induction de la manière suivante:
\begin{enumerate}
	\item Chaque symbole de variables $x_k$ est un terme dont l'ensemble des variables libres
	est $\{x_k\}$.
	\item Si $f$ est un symbole de fonction d'arité $n$ et $t_1,\ldots,t_n$ sont des termes,
	alors $f(t_1,\ldots,t_n)$ est un terme dont l'ensemble des variables libres est la réunion
	des ensembles de variables libres de $t_1,\ldots, t_n$.
\end{enumerate}
\end{defi}

En particulier pour $n=0$, les symboles de constantes sont des termes dont l'ensemble de variables 
libres est vide.

\begin{exem}
	Dans le langage des algèbres de mesure, $\mathds 1+ x_1$ est un terme, que l'on interprétera
	bientôt comme le passage au complémentaire.  Un autre exemple est donné par $x_1\vee x_2$, 
	qu'on interprétera comme l'opération de réunion.
\end{exem}

\begin{defi}\label{def: formule atomique}
	Une \textbf{formule atomique} est une expression de la forme
	\[
	R(t_1,\ldots,t_m),
	\]
	où $t_1,\ldots,t_m$ sont des $\mathcal L$-termes et $R$ est un symbole de relation d'arité $m$.
	Son ensemble de variables libres est la réunion des ensembles de variables libres des termes
	qui la constituent.
\end{defi}

\begin{exem}
	Dans le langage des espaces métriques, les seules formules atomiques sont de la forme $d(x_i,x_j)$.
	Dans le langage des algèbres de mesure, $m((x_1\vee x_2)\wedge(x_3\vee x_4\vee \mathds O))$
	est une formule atomique.
\end{exem}

Un \textbf{connecteur logique} est une fonction continue $c:\R^n\to \R$. 
Ici encore, on s'autorise $n=0$,
auquel cas $c$ est un réel. On note $\1$ le connecteur logique d'arité nulle
correspondant au réel $1$ et 
$\0$ le connecteur logique correspondant au réel $0$. On notera également
$\dotdiv:\R^2\to \R$ la fonction qui à $(t_1,t_2)\in\R^2$ associe $t_1\dotdiv t_2\coloneq\max(t_1-t_2,0)$.

\begin{defi}\label{df: formules}
L'ensemble des \textbf{formules} (et les ensembles de variables libres associés) 
est défini par induction de la manière suivante:
\begin{enumerate}
	\item Toute formule atomique est une formule;
	\item Si $\varphi_1,\dots,\varphi_n$ sont des formules et 
	$c:\R^n\to \R$ est un connecteur logique alors $c(\varphi_1,\dots,\varphi_n)$ est une formule 
	dont l'ensemble des variables libres est égal à la réunion des ensembles de variables 
	libres de $\varphi_1,\dots,\varphi_n$;
	\item Si $\varphi$ est une formule et $x_k$ est un symbole de variable alors $\inf_{x_k}\varphi$
	et $\sup_{x_k}\varphi$ sont des formules, dont l'ensemble des variables libres
	est celui de $\varphi$ privé de $\{x_k\}$.
\end{enumerate}
\end{defi}

\begin{exem}	
	La formule $\inf_{x_2} m(x_1\wedge  x_2)$ est une formule 
	dans le langage des algèbres de mesure dont la seule variable libre est $x_1$.
\end{exem}

Un \textbf{énoncé} est une formule dont l'ensemble des variables libres est vide.
On notera $\varphi(x_1,\ldots,x_k)$ une formule dont l'ensemble des variables libres est inclus dans 
$\{x_1,\ldots,x_k\}$.

Étant donnés une structure métrique $M$ de signature $\mathcal L$, une formule $\varphi(x_1,\ldots, x_k)$,
et un uplet $(a_1,\ldots,a_k)\in M^k$, on peut enfin définir l'\textbf{interprétation} $\varphi^M$ 
de $\varphi(x_1,\ldots,x_k)$ dans $M$ évaluée en $(a_1,\dots,a_k)$, 
qui est un nombre réel que l'on notera $\varphi^M(a_1,\ldots, a_k)$.
Cette dernière est obtenue en remplaçant $x_i$ par $a_i$ dans
la formule, chaque symbole de fonction ou relation par son interprétation, en interprétant chaque
connecteur logique comme la fonction $\R^n\to\R$ correspondante, et les symboles $\inf_{x_i}$ et 
$\sup_{x_i}$ comme des infimums/supremums portant sur $x_i\in M$.
On obtient alors une fonction $\varphi^M: M^k\to \R$.
Pour une définition formelle de l'interprétation d'une formule, on renvoie à 
\textcite[Def.~3.3]{benyaacovModeltheorymetric2008}, ici on se contente de donner quelques exemples dans le
cadre des algèbres de mesure.

\begin{exem}
	Soit $(M,d^M)$ un espace métrique complet borné. 
	
	Alors $\varphi(x_1,x_2)\coloneq d(x_1,x_2)$
	est une formule à deux variables libres dont l'interprétation dans $M$ est la fonction $d^M$.
	Ainsi l'énoncé
	$$\sup_{x_1}\sup_{x_2}d(x_1,x_2)$$
	s'interprète comme le supremum des $d^M(a_1,a_2)$ où $(a_1,a_2)\in M^2$, c'est-à-dire
	le \emph{diamètre} de $(M,d^M)$.
	Un autre exemple est fourni par la formule $d(x_1,x_2)+\cdots +d(x_{2n+1},x_{2n+2})$ qui s'interprète 
	comme la distance produit 
	sur $M^n$ suivant la convention donnée au début de la section~\ref{sec: prp}.
\end{exem}

\begin{exem}
	Soit $M=\MAlg(X,\mu)$ une algèbre de mesure. La formule 
	$\varphi_1(x_1)\coloneq\inf_{x_2} m(x_1\wedge  x_2)$ s'interprète
	dans $\MAlg(X,\mu)$ comme la fonction $\varphi_1^M:\MAlg(X,\mu)\to \R$ qui à $A\in\MAlg(X,\mu)$
	associe 
	$$\inf_{B\in\MAlg(X,\mu)}\mu(A\cap B).$$
	Notons que $\varphi^M$ est constante égale à $0$, puisque on peut prendre pour $B$ l'ensemble vide.
	On va utiliser des connecteurs logiques pour la rendre plus intéressante. 
	Considérons la formule suivante:
	$$\varphi_2(x_1)\coloneq\inf_{x_2} \abs{m(x_1\wedge x_2)-\frac{m(x_1)}2},$$
	alors $\varphi_2^M(A)=0$ si et seulement si on peut trouver des sous-ensembles de $A$ de mesure arbitrairement
	proche de $\frac{\mu(A)}2$.
	On peut alors donner un énoncé dont l'interprétation est nulle si et seulement si $\MAlg(X,\mu)$ est sans atomes:
	$$\sup_{x_1}\inf_{x_2} \abs{m(x_1\wedge x_2)-\frac{m(x_1)}2}.$$
\end{exem}

Nous avons maintenant à notre disposition, pour chaque $\mathcal L$-structure $M$,
	d'un ensemble de fonctions uniformément continues bornées sur $M$ et ses puissances,
	qui sont les interprétations $\varphi^M$ de formules $\varphi$. 
De plus, en utilisant les bornes et modules de continuité uniforme fournis par la définition~\ref{def: Lstructure} et le fait que la restriction de tout connecteur logique à un compact est
uniformément continue, on peut trouver pour chaque formule 
 $\varphi$ une borne ainsi qu'un module d'uniforme continuité valables
  dans \emph{toutes} les interprétations de $\varphi$ (cf.\ \cite[Thm.~3.5]{benyaacovModeltheorymetric2008}).


Identifions deux formules lorsque leurs interprétations sont les mêmes
dans toute $\mathcal L$-structure (on dit aussi qu'elles sont logiquement équivalentes).
On munit alors l'ensemble~$\mathfrak F^{\mathcal L}$ des formules
 d'une structure de $\R$-algèbre commutative en utilisant les connecteurs
logiques correspondants à la multiplication par un réel, à l'addition et à la multiplication. 
On note $\mathfrak F^{\mathcal L}_n$ la sous-algèbre des formules dont les variables libres sont 
parmi $\{x_1,\dots,x_n\}$. Pour $n=0$, on obtient l'algèbre des \emph{énoncés} $\varphi$ (formules sans
variables libres), qui ont donc une interprétation $\varphi^M\in\R$ dans chaque $\mathcal L$-structure~$M$.

\begin{defi}
La \textbf{théorie} d'une $\mathcal L$-structure $M$ est l'application linéaire qui à chaque énoncé
$\varphi\in \mathfrak F^{\mathcal L}_0$ associe $\varphi^M$.
\end{defi}

Comme une application linéaire est complètement déterminée par son noyau, on appellera en fait théorie
de $M$, notée $\mathrm{Th}(M)$, le noyau de l'application $\varphi\mapsto \varphi^M$.
De manière générale, une \textbf{théorie} est un ensemble $T$ formé d'énoncés. 
On dit que $M$ est un \textbf{modèle} 
d'une théorie $T$ si $T\subseteq \mathrm{Th}(M)$. 
Les théories admettant des modèles sont dites \textbf{consistantes},
et si de plus tous les modèles de $T$ ont la même théorie 
(interprètent identiquement \emph{tous} les énoncés), 
on dit que $T$ est \textbf{complète}.
Dans ce qui suit, on s'intéressera uniquement à des théories complètes. Notons que par définition,
si on se fixe une $\mathcal L$-structure $M$, la théorie $\mathrm{Th}(M)$ est complète. 

\begin{rema}Un enjeu 
important (mais secondaire dans le cadre de cet exposé) est de trouver une \textbf{axiomatisation}
naturelle de $\mathrm{Th}(M)$, c'est-à-dire une théorie complète \emph{la plus petite} 
(et la plus naturelle) possible au sein de $\mathrm{Th}(M)$. 
On pourra par exemple consulter \textcite[Sec.~16]{benyaacovModeltheorymetric2008} pour 
une axiomatisation de la théorie de l'algèbre de mesure $\MAlg([0,1],\lambda)$, qui est en fait 
égale à celle 
de toute algèbre de mesure sans atomes.
\end{rema}

\subsection{Prédicats définissables et espace des types}

Fixons désormais une théorie $T$ complète.
Si $M$ est un modèle de $T$, on peut alors
définir une semi-norme sur l'algèbre des formules en considérant la norme infinie
de leurs interprétations dans $M$:
$$
\norm{\varphi(x_1,\ldots,x_k)}_{M,T}=
\sup_{(a_1,\dots,a_k)\in M^k}\abs{\varphi^M(a_1,\ldots,a_k)}.
$$
Notons au passage l'importance d'avoir dans la définition~\ref{def: Lstructure} des bornes 
sur les symboles de relation afin que cette expression soit bien finie.
Le lemme suivant dit que la  semi-norme obtenue ne dépend pas du modèle de $T$ choisi. 
\begin{lemm}\label{lem: norme independante de M}
	Si $M$ et $N$ sont deux modèles de $T$ et si $\varphi(x_1,\ldots,x_k)$ est une formule,
	alors
	$$\norm{\varphi(x_1,\ldots,x_k)}_{M,T}=\norm{\varphi(x_1,\ldots,x_k)}_{N,T}.$$
\end{lemm}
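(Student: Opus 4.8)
The plan is to reduce the claimed equality of sup-norms to the completeness of $T$, by recognizing the sup-norm of $\varphi$ over a model as the interpretation of a single \emph{sentence} that makes no reference to the chosen model. The point is simply that "taking the supremum over the structure" is an operation internal to the language: it is nothing but a string of $\sup$-quantifiers.

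First I would observe that the absolute value $\abs{\cdot}\colon\R\to\R$ is continuous, hence a legitimate connector in the sense of Section~\ref{sec: basics}, so that $\abs{\varphi(x_1,\ldots,x_k)}$ is again a formula with the same free variables, interpreted in any $\mathcal L$-structure $M$ as the function $(a_1,\ldots,a_k)\mapsto\abs{\varphi^M(a_1,\ldots,a_k)}$. Applying the quantifier symbols $\sup_{x_1},\ldots,\sup_{x_k}$ successively, I then form
\[
\psi\coloneq\sup_{x_1}\cdots\sup_{x_k}\abs{\varphi(x_1,\ldots,x_k)},
\]
which, having no free variable left, is an \emph{énoncé}. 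By the very definition of the interpretation of the symbols $\sup_{x_i}$ (as suprema ranging over $x_i\in M$), a straightforward induction on the number of quantifiers gives, for every model $M$ of $T$,
\[
\psi^M=\sup_{(a_1,\ldots,a_k)\in M^k}\abs{\varphi^M(a_1,\ldots,a_k)}=\norm{\varphi(x_1,\ldots,x_k)}_{M,T},
\]
the supremum being finite because $\varphi$ admits a bound valid in all its interpretations.

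It then remains only to invoke completeness: since $T$ is complete, all of its models interpret every sentence identically, so in particular $\psi^M=\psi^N$. Combining this with the previous display applied to $M$ and to $N$ yields
\[
\norm{\varphi(x_1,\ldots,x_k)}_{M,T}=\psi^M=\psi^N=\norm{\varphi(x_1,\ldots,x_k)}_{N,T},
\]
as desired.

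As for the main difficulty: there is essentially none of real substance — the whole content of the lemma is the remark that the sup-norm is the value of an explicit sentence. The only points deserving a word of care are that $\abs{\cdot}$ qualifies as a connector and that the displayed $\psi$ is genuinely a sentence (all its variables are bound), both of which are immediate from the definitions of terms, connectors and quantifiers given in Section~\ref{sec: basics}; and the harmless bookkeeping that the suprema involved are finite, which is exactly why bounds were imposed on relation symbols in Definition~\ref{def: Lstructure}.
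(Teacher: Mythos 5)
Votre preuve est correcte et suit exactement la démarche du texte : on considère l'énoncé $\sup_{x_1}\cdots\sup_{x_k}\abs{\varphi(x_1,\ldots,x_k)}$, dont l'interprétation dans chaque modèle est la semi-norme considérée, et on conclut par complétude de $T$. Les précisions sur le fait que $\abs{\cdot}$ est un connecteur et sur la finitude des supremums sont bienvenues mais ne changent rien à l'argument.
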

\begin{proof}
	Considérons l'énoncé $\sup_{x_1}\cdots\sup_{x_k} \abs{\varphi(x_1,\ldots,x_k)}$. 
	Par définition, son interprétation dans $M$ (resp.\ $N$) est égale à
	$\norm{\varphi(x_1,\ldots,x_k)}_{M,T}$ (resp.\ $\norm{\varphi(x_1,\ldots,x_k)}_{N,T}$),
	or comme $M$ et $N$ sont de modèles de $T$ complète ces interprétations doivent être égales.
\end{proof}

On peut donc définir sans ambiguité une semi-norme $\norm{\cdot}_T$ sur l'algèbre des formules
en posant $\norm{\varphi(x_1,\ldots,x_k)}_T=\norm{\varphi(x_1,\ldots,x_k)}_{M,T}$ où $M$ est un
modèle de $T$.

Autrement dit, si $T$ est une théorie complète, on vient 
de munir $\mathfrak F^{\mathcal L}$ d'une semi-norme de sorte que l'application qui à 
$\varphi(x_1,\ldots,x_n)\in\mathfrak F^{\mathcal L}_n$ 
associe son interprétation sur $M^n$ soit un plongement isométrique\footnote{Comme on a seulement 
	une semi-norme sur l'espace de départ, ce n'est pas une injection donc le terme de
	plongement est peut-être abusif, mais on va tout de suite quotienter par l'idéal
	des éléments de semi-norme nulle de manière à avoir une vraie norme, et donc un vrai plongement
	par passage au quotient.}
$(\mathfrak F^{\mathcal L}_n,\norm\cdot_T)\to (UCB(M^n),\norm{\cdot}_\infty)$ (où $UCB(M^n)$ est
l'algèbre des fonctions uniformément continues bornées sur $M$), et ce quel que soit le
modèle $M$ de $T$.

\begin{rema}\label{rmk:unif continuite}
	Si $\varphi$ est une formule, on peut montrer que son module de continuité uniforme est 
	également encodé dans la théorie~$T$ complète. Dans le cas où $\varphi(x_1)$ n'a qu'une 
	variable libre on procède ainsi: à $\epsilon,\delta>0$ fixé, dire que dans un modèle~$M$ de~$T$, 
	on a pour tous $(a_1,a_2)\in M^2$ l'implication
	$d(a_1,a_2)\leq \delta\Rightarrow\abs{\varphi^M(a_1)-\varphi^M(a_2)}\leq \epsilon$ équivaut à dire 
	que $M$~satisfait l'énoncé suivant, où l'on rappelle que le connecteur logique binaire $\dotdiv$ est défini
	par $t_1\dotdiv t_2=\max(t_1-t_2,0)$:
	\[
	\sup_{x_1,x_2} \min\big(\delta\dotdiv d(x_1,x_2), \abs{\varphi(x_1)-\varphi(x_2)}\dotdiv\epsilon\big),
	\]
	ce qui permet de conclure par complétude de $T$. On aurait donc pu se passer des modules d'uniforme
	continuité dans la définition~\ref{def: Lstructure}. Cependant, cela poserait problème 
	au sein d'une théorie incomplète 
	(comme par exemple dans \cite{farahModeltheorymathrmC2016})
	puisque l'absence de module d'uniforme continuité commun à toutes les structures 
	empêche la construction
	d'ultraproduits raisonnables.
	On a donc préféré donner
	la définition générale.
\end{rema}

\begin{rema}
	De même, la théorie encode quelles formules sont à valeurs positives.
	Par exemple, une formule $\varphi(x_1)$ est à valeurs positives si et seulement si l'énoncé suivant est dans $T$:
	 \(\inf_{x_1}\min(\varphi(x_1),\0)\). 
\end{rema}

\begin{defi}
	Étant donnée une théorie $T$ complète, l'algèbre
	$\mathfrak P^{\mathcal L,T}$
	 des \textbf{prédicats définissables}
	est la $\R$-algèbre de Banach obtenue par séparation puis complétion
	de $(\mathfrak F^{\mathcal L},\norm{\cdot}_T)$.
\end{defi}

\begin{rema}
En toute rigueur, on devrait parler de l'algèbre des prédicats $T$-définissables puisque la notion
dépend de la théorie complète $T$ sous-jacente.
\end{rema}

\begin{conv}
	Dans ce qui suit, on aura affaire à des uplets finis et infinis (mais dénombrables),
	et on dira qu'on a des $n$-uplets pour un $n\leq\omega$ ; 
	plus précisément $n<\omega$ veut dire que $n\in\N$,
	et $n=\omega$ veut dire que l'on travaille sur des suites  indexées par $\N$.
\end{conv}

Pour $n\leq \omega$, définissons $\mathfrak P_n^{\mathcal L,T}$ comme l'adhérence dans
$\mathfrak P^{\mathcal L,T}$ de $\mathfrak F^{\mathcal L}_n$. 
C'est par définition la sous-algèbre des prédicats 
définissables dont les variables libres sont parmi $\{x_1,\ldots, x_n\}$, pour $n=\omega$ on
a $\mathfrak P_\omega^{\mathcal L,T}=\mathfrak P^{\mathcal L,T}$.
Notons qu'il existe des prédicats définissables \emph{avec une infinité de variables libres},
par exemple $\sum_{n\geq 0} 2^{-n}d(x_{2n+1},x_{2n+2})$ (que l'on définit formellement 
comme étant la limite de suite de Cauchy des formules données par les sommes partielles). Un prédicat
définissable sans variable libre (cas $n=0$) sera appelé un \textbf{énoncé}, ce qui étend naturellement
la définition donnée en section~\ref{sec: basics}.

On a quotienté $\mathfrak F^{\mathcal L}$ par l'idéal $\mathfrak I$ 
des formules dont les
interprétations sont toujours nulles, ainsi les éléments de la pré-algèbre de Banach 
$\mathfrak F^{\mathcal L}_n$ ont toujours une interprétation bien définie dans chaque modèle de $T$, 
qui définit une application uniformément continue et bornée sur $M^n$.
Comme tout élément de $\mathfrak P^{\mathcal L}_n$ est limite uniforme d'éléments de 
$\mathfrak F^{\mathcal L}_n/\mathfrak I$, tout prédicat définissable $\varphi\in\mathfrak P^{\mathcal L}_n$ a 
une interprétation bien définie $\varphi^M$ dans
tout modèle~$M$ de~$T$, qui est une fonction uniformément continue sur $M^n$.

Ceci nous permet en particulier de voir que si $M$ est un modèle de $T$, et $n<\omega$
chaque élément $(a_1,\ldots, a_n)\in M^n$ définit un morphisme de $\R$-algèbre non nul
noté $\tp(a_1,\dots,a_n): \mathfrak P^{\mathcal L}_n\to \R$, défini par 
$$\tp(a_1,\dots,a_n)(\varphi)=\varphi^M(a_1,\ldots, a_n).$$
De même pour $n=\omega$ chaque suite $(a_i)_{i\in\N}$ définit un morphisme 
$\mathfrak P^{\mathcal L}_\omega\to \R$ donné par
$$\tp((a_i)_{i\in\N})(\varphi)=\varphi^M((a_i)_{i\in\N}).$$

\begin{defi}
	Soit $T$ une théorie complète et $n\leq\omega$. L'espace des \textbf{$n$-types} sur~$T$ 
	est l'espace des morphismes
	multiplicatifs non nuls $p:\mathfrak P^{\mathcal L}_n\to \R$ continus de norme~\mbox{$\leq 1$}.
\end{defi}

Notons que par densité et continuité, tout type est complètement déterminé par sa restriction aux formules.

\begin{rema}
	En fait tout morphisme $\mathfrak P^{\mathcal L,T}_n\to \R$ 
	est automatiquement continu de norme $\leq 1$ car il préserve l'ordre naturel sur
	$\mathfrak P^{\mathcal L,T}_n$ et envoie $\1$ sur $1$, et tout prédicat définissable $\varphi$
	satisfait l'inégalité $\varphi\leq \norm{\varphi}_T\1$.
\end{rema}

On munit l'espace des types de la topologie faible-*, ce qui en fait un espace topologique compact
(cette topologie est appelée la \textbf{topologie logique} dans \cite{benyaacovModeltheorymetric2008}).
Étant donné un type $p$, on notera souvent $\varphi(p)$ l'élément $p(\varphi)$ afin d'avoir une 
notation cohérente avec la définition du type d'un élément de $M^n$.

\begin{defi} Pour un type $p\in S_n(T)$, un modèle $M$ de $T$ et un uplet $\bar a\in M^n$,
	on dit que $\bar a$ \textbf{réalise} le type $p$ lorsque $\tp(\bar a)=p$. 
\end{defi}
\begin{rema}
	Le théorème de compacité garantit que pour tout $n$-type $p\in S_n(T)$, 
	on peut trouver un modèle de $T$ dans lequel
	il existe une réalisation de $p$.
\end{rema}

Comme pour les théories, on identifiera 
 un type et son noyau, donc on dira qu'une formule \emph{appartient à} $p$ ou \emph{est dans} $p$ 
si elle appartient à son noyau.

\begin{exem}
	Dans l'algèbre de mesure $\MAlg([0,1],\lambda)$, si $\varphi(x_1,x_2)\coloneq m(x_1\wedge x_2)$,
	et $p=\tp([0,\frac 13],[\frac 14,1])$, alors $p(\varphi)=\mu([\frac 14,\frac 13])=\frac 1{12}$.
	Ainsi la formule $\varphi-\frac 1{12}$ appartient à~$p$.
\end{exem}

\begin{rema}\label{rmq: base topo logique}
	Il est utile de noter qu'une \emph{base} de la topologie logique est donnnée par les ouverts
	de la forme 
	$U_{\varphi,\epsilon}\coloneq\{p: \abs{\varphi(p)}<\epsilon\}$, 
	puisque une intersection finie d'ouverts de la forme $U_{\varphi_i,\epsilon_i}$
	contient l'ouvert $U_{\max_i\varphi_i,\min_i\epsilon_i}$.
\end{rema}

Chaque prédicat définissable $\varphi\in \mathfrak P^{\mathcal L,T}_n$ définit une fonction continue 
$\widehat \varphi$ sur l'espace des types $S_n(T)$
par $\widehat \varphi(p)=p(\varphi)$.
La première partie du théorème qui suit est un cas très particulier d'un théorème de Arens 
(le corollaire du théorème 6 dans
\cite{arensRepresentationAlgebras1947}).

\begin{theo}[Dualité de Gelfand]\label{thm:dualité}
	Pour tout $n\leq\omega$, l'application $\varphi\mapsto \widehat \varphi$ est un isomorphisme de 
	$\R$-algèbres de Banach entre 
	$\mathfrak P^{\mathcal L,T}_n$
	et l'espace $\mathcal C(S_n(T),\R)$ des fonctions continues sur $S_n(T)$ à valeurs réelles.
	De plus, si $n<\omega$, pour tout modèle $M$ de $T$, l'application 
	$(a_1,\dots,a_n)\in M^n\mapsto \tp(a_1,\dots,a_n)$
	est uniformément continue d'image dense dans $S_n(T)$.
	De même l'application $(a_i)_{i\in\N}\in M^\omega\mapsto \tp((a_i)_{i\in\N})$ est uniformément continue
	d'image dense dans $S_n(T)$.
\end{theo}
\begin{proof}
	Supposons $n<\omega$.
	L'application $\varphi\mapsto \widehat \varphi$ est une contraction puisque les éléments de $S_n(T)$ 
	sont de norme $\leq 1$.
	Elle est en fait isométrique puisque si
	$\varphi$ est une formule, 
	\[
	\norm{\varphi}_T=\sup_{(a_1,\dots,a_n)\in M^n}\abs{\varphi^M(a_1,\dots,a_n)}
	=\sup_{(a_1,\dots,a_n)\in M^n}\widehat\varphi(\tp(a_1,\dots,a_n))\leq\norm{\widehat\varphi}.
	\]
	La surjectivité de $\varphi\mapsto \widehat \varphi$ découle du théorème de Stone--Weierstrass, puisque 
	par définition l'algèbre formée par les $\widehat \varphi$ sépare les points de 
	l'espace compact $S_n(T)$.
	
	Enfin, le fait que l'application  $(a_1,\dots,a_n)\in M^n\mapsto \tp(a_1,\dots,a_n)$
	soit d'image dense provient du fait que toute fonction continue $\widehat \varphi\in \mathcal C(S_n(T),\R)$ est complètement
	déterminée par sa restriction à $\tp(M^n)$, qui n'est d'autre que la restriction de $\varphi$ à $M^n$, 
	et ce d'après le lemme~\ref{lem: norme independante de M}. Son uniforme continuité
	découle du fait que chaque $\varphi^M$ est uniformément continu.
	Pour $n=\omega$ la preuve est identique, il faut juste adapter la notation.
\end{proof}

\begin{rema}\label{rmq: identification prédicats}
	Pour $n\leq \omega$, définissons $UC(M^n)$ comme l'algèbre des fonctions uniformément
	continues bornées $M^n\to\R$, muni de la norme $\norm{\cdot}_{\infty}$.
	La fin de la preuve utilise que pour $n\leq \omega$, 
	l'application $\mathfrak P_n^{\mathcal L,T}\to UC(M^n)$ qui associe à un
	prédicat définissable son interprétation dans $M$ modèle de $T$ est un une plongement isométrique
	(ce qui découle directement de  la définition de $\norm\cdot_T$ et du 
	lemme~\ref{lem: norme independante de M}). 
	\emph{Il pourra ainsi être plus 
	commode de voir l'algèbre des prédicats définissables comme une algèbre de fonctions sur $M^\omega$
	où $M$ est un modèle fixé de $T$, ce que l'on s'autorisera à faire implicitement par la suite.}
\end{rema}

\begin{rema}
	Pour toute partie compacte $K$ de $\R^n$, 
	les fonctions polynomiales à coefficients rationnels sont denses dans 
	l'espace des fonctions continues sur $K$ à valeurs réelles.
	Il en découle que dans la définition des connecteurs logiques, 
	on aurait pu se 
	contenter de prendre pour connecteurs les 
	fonctions $\0$ et $\1$ (connecteur $0$-aire), 
	les multiplications par des scalaires $q\in\Q$ (connnecteurs unaires)
	la multiplication et l'addition sur les réels (connecteurs binaires),
	et les projections $\R^n\to\R$ (connecteurs $n$-aires).
	On obtient ainsi naturellement, sous l'hypothèse que $\mathcal L$ soit dénombrable,
	 une $\Q$-algèbre dénombrable de formules qui nous donne, après séparation/complétion,
	 la même $\R$-algèbre de prédicats définissables.
	 En particulier, sous l'hypothèse que $\mathcal L$ est dénombrable, $\mathfrak{P}^{\mathcal L, T}$
	 est une algèbre de Banach séparable, et donc pour tout $n\leq \omega$, l'espace $S_n(T)$
	 est compact \emph{métrisable}.
\end{rema}

Terminons cette section en étendant un certain nombre d'opérations sur les formules 
aux prédicats définissables. 
Tout d'abord, les quantificateurs $\sup_{x_k}$ et $\inf_{x_k}$ induisent
chacun une contraction sur l'espace 
des formules $(\mathfrak F^{\mathcal L},\norm{\cdot}_T)$. Ces dernières s'étendent donc de manière unique 
en des contractions
sur $\mathfrak P^{\mathcal L,T}$. En particulier si $\varphi(x_1,\ldots,x_k,\ldots)$ est un prédicat
définissable, $\inf_{x_k} \varphi{{}}(x_1,\ldots,x_k,\ldots)$ l'est aussi,
et son interprétation dans un modèle $M$ de $T$
satisfera, pour tous $(a_i)_{i\in\omega}\in M^\omega$:
\[
\big(\inf_{x_k}\varphi(x_1,\ldots,x_k,\ldots)\big)^M(a_1,\ldots,a_{k-1},a_{k+1},\ldots)=
\inf_{a_k\in M} \varphi^M(a_1,\ldots,a_k,\ldots).
\]
On peut également prendre un infimum de n'importe quel prédicat définissable  
sur \emph{un sous-ensemble infini de l'ensemble des variables}
puisqu'une telle opération nous définit une contraction de norme $1$ sur l'ensemble des formules
$(\mathfrak F^{\mathcal L},\norm{\cdot}_T)$, et de même pour le supremum. 
Si $(x_{i_1},\ldots,x_{i_k},\ldots)$ est l'uplet
(possiblement infini) des variables sur lesquelles on prend l'infimum et $\varphi(x_1,\ldots,x_k,\ldots)$ 
est un prédicat définissable, on note simplement
\[
\inf_{(x_{i_1},\ldots,x_{i_k},\ldots)}\varphi(x_1,\ldots,x_k,\ldots)
\]
le prédicat définissable obtenu. Si on avait pris l'infimum sur toutes les variables, l'interprétation
sera un
nombre réel, qui correspond au minimum du prédicat définissable sur l'espace des types (par densité
de $M^\omega$ dans $S_{\omega}(T))$.

Enfin, la composition par un connecteur logique $c:\R^n\to\R$ 
nous définit une application uniformément continue 
$(\mathfrak F^{\mathcal L})^n\to\mathfrak F^{\mathcal L}$, et donc 
si $\varphi_1,\ldots,\varphi_n$ sont des prédicats définissable, alors $c(\varphi_1,\ldots,\varphi_n)$
l'est aussi et s'interprète dans chaque modèle $M$ comme $c\circ (\varphi_1^M,\ldots,\varphi_n^M)$.

\subsection{Formules sans quantificateur, plongements élémentaires}

Remettons nous dans le cadre général, sans théorie $T$ sous-jacente pour le moment mais avec
un langage $\mathcal L$ fixé.
On a déjà défini 
dans la section~\ref{sec: groupes d'auto metrique} ce que l'on entend par un plongement
d'une $\mathcal L$-structure vers elle même, et on définit de même un \textbf{plongement}
$\rho: M\to N$, où $M$ et $N$ sont deux $\mathcal L$-structures, comme une isométrie telle que
pour tout $n\in\N$:
\begin{itemize}
	\item pour tout symbole de relation $n$-aire $R$, tout $(a_1,\dots,a_n)\in M^n$,
	on a $R^M(a_1,\dots,a_n)=R^N(\rho(a_1),\dots,\rho(a_n))$, et 
	\item pour tout symbole de fonction $n$-aire $f$, tout $(a_1,\dots,a_n)\in M^n$, on a
	$f^M(a_1,\dots,a_n)=f^N(\rho(a_1),\dots,\rho(a_n))$.
\end{itemize}

Autrement dit, un plongement commute aux interprétations
de symboles de relations et de fonctions, ce qui implique qu'il commute aux interprétations
des formules atomiques. 

Plus généralement,
une \textbf{formule sans quantificateur} est une formule obtenue en appliquant des connecteurs logiques
à des formules atomiques (autrement dit, par rapport aux formules en général, on s'interdit d'utiliser la
règle 3 de la définition~\ref{df: formules}). Les formules sans quantificateur forment une sous-algèbre
de l'algèbre des formules, et une preuve par induction immédiate montre le lemme suivant.

\begin{lemm}
	Soient $M$ et $N$ deux $\mathcal L$-structures, soit $\rho: M\to N$ un plongement. 
	Alors pour toute formule sans quantificateur $\varphi(x_1,\dots,x_n)$ et tout $(a_1,\dots,a_n)\in M^n$
	on a $\varphi^M(a_1,\dots,a_n)=\varphi^N(\rho(a_1),\dots,\rho(a_n))$. \qed.
\end{lemm}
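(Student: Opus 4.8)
The plan is a routine induction on the construction of quantifier-free formulas, preceded by the analogous statement for $\mathcal{L}$-terms. Throughout, write $\bar a$ for a tuple $(a_1,\dots,a_n)\in M^n$ and $\rho(\bar a)$ for $(\rho(a_1),\dots,\rho(a_n))$.

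First I would establish, by induction on terms, that $\rho(t^M(\bar a))=t^N(\rho(\bar a))$ for every $\mathcal{L}$-term $t(x_1,\dots,x_n)$ and every $\bar a\in M^n$. If $t$ is a variable $x_k$, both sides equal $\rho(a_k)$. If $t=f(t_1,\dots,t_k)$ for a function symbol $f$ of arity $k$, the induction hypothesis gives $\rho(t_i^M(\bar a))=t_i^N(\rho(\bar a))$ for each $i$, and since $\rho$ commutes with the interpretation of $f$ we obtain
\[
\rho\bigl(f^M(t_1^M(\bar a),\dots,t_k^M(\bar a))\bigr)=f^N\bigl(\rho(t_1^M(\bar a)),\dots,\rho(t_k^M(\bar a))\bigr)=f^N\bigl(t_1^N(\rho(\bar a)),\dots,t_k^N(\rho(\bar a))\bigr).
\]
The case $k=0$ covers constant symbols.

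The base case of the main induction is then immediate: for an atomic formula $\varphi=R(t_1,\dots,t_m)$ with $R$ a relation symbol of arity $m$, combining the term identity just proved with the fact that $\rho$ commutes with the interpretation of $R$ gives $\varphi^M(\bar a)=R^M(t_1^M(\bar a),\dots,t_m^M(\bar a))=R^N(t_1^N(\rho(\bar a)),\dots,t_m^N(\rho(\bar a)))=\varphi^N(\rho(\bar a))$; in particular the distance is preserved since $\rho$ is an isometry. For the inductive step, a non-atomic quantifier-free formula has the shape $c(\varphi_1,\dots,\varphi_k)$ for a logical connector $c\colon\R^k\to\R$ and quantifier-free formulas $\varphi_1,\dots,\varphi_k$ (rule~3 of Definition~\ref{df: formules} being excluded), and since the interpretation of $c(\varphi_1,\dots,\varphi_k)$ in any structure is obtained by post-composing $(\varphi_1,\dots,\varphi_k)$ with $c$, the induction hypothesis yields $\varphi^M(\bar a)=c(\varphi_1^M(\bar a),\dots,\varphi_k^M(\bar a))=c(\varphi_1^N(\rho(\bar a)),\dots,\varphi_k^N(\rho(\bar a)))=\varphi^N(\rho(\bar a))$.

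I do not expect any genuine obstacle; the statement is purely structural. The only point that must not be skipped is the preliminary induction on terms, since atomic formulas are built from terms rather than directly from variables, and it is precisely there — through the function-symbol clause — that the definition of an embedding is used, together with the isometry condition for the distance symbol.
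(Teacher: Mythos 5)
Votre preuve est correcte et suit exactement la voie que l'article se contente d'indiquer (\og{}une preuve par induction immédiate\fg{}, le lemme étant laissé sans démonstration détaillée) : induction préliminaire sur les termes, puis cas atomique utilisant la commutation aux symboles de relation et l'isométrie, enfin le cas des connecteurs logiques. Rien à redire, le point que vous soulignez — ne pas sauter l'induction sur les termes — est bien le seul endroit où la définition de plongement intervient.
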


Il n'est par contre pas vrai qu'un plongement commute aux formules en général, même entre deux 
structures partageant la même théorie. C'est d'ailleurs une des raisons
de la richesse de la théorie des modèles. 

\begin{exem}Considérons par exemple le groupe $(\Z,+)$ vu
	comme structure métrique pour la distance discrète $\delta$ et la formule
	$$\varphi(y)=\inf_x \delta(y,x+x).$$
	Cette formule est satisfaite par $a\in \Z$ (i.e. $\varphi^\Z(a)=0$) si et seulement si $a$ est divisible par $2$. 
	Alors si $m_2:\Z\to \Z$ est le 
	plongement $\Z\to \Z$ donné par la multiplication par $2$, on a que 
	$\varphi^\Z(1)=1$ mais $\varphi^\Z(m_2(1))=0$, et ce plongement 
	n'est donc pas élémentaire au sens suivant.
\end{exem}
\begin{defi}
	Un plongement $\rho: M\to N$ est dit \textbf{élémentaire} s'il commute aux interprétations des formules: 
	pour toute formule $\varphi(x_1,\dots,x_n)$, 	
	tout $(a_1,\dots,a_n)\in M^n$
	on a $\varphi^M(a_1,\dots,a_n)=\varphi^N(\rho(a_1),\dots,\rho(a_n))$.
\end{defi}

Une \textbf{sous-structure} d'une structure $M$ est un sous-ensemble fermé $N$ de $M$ qui est 
stable par les interprétations des symboles de fonctions, de sorte
à être elle-même une structure pour les interprétations des symboles de $\mathcal L$
restreints à $N$. 
L'image d'un plongement est toujours une sous-structure. Par définition une sous-structure $N$ 
de $M$ est \textbf{élémentaire} si l'inclusion de $N$ dans $M$ est un plongement élémentaire: pour 
toute formule $\varphi(x_1,\dots,x_n)$, on a 
$\varphi^M(a_1,\dots, a_n)=\varphi^N(a_1,\dots, a_n)$.
En particulier si $\varphi$ est un énoncé alors $\varphi^M=\varphi^N$ donc $\Th(N)=\Th(M)$.

L'exemple précédent se reformule alors en disant que $2\Z$ n'est pas une sous-structure élémentaire
du groupe $(\Z,+)$ car $\varphi^\Z(2)\neq\varphi^{2\Z}(2)$. 
On pourrait résumer grossièrement ce fait en disant que $2\Z$ n'a pas assez d'éléments pour voir les
même infimums que $\Z$.
Par contre, tout \emph{isomorphisme} (plongement surjectif) entre deux structures est élémentaire
puisqu'il établit une bijection entre les ensembles sur lesquels les infimums et supremums portent. 

\begin{rema}
	Lorsque $M$ est une sous-structure élémentaire de $N$, on dit aussi que $N$ est une 
	extension élémentaire de $M$. La construction d'extensions élémentaires est fondamentale 
	en théorie des modèles et fait souvent
	appel au théorème de compacité.  Elle apparaît d'ailleurs de manière cruciale dans
	la preuve du théorème d'Ibarlucía dans sa version générale (voir
	la section 3 d'\cite{ibarluciaInfinitedimensionalPolishGroups2021}).
\end{rema}

\begin{rema}
	Soit $G$ le groupe des automorphismes d'une structure métrique~$M$. Alors tout élément 
	de $G$ définit un isomorphisme de $M$ vers elle-même, donc est élémentaire. On verra plus
	tard (et c'est la raison principale pour laquelle on a besoin des plongements élémentaires)
	que dans les cas qui nous intéressent, l'adhérence de $G$ dans $M^M$ sera égale au 
	semi-groupe des plongements élémentaires de $M$ dans elle-même.
\end{rema}

En combinant l'observation que tout automorphisme est élémentaire 
avec le théorème~\ref{thm:dualité}, on obtient le résultat suivant qui fait
le lien avec la section~\ref{sec: prp}.

\begin{prop}\label{prop: quotient type}
	Soit $T$ une théorie complète. Soit $M$ un modèle de $T$ et $n\leq \omega$.
	L'application $\tp: M^n\to S_n(T)$ passe au quotient en une application
	uniformément continue
	$\Aut(M)\bbslash M^n\to S_n(T)$. \qed
\end{prop}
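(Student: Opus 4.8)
Le plan est de déduire la proposition de la dualité de Gelfand (théorème~\ref{thm:dualité}) et du fait, déjà observé, que tout automorphisme de $M$ est un plongement élémentaire. Concrètement, on montrerait successivement que l'application $\tp\colon M^n\to S_n(T)$ est $\Aut(M)$-invariante, qu'elle est uniformément continue, puis on conclurait grâce à l'observation faite en section~\ref{sec:oligo} selon laquelle une fonction uniformément continue et $G$-invariante sur un espace métrique sur lequel $G$ agit par isométries passe au quotient en une fonction uniformément continue sur l'espace des adhérences d'orbites.

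Pour l'invariance, on fixe $g\in\Aut(M)$ : comme $g$ réalise un isomorphisme de $M$ sur elle-même, c'est un plongement élémentaire, de sorte que pour toute formule $\varphi(x_1,\dots,x_n)$ et tout $\bar a=(a_1,\dots,a_n)\in M^n$ on a $\varphi^M(a_1,\dots,a_n)=\varphi^M(g\cdot a_1,\dots,g\cdot a_n)$. Puisqu'un type est entièrement déterminé par sa restriction aux formules, ceci donne $\tp(g\cdot\bar a)=\tp(\bar a)$, et le cas $n=\omega$ est identique. L'action diagonale de $\Aut(M)$ sur $M^n$ (sur $M^\omega$ si $n=\omega$) étant par isométries pour la distance $d^n$ (resp.\ $d^\omega$), et $\tp$ étant uniformément continue d'après le théorème~\ref{thm:dualité}, le principe ci-dessus fournit bien une application uniformément continue $\Aut(M)\bbslash M^n\to S_n(T)$.

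Le seul point méritant un mot est que l'application obtenue est bien définie sur les adhérences d'orbites et pas seulement sur les orbites, ce qui est précisément ce que l'invariance combinée à la continuité donne : si $\overline{\Aut(M)\cdot\bar a}=\overline{\Aut(M)\cdot\bar b}$, alors $\bar b$ est limite d'une suite $(g_k\cdot\bar a)_k$ avec $g_k\in\Aut(M)$, d'où $\tp(\bar b)=\lim_k\tp(g_k\cdot\bar a)=\tp(\bar a)$. Il n'y a donc pas d'obstacle sérieux : la proposition est un corollaire quasi immédiat de l'élémentarité des automorphismes et de la dualité de Gelfand, le seul soin à prendre étant de passer des orbites à leurs adhérences, ce que permet la continuité de $\tp$.
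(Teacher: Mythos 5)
Votre démonstration est correcte et suit exactement la voie que le texte sous-entend en énonçant la proposition avec un \qed immédiat : l'invariance de $\tp$ provient de l'élémentarité des automorphismes, l'uniforme continuité du théorème~\ref{thm:dualité}, et le passage au quotient (y compris des orbites à leurs adhérences) du principe général rappelé en section~\ref{sec:oligo}. Rien à redire.
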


\begin{rema}
	En général, l'application $\Aut(M)\bbslash M^n\to S_n(T)$ n'est ni injective ni surjective.
	Quand l'action de $\Aut(M)$ sur $M^n$ est approximativement cocompacte, par compacité
	de $\Aut(M)\bbslash M^n$ on obtient qu'elle est surjective,
	et on verra même
	qu'elle est bijective, ce qui demande plus de travail (cf.\ théorème~\ref{thm: Ryll-Nardzewski}).
	Avant ça, on établira directement dans le cas concret de $\MAlg([0,1],\lambda)$
	que c'est une bijection, établissant au passage que sa théorie \emph{élimine les quantificateurs}
	(cf.\ théorème~\ref{thm:malg elim}).
\end{rema}

Finissons cette section par une connexion fondamentale entre les plongements élémentaires 
d'une structure séparable et l'espace des suites réalisant un certain type.
Soit $T$ une théorie complète, soient $M$ et $N$ deux modèles de $T$.
Si $\rho: M\to N$ est un plongement élémentaire, alors par densité
des formules dans l'espace des prédicats définissables, on a que pour tout suite $(a_i)_{i\in\N}$, 
$\tp((a_i)_{i\in\N})=\tp(\rho(a_i)_{i\in\N})$. De plus, par le théorème~\ref{thm:dualité},
cette propriété caractérise les plongements élémentaires. On va renverser le point de vue
sur les plongements élémentaires en s'appuyant sur cette remarque.

\begin{prop}\label{prop: plongements comme type}
	Soit $T$ une théorie complète, soient $M$ et $N$ deux modèles séparables de $T$.
	Soit $(\alpha_i)_{i\in\N}$ dense dans $M$, soit $p$ son type.
	L'application qui à un plongement élémentaire $\rho: M\to N$
	associe la suite $(\rho(\alpha_i))_{i\in\N}$ est une injection dont l'image est 
	l'ensemble des suites $(a_i)_{i\in\N}$ telles que $\tp((a_i)_{i\in\N})=p$.
\end{prop}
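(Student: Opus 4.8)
The plan is to verify three things: that the map $\rho\mapsto(\rho(\alpha_i))_{i\in\N}$ is well-defined (lands in the set of realizations of $p$), that it is injective, and that it is surjective onto that set. The first is immediate: I recalled just above the statement that an elementary embedding preserves types of tuples, so $\tp((\rho(\alpha_i))_{i\in\N})=\tp((\alpha_i)_{i\in\N})=p$, hence the sequence $(\rho(\alpha_i))_{i\in\N}$ realizes $p$ in $N$. Injectivity is also easy: since $(\alpha_i)_{i\in\N}$ is dense in $M$ and $\rho$ is isometric (in particular continuous), $\rho$ is determined by its values on the $\alpha_i$; so two elementary embeddings agreeing on all $\alpha_i$ coincide.

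The substantive part is surjectivity. Given a sequence $(a_i)_{i\in\N}$ in $N$ with $\tp((a_i)_{i\in\N})=p$, I want to produce an elementary embedding $\rho\colon M\to N$ with $\rho(\alpha_i)=a_i$ for all $i$. First I would define $\rho$ on the dense subset $\{\alpha_i:i\in\N\}$ by $\rho(\alpha_i)=a_i$; this is consistent even when $\alpha_i=\alpha_j$ for $i\neq j$, because equality of types of the pair forces $d^M(\alpha_i,\alpha_j)=0$, i.e. the atomic formula $d(x_i,x_j)$ evaluated at $p$ is zero, hence $d^N(a_i,a_j)=0$. More generally, for any quantifier-free formula $\varphi$ and any indices, $\varphi^M(\alpha_{i_1},\dots,\alpha_{i_k})=\varphi^N(a_{i_1},\dots,a_{i_k})$ because both equal $\varphi(p)$ read off the relevant coordinates; applying this to $\varphi=d$ shows $\rho$ is isometric on the dense set, so it extends uniquely to an isometry $\rho\colon M\to N$ by completeness of $N$. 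Next I must check $\rho$ is a morphism of $\mathcal L$-structures, i.e. commutes with the interpretations of function and relation symbols. On the dense set this again follows from $\tp((\alpha_i)_{i\in\N})=\tp((a_i)_{i\in\N})$: for a function symbol $f$ of arity $n$ and indices $i_1,\dots,i_n$, the element $f^M(\alpha_{i_1},\dots,\alpha_{i_n})$ is a limit of $\alpha_j$'s, and the quantifier-free formula $d(f(x_{i_1},\dots,x_{i_n}),x_j)$ has the same value at $p$ whether read in $M$ or $N$, so $\rho$ sends $f^M(\alpha_{i_1},\dots,\alpha_{i_n})$ to $f^N(a_{i_1},\dots,a_{i_n})$; by uniform continuity of $f^M$, $f^N$ and $\rho$ this passes to all of $M$. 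The relation symbols are handled identically using the quantifier-free formulas $R(x_{i_1},\dots,x_{i_n})$.

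It remains to upgrade $\rho$ from a (mere) embedding to an \emph{elementary} embedding. Here is the key point and the only place real work happens: I want $\varphi^M(b_1,\dots,b_n)=\varphi^N(\rho(b_1),\dots,\rho(b_n))$ for \emph{every} formula $\varphi$, not just quantifier-free ones. Since the $\alpha_i$ are dense and $\varphi^M,\varphi^N$ are uniformly continuous, it suffices to prove this for $b_j=\alpha_{i_j}$; and for tuples of the $\alpha_i$ the identity $\varphi^M(\alpha_{i_1},\dots,\alpha_{i_n})=\varphi^N(a_{i_1},\dots,a_{i_n})$ is \emph{exactly} the assertion that $\tp((\alpha_i)_{i\in\N})=\tp((a_i)_{i\in\N})$ as $\omega$-types on $\mathfrak P^{\mathcal L}_\omega$ — which is our hypothesis $p=\tp((a_i)_{i\in\N})$, bearing in mind (Remark after Theorem~\ref{thm:dualité} / density of formulas in $\mathfrak P^{\mathcal L}_\omega$) that an $\omega$-type is determined by its values on all formulas and that every formula $\varphi(x_1,\dots,x_n)$ may be regarded as an element of $\mathfrak P^{\mathcal L}_\omega$. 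So no separate argument is needed: elementarity is built into the equality of types, provided one is careful that $p$ is an $\omega$-type recording \emph{all} formulas in \emph{all} finite sub-tuples of variables, which is precisely the content of working in $\mathfrak P^{\mathcal L}_\omega=\mathfrak P^{\mathcal L,T}$. The main obstacle is thus bookkeeping rather than mathematics: keeping straight that ``$\tp$ of the dense sequence'' encodes enough information, and that uniform continuity (available by the remarks following Definition~\ref{def: Lstructure}) lets everything transfer from the dense set to the whole structure.
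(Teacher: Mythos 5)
Votre preuve est correcte et suit essentiellement la même démarche que celle du texte : injectivité par densité et unicité du prolongement isométrique, inclusion de l'image via la préservation des types par les plongements élémentaires, puis, pour la surjectivité, construction de $\rho$ sur la partie dense à partir de l'égalité des types (d'abord via $d(x_i,x_j)$ pour l'isométrie, puis via les formules atomiques pour la structure, enfin via toutes les formules pour l'élémentarité). Les détails supplémentaires que vous donnez (cas $\alpha_i=\alpha_j$, traitement des symboles de fonctions via $d(f(x_{i_1},\dots,x_{i_n}),x_j)$) sont des explicitations bienvenues de ce que le texte laisse implicite.
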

\begin{proof}
	Notons $\Phi$ l'application $\rho\mapsto (\rho(\alpha_i))_{i\in\N}$.
	L'injectivité de $\Phi$ est conséquence de la densité de $(\alpha_i)$
	et du fait que dans un espace métrique complet, tout isométrie est déterminée
	par sa restriction à une partie dense.
	D'après le paragraphe précédent, l'image de $\Phi$ est incluse dans 
	l'ensemble des suites $(a_i)_{i\in\N}$ telles que $\tp((a_i)_{i\in\N})=p$.
	
	Réciproquement, soit $(a_i)_{i\in\N}$ telle que  $\tp((a_i)_{i\in\N})=p$.
	Alors en particulier, $d(a_i,a_j)=p(d(x_i,x_j))=d(\alpha_i,\alpha_j)$ pour tout $i,j\in\N$,
	donc l'application qui associe à chaque $\alpha_i$ l'élément $a_i$ est une isométrie qui s'étend
	de manière unique en une isométrie $\rho: M\to N$. On voit ensuite que $\rho$ est 
	un plongement en notant plus généralement que 
	$\varphi^N(a_1,\dots,a_n)=\varphi^M(\alpha_1,\dots,\alpha_n)$ pour
	toute formule atomique $\varphi$, et en utilisant la densité. 
	On montre enfin que $\rho$ est élémentaire
	par le même raisonnement, en considérant cette fois-ci toutes les formules $\varphi$.
\end{proof}
\begin{rema}
	La même preuve fonctionne si l'on suppose seulement que $(\alpha_i)_{i\in\N}$
	engendre $M$ (ce qui veut dire que la plus petite sous structure
	contenant les $\alpha_i$ est égale à $M$; rappelons que par définition
	une sous-structure est fermée).
\end{rema}

\subsection{Élimination des quantificateurs}

\begin{defi} 
	On dit qu'une théorie complète $T$ \textbf{élimine les quantificateurs}
	lorsque la sous-algèbre  des formules 
	sans quantificateur est dense dans l'algèbre des prédicats définissables.
\end{defi}

Remarquons que si une théorie élimine les quantificateurs, alors tout
plongement entre des modèles de $T$ est élémentaire.
On va utiliser la preuve que l'action de $\Aut([0,1],\lambda)$ sur
$\MAlg([0,1],\lambda)$ est approximativement oligomorphe 
pour montrer
que la théorie de $\MAlg([0,1],\lambda)$ élimine les quantificateurs.

Avant ça, définissons les types sans quantificateur.
Disons qu'un prédicat définissable est \emph{sans quantificateur}
s'il est dans l'adhérence des formules sans quantificateur, et
notons $\mathfrak P^{\mathcal L,T}_{\qf}$ l'espace des 
prédicats définissables sans quantificateur. Pour $n\leq\omega$,
on note $\mathfrak P^{\mathcal L,T}_{n,\qf}=\mathfrak P_n^{\mathcal L,T}\cap \mathfrak P^{\mathcal L,T}_{\qf}$.

\begin{defi}
	Soit $n\leq\omega$.
	Un $n$-type sans quantificateur est un morphisme continu de norme $1$ de 
	l'algèbre des prédicats définissables sans quantificateur dans $\R$.
\end{defi}

On note $S_{n}^{\qf}(T)$ l'espace des types sans quantificateur (qf pour \emph{quantifier free}).
De même que pour les types, on montre que c'est un espace compact. 
Si $M$ est un modèle de $T$,
 pour chaque $\bar a\in M^n$ on a un type sans quantificateur
associé $p=\tp_{\qf}(\bar a)$ défini par $p(\varphi)=\varphi(\bar a)$, 
et l'application $\bar a \mapsto \tp_{\qf}(\bar a)$ est d'image dense. On a une projection naturelle
$\pi: S_n(T)\to S_n^{\qf}(T)$ donnée par $\pi(p)(\varphi)=p(\varphi)$ de sorte que pour tout $\bar a\in M^n$, 
$$\pi(\tp(\bar a))=\tp_{\qf}(\bar a).$$
L'application $\tp:M^n\to S_{n}^{\qf}(T)$ étant  d'image dense, 
on en déduit que $\pi$ est surjective. 
Par dualité, $\pi:S_\omega(T)\to S^{\qf}_\omega(T)$ est injective si et seulement si $T$ élimine les quantificateurs.
Enfin, on a l'analogue suivant de la proposition~\ref{prop: quotient type}.

\begin{prop}\label{prop:orbites fermées qf}
	Soit $T$ une théorie complète. Soit $M$ un modèle de $T$ et $n\leq \omega$.
	L'application $\tp_{\qf}: M^n\to S_n^{\qf}(T)$ passe au quotient en une application
	uniformément continue
	$\Aut(M)\bbslash M^n\to S_n^{\qf}(T)$. \qed
\end{prop}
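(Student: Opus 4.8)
Le plan est de reproduire l'argument qui a établi la proposition~\ref{prop: quotient type}, en remplaçant simplement \og{}élémentaire\fg{} par \og{}sans quantificateur\fg{} partout. Je commencerais par observer que $\tp_{\qf}$ est invariante sous l'action diagonale de $\Aut(M)$ sur $M^n$: tout $g\in\Aut(M)$ est en particulier un plongement de $M$ dans elle-même, donc d'après le lemme sur les formules sans quantificateur il commute à l'interprétation de toute formule sans quantificateur, et par passage aux limites uniformes il commute à l'interprétation de tout prédicat définissable sans quantificateur. Par conséquent $\varphi^M(g\cdot\bar a)=\varphi^M(\bar a)$ pour tout $\bar a\in M^n$, tout $g\in\Aut(M)$ et tout $\varphi\in\mathfrak P^{\mathcal L,T}_{n,\qf}$, c'est-à-dire $\tp_{\qf}(g\cdot\bar a)=\tp_{\qf}(\bar a)$, de sorte que $\tp_{\qf}$ est constante sur les orbites de $\Aut(M)$.

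Je noterais ensuite que $\tp_{\qf}$ est uniformément continue. En effet $\tp_{\qf}=\pi\circ\tp$, où $\pi\colon S_n(T)\to S_n^{\qf}(T)$ est le morphisme de restriction naturel (continu par définition de la topologie logique) et $\tp\colon M^n\to S_n(T)$ est uniformément continue d'après le théorème~\ref{thm:dualité}; comme $S_n(T)$ est compact, $\pi$ est automatiquement uniformément continue, donc la composée l'est aussi. Étant de surcroît continue, $\tp_{\qf}$ est constante non seulement sur les orbites mais sur les adhérences d'orbites, ce qui est exactement ce dont on a besoin pour qu'elle passe au quotient séparé $\Aut(M)\bbslash M^n$.

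Enfin j'invoquerais le principe général rappelé dans la section~\ref{sec:oligo}: pour un groupe $G$ agissant par isométries sur un espace métrique $X$, les applications uniformément continues $G$-invariantes de source $X$ s'identifient aux applications uniformément continues de source $G\bbslash X$. En l'appliquant avec $G=\Aut(M)$ agissant isométriquement sur $(M^n,d^n)$ et pour but l'espace (compact) $S_n^{\qf}(T)$, l'application $\tp_{\qf}$, uniformément continue et $\Aut(M)$-invariante, se factorise en une application uniformément continue $\Aut(M)\bbslash M^n\to S_n^{\qf}(T)$, ce qui est l'énoncé. Il n'y a pas de réel obstacle ici; le seul point un peu délicat, comme pour la proposition~\ref{prop: quotient type}, est que l'on quotiente par les adhérences d'orbites et non par les orbites, ce qui est inoffensif grâce à la continuité de $\tp_{\qf}$. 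Si l'on souhaite éviter le langage des espaces uniformes, il suffit de remarquer que lorsque $\mathcal L$ est dénombrable $S_n^{\qf}(T)$ est compact métrisable, de sorte que la correspondance citée s'applique mot pour mot sous sa forme métrique.
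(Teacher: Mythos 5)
Votre argument est correct et correspond exactement à ce que le texte laisse implicite en marquant l'énoncé d'un \qed : l'invariance de $\tp_{\qf}$ vient du fait que tout automorphisme est un plongement, donc commute aux formules sans quantificateur (puis à leurs limites uniformes), et l'uniforme continuité plus le passage au quotient par les adhérences d'orbites suivent le même schéma que pour la proposition~\ref{prop: quotient type}. Rien à redire.
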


\begin{theo}\label{thm:malg elim}
	La théorie $T$ de $\MAlg([0,1],\mu)$ élimine les quantificateurs, et pour tout $n\leq\omega$, 
	l'application 
	$\Aut([0,1],\lambda)\bbslash \MAlg([0,1],\lambda)^n\to S_n(T)$ est une bijection.
\end{theo}
\begin{proof}
	Comme l'action de $\Aut([0,1],\lambda)$ sur $\MAlg([0,1],\lambda)$ est approximativement oligomorphe 
	(exemple~\ref{ex: approxi oligo pour malg}),
	on a d'après les propositions~\ref{prop: quotient type} et~\ref{prop:orbites fermées qf} 
	que les applications $\tp_{\qf}$ et $\tp$ sont surjectives pour tout $n\leq\omega$ par compacité et densité.
	Soit $n\leq\omega$. On note $\pi:S_n(T)\to S_n^{\qf}(T)$ la projection naturelle.
	Pour tout $\bar a \in M^n$, on a
	$\pi(\tp(\bar a))=\tp_{\qf}(\bar a)$, donc pour montrer que $\pi$ est injective (et donc que $T$
	élimine les quantificateurs), il suffit de montrer que l'application 
	$\Aut([0,1],\lambda)\bbslash \MAlg([0,1],\lambda)^n\to S_n^{\qf}(T)$ est injective.
	
	Supposons d'abord $n<\omega$. Soient $[A_1,\dots,A_n]$ et $[B_1,\dots,B_n]$ des éléments de
	 $\Aut([0,1],\lambda)\bbslash\MAlg([0,1],\lambda)^n$
	tels que $\tp_{\qf}(A_1,\dots,A_n)=\tp_{\qf}(B_1,\dots,B_n)$. 
	Considérons, pour chaque 
	$\delta\in\{\mathds O,\mathds 1\}^n$, la formule sans quantificateur
	$$\varphi_\delta(x_1,\dots,x_n)=m((\delta(1)\fplus x_1)\wedge\cdots\wedge(\delta(n)\fplus x_n)).$$
	Alors, en reprenant les notations de l'exemple~\ref{ex: approxi oligo pour malg}, 
	$\varphi_\delta^M$ est la fonction qui à $(C_1,\dots,C_n)\in \MAlg([0,1],\lambda)^n$
	associe 
	$$\Phi(C_1,\dots,C_n)(\delta)=\lambda(C_1^{\delta(1)}\cap\cdots\cap C_n^{\delta(n)}).$$
	Comme $\tp_{\qf}(A_1,\dots,A_n)=\tp_{\qf}(B_1,\dots,B_n)$, on a 
	$\varphi_\delta^M(A_1,\dots,A_n)=\varphi_\delta^M(B_1,\dots,B_n)$ pour tout 
	$\delta\in\{\mathds O,\mathds 1\}^n$, et donc $\Phi(A_1,\dots,A_n)=\Phi(B_1,\dots,B_n)$. 
	Comme $\Phi$ est injective d'après la preuve au sein de l'exemple~\ref{ex: approxi oligo pour malg},
	on conclut que $[A_1,\dots,A_n]=[B_1,\dots,B_n]$ comme voulu. 
	
	Le cas $n=\omega$ est maintenant une conséquence aisée des cas $n<\omega$ que l'on vient d'établir.
\end{proof}

\begin{rema}
	Le fait que l'application $\Aut([0,1],\lambda)\bbslash \MAlg([0,1],\lambda)^n\to S_n(T)$ soit une 
	bijection se généralise aux structures métriques séparables dont le groupe d'automorphismes
	agit de manière approximativement oligomorphe (théorème~\ref{thm: Ryll-Nardzewski}). Il s'agit d'une partie
	du théorème de Ryll-Nardzewski, qui comme expliqué dans l'introduction joue un rôle fondamental dans 
	la preuve d'Ibarlucía ainsi que dans l'analyse des groupes polonais Roelcke-précompacts en général.
\end{rema}

\section{Définissabilité de sous-ensembles}\label{sec: definissable}

On fixe dans cette section $n\leq \omega$ et une théorie complète $T$.
On va travailler sur 
des prédicats définissables dans $\mathfrak P^{\mathcal L,T}_n$. Afin d'alléger les notations,
on notera désormais $\bar x\coloneq x_1,\dots,x_n$ 
(si $n<\omega$) et $\bar x\coloneq x_1,\ldots$ si $n=\omega$. Ainsi, un élément de
 $\mathfrak P^{\mathcal L,T}_n$
sera noté~$\varphi(\bar x)$.

Quitte à réindexer notre ensemble de variables, on suppose qu'on a également des variables 
$y_1,\ldots,y_n,\ldots$
(par exemple en remplaçant $x_{2n}$ par $x_n$ et $x_{2n+1}$ par $y_n$ pour chaque $n<\omega$), 
et on notera de même $\bar y\coloneq y_1,\dots,y_n$ 
(si $n<\omega$) et $\bar y\coloneq y_1,\ldots$ si $n=\omega$.

On commence par un détour par la notion d'implication en théorie des modèles métrique.

\subsection{Implications}

Le fait de n'avoir que des infimums et des supremums pour quantifier
rend la notion d'implication plus subtile en théorie des modèles continue qu'en théorie des modèles classique,
prenant une forme  quantitative.
Le but reste de donner une condition sur deux prédicats définissables
$\varphi(\bar x)$ et $\psi(\bar x)$ qui fera que dans \emph{tout} modèle $M$ de $T$, si un uplet 
$\bar a\in M^n$ \emph{satisfait} $\varphi(\bar x)$ (c'est-à-dire si $\varphi^M(\bar a)=0$), alors
$\bar a$ satisfait également $\psi(\bar x)$ (et donc $\psi^M(\bar b)=0$).

\begin{defi}
	Soit $T$ une théorie complète, soient $\varphi(\bar x)$ et $\psi(\bar x)$ deux prédicats définissables.
	On dit que $\varphi(\bar x)$ \textbf{implique} $\psi(\bar x)$ 
	si  il existe un modèle $M$ de $T$ tel que 
	pour tout  $\epsilon>0$, il existe $\delta>0$ tel que pour tout $\bar a\in M^n$,
	$$\text{si }\abs{\varphi^M(\bar a)}\leq \delta \text{ alors } \abs{\psi^N(\bar a)}\leq \epsilon.$$
\end{defi}

	\begin{exem} Dans la théorie d'une algèbre de mesure, 
		la formule qui dit que deux ensembles sont de mesure égale
		implique la formule qui dit que la mesure de leur union est plus petite que deux fois
		la mesure du premier.
		Plus précisément, la formule $\abs{m(x_1)-m(x_2)}$ implique la formule
		$m(x_1\vee x_2)\dotdiv 2m(x_1)$.
	\end{exem}

En utilisant des idées très proches de celles de la remarque~\ref{rmk:unif continuite}, 
on montre que si $\varphi(\bar x)$ implique $\psi(\bar x)$, alors elle le fait 
``de manière uniforme sur les modèles de $T$''.
\begin{prop}
	Si $\varphi(\bar x)$ implique $\psi(\bar x)$, alors pour tout $\epsilon>0$, il existe $\delta>0$
	tel que pour \emph{tout} modèle $N$ de $T$, et tout $\bar b\in N^n$,
	$$\text{si }\abs{\varphi^N(\bar b)}\leq \delta \text{ alors } \abs{\psi^N(\bar b)}\leq \epsilon.$$
	En particulier, si $\varphi^N(\bar b)=0$ alors $\psi^N(\bar b)=0$.
\end{prop}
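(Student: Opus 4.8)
The plan is to follow the pattern of Remark~\ref{rmk:unif continuite}: repackage the implication as a single \emph{énoncé} (a definable predicate with no free variables) and then invoke the completeness of $T$.

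Concretely, for $\delta,\epsilon>0$ I would introduce
\[
\chi_{\delta,\epsilon}\coloneq \sup_{\bar x}\,\min\!\bigl(\delta\dotdiv\abs{\varphi(\bar x)},\ \abs{\psi(\bar x)}\dotdiv\epsilon\bigr).
\]
This is a genuine element of $\mathfrak P^{\mathcal L,T}_0$: it is obtained from the definable predicates $\varphi(\bar x)$ and $\psi(\bar x)$ by applying the connectives $\abs{\cdot}$, $\dotdiv$ and $\min$ together with the real constants $\delta$ and $\epsilon$, and then taking a supremum over \emph{all} the variables $\bar x$, operations which all preserve definability. The key elementary observation is that, in any model $M$ of $T$, both arguments of the $\min$ are nonnegative, $\delta\dotdiv t=0$ iff $t\geq\delta$, and $s\dotdiv\epsilon=0$ iff $s\leq\epsilon$; hence $\chi_{\delta,\epsilon}^M=0$ if and only if for every $\bar a\in M^n$ one has $\abs{\varphi^M(\bar a)}\geq\delta$ or $\abs{\psi^M(\bar a)}\leq\epsilon$.

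Now fix $\epsilon>0$. By hypothesis there is a model $M$ of $T$ and a $\delta>0$ such that $\abs{\varphi^M(\bar a)}\leq\delta$ implies $\abs{\psi^M(\bar a)}\leq\epsilon$ for every $\bar a\in M^n$; in particular, for every $\bar a$ either $\abs{\varphi^M(\bar a)}\geq\delta$, or $\abs{\varphi^M(\bar a)}<\delta$ and then a fortiori $\abs{\psi^M(\bar a)}\leq\epsilon$. By the observation above, $\chi_{\delta,\epsilon}^M=0$. Since $\chi_{\delta,\epsilon}$ is an énoncé and $T$ is complete, its value does not depend on the model (a definable predicate without free variables is a uniform limit of sentences, which by Lemma~\ref{lem: norme independante de M} are interpreted identically in all models of $T$), hence $\chi_{\delta,\epsilon}^N=0$ for every model $N$ of $T$. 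Reading this in $N$: for every $\bar b\in N^n$, $\abs{\varphi^N(\bar b)}\geq\delta$ or $\abs{\psi^N(\bar b)}\leq\epsilon$. Taking any $\delta'\in\mathopen]0,\delta\mathclose[$ then yields, for every model $N$ of $T$ and every $\bar b\in N^n$, that $\abs{\varphi^N(\bar b)}\leq\delta'$ forces $\abs{\psi^N(\bar b)}\leq\epsilon$, which is the desired uniform implication; letting $\epsilon\to 0$ gives the last assertion.

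The only step I expect to require a little care is the claim that $\chi_{\delta,\epsilon}$, being an énoncé, has a model-independent value --- that is, that the completeness of $T$, originally phrased for formula-sentences, really extends to definable predicates without free variables. This is immediate from the density of $\mathfrak F^{\mathcal L}_0$ in $\mathfrak P^{\mathcal L,T}_0$ together with the fact (Lemma~\ref{lem: norme independante de M}) that the $\norm{\cdot}_T$-seminorm, and hence the interpretation of each formula-sentence, is the same across all models of $T$; everything else is bookkeeping, and the harmless boundary mismatch between ``$\geq\delta$'' and ``$\leq\delta$'' is absorbed by passing from $\delta$ to $\delta'<\delta$.
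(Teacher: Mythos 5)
Votre preuve est correcte et suit essentiellement la même démarche que celle de l'article : on encode l'implication approchée dans l'énoncé $\sup_{\bar x}\min\bigl(\delta\dotdiv\abs{\varphi(\bar x)},\abs{\psi(\bar x)}\dotdiv\epsilon\bigr)$, qui s'annule dans le modèle témoin, puis on invoque la complétude de $T$ pour transférer cette annulation à tous les modèles. Votre traitement est même un peu plus soigneux que celui du texte sur deux points mineurs (le passage de $\geq\delta$ à $\leq\delta'$ avec $\delta'<\delta$, et l'extension de la complétude aux prédicats définissables sans variable libre par densité des énoncés-formules).
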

\begin{proof}
	Soit $M$ un modèle de $T$ témoignant du fait que $\varphi(\bar x)$ implique $\psi(\bar x)$.
	Soit $\epsilon>0$, on dispose de $\delta>0$ tel que pour tout $\bar a\in M^n$,
	si $\abs{\varphi(\bar a)}\leq \delta$ alors $\abs{\psi(\bar a)}\leq \epsilon$. Mais ceci 
	est équivalent à dire que $M$ satisfait l'énoncé suivant:
	\[
	\sup_{\bar x} \min\big(\delta\dotdiv \abs{\varphi(\bar x)}, \abs{\psi(\bar x)}\dotdiv\epsilon\big).
	\]
	Par complétude de $T$, un tel énoncé sera satisfait dans tout modèle de~$T$, 
	ce qui nous donne le résultat voulu.
\end{proof}

Le lemme suivant va nous permettre de donner une autre manière de comprendre l'implication.

\begin{lemm}\label{lem:implication}
	Soit $X$ un ensemble.
	Soient $f,g: X\to \R^+$ bornées telles que pour tout $\epsilon>0$, il existe $\delta>0$ tel que
	pour tout $x\in X$, $f(x)\leq \delta$ implique $g(x)\leq \epsilon$. Alors il existe 
	$\alpha:\R^+\to\R^+$ continue telle que $\alpha(0)=0$ et pour tout $x\in X$, 
	$g(x)\leq\alpha(f(x))$.
\end{lemm}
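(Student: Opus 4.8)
The statement is a purely analytic fact about two bounded nonnegative functions $f,g$ on an arbitrary set $X$ satisfying the "$\epsilon$–$\delta$" implication: for every $\epsilon>0$ there is $\delta>0$ with $f(x)\le\delta\Rightarrow g(x)\le\epsilon$ for all $x\in X$. I want to produce a single continuous $\alpha:\R^+\to\R^+$ with $\alpha(0)=0$ dominating $g$ through $f$, i.e. $g(x)\le\alpha(f(x))$. The natural candidate is built from the pointwise bound $g\le G\circ f$ where $G(t)\coloneq\sup\{g(x):f(x)\le t\}$; the only issue is that $G$ need not be continuous, so I will majorize $G$ by a continuous (even better, a concave or piecewise-linear) function that still vanishes at $0$.

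First I would define, for $t\ge 0$, the monotone function
\[
G(t)\coloneq\sup\{\,g(x)\ :\ x\in X,\ f(x)\le t\,\},
\]
with the convention $\sup\emptyset=0$, so that $G$ is nondecreasing and bounded (by $\sup g<\infty$). By construction $g(x)\le G(f(x))$ for every $x$. The hypothesis says exactly that for each $\epsilon>0$ there is $\delta>0$ with $G(\delta)\le\epsilon$; since $G$ is nondecreasing this gives $\lim_{t\to 0^+}G(t)=0$, and we may also set $G(0)=0$ (possibly enlarging $G$ at $0$; it stays nondecreasing and the pointwise inequality is unaffected because any $x$ with $f(x)=0$ has $g(x)\le G(\delta)$ for all $\delta>0$, hence $g(x)=0$).

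Next I would replace $G$ by a continuous majorant. One clean way: define
\[
\alpha(t)\coloneq \inf\Bigl\{\,L\,t+G(s)+\tfrac{G(s)}{s}\,?\Bigr\}
\]
— actually the simplest is to take the least concave majorant, but to avoid fuss I would instead do an explicit interpolation. Pick a sequence $\delta_1>\delta_2>\cdots\to 0$ with $G(\delta_k)\le 2^{-k}$ (possible by $\lim_{t\to0^+}G=0$), set $\delta_0$ to be anything $>\delta_1$ with $\alpha(\delta_0)\coloneq \sup g$, and define $\alpha$ on $[0,\delta_0]$ to be the piecewise-linear function with $\alpha(0)=0$, $\alpha(\delta_k)=2^{-k+1}$ for $k\ge 1$ (say), interpolated linearly on each $[\delta_{k+1},\delta_k]$, and $\alpha(t)=\sup g$ for $t\ge\delta_0$. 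Then $\alpha$ is continuous on $\R^+$, $\alpha(0)=0$, $\alpha$ is nondecreasing, and for $t\in[\delta_{k+1},\delta_k]$ one has $G(t)\le G(\delta_k)\le 2^{-k}\le 2^{-k+1}=\alpha(\delta_{k+1})\le\alpha(t)$, while for $t\ge\delta_0$ trivially $G(t)\le\sup g=\alpha(t)$ and for $t\in[\delta_1,\delta_0]$, $G(t)\le\sup g=\alpha(\delta_0)$, so $G(t)\le\alpha(t)$... wait, need $\alpha$ to dominate there too, which it does once we note $\alpha(t)$ ranges between $2^{-1+1}=1$ and $\sup g$ — if $\sup g<1$ just scale. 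In all cases $G\le\alpha$ on $\R^+$, hence $g(x)\le G(f(x))\le\alpha(f(x))$ for all $x\in X$, which is the desired conclusion.

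The only real subtlety — the "main obstacle" — is the passage from the intrinsically discontinuous $G$ to a continuous $\alpha$: one must choose the interpolation nodes $\delta_k$ using the hypothesis ($G(\delta_k)$ small) and check that the piecewise-linear interpolant both is continuous at $0$ and stays above $G$ on each subinterval, which works because $G$ is monotone so it is controlled on $[\delta_{k+1},\delta_k]$ by its value at the right endpoint $\delta_k$. Everything else (boundedness of $G$, the pointwise inequality $g\le G\circ f$, the limit $G(0^+)=0$) is immediate from the definitions and the hypothesis. I would present the argument in roughly that order: define $G$, record its monotonicity/boundedness/limit at $0$, build $\alpha$ by interpolation, and conclude.
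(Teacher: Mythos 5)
Votre preuve est correcte et suit essentiellement la même démarche que celle du texte : on introduit l'enveloppe monotone $G(t)=\sup\{g(x)\colon f(x)\le t\}$ (le $\beta$ du texte), on observe qu'elle domine $g\circ f^{-1}$ ponctuellement et tend vers $0$ en $0$, puis on la majore par une fonction affine par morceaux continue en décalant d'un cran les valeurs aux n\oe{}uds d'une suite tendant vers $0$. Les seules différences sont cosmétiques (choix explicite des n\oe{}uds $\delta_k$ avec $G(\delta_k)\le 2^{-k}$ et des valeurs $2^{-k+1}$, au lieu de $t_n=2^n$ et $\alpha(t_n)=\beta(t_{n+1})$), et les petites hésitations de rédaction n'affectent pas la validité de l'argument.
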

\begin{proof}
	Posons dans un premier temps, pour $t\geq 0$,
	$$
	\beta(t)=\sup
	\{ g(x)\colon f(x)\leq t\}.
	$$
	Alors $\beta$ est croissante, et par hypothèse on a $\lim_{t\to 0}\beta(t)=0$.
	De plus par définition pour tout $t\geq 0$, on a que  pour tout $x\in X$, 
	si $g(x)\leq t$ alors $f(x)\leq \beta(t)$, et donc on a 
	$g(x)\leq \beta(f(x))$. Le seul problème
	est que $\beta$ n'est pas continue. On prend alors une suite
	strictement croissante indexée par les entiers relatifs $(t_n)_{n\in\mathbb Z}$
	de réels strictement positifs avec $\lim_{n\to-\infty} t_n=0$ (par exemple $t_n=2^{n}$),
	et on définit $\alpha:\R^+\to\R^+$ comme l'unique fonction affine sur chaque intervalle
	$[t_n,t_{n+1}]$
	telle que $\alpha(t_n)=\beta(t_{n+1})$ pour tout $n\in\mathbb Z$, et $\alpha(0)=0$. La
	fonction $\alpha$ est continue et puisque
	$\alpha\geq \beta$ on a bien que pour tout $x\in X$, 
	$g(x)\leq \alpha(f(x))$. 
\end{proof}

\begin{prop}
	Soit $T$ une théorie complète, soient $\varphi(\bar x)$ et $\psi(\bar x)$ deux prédicats
	définissables. S'équivalent:
	\begin{enumerate}[(i)]
		\item $\varphi(\bar x)$ implique $\psi(\bar x)$;
		\item Il existe une fonction continue $\alpha:\R\to\R$ telle que $\alpha(0)=0$ 
		et telle que dans tout modèle~$M$ de~$T$
		on a, pour tout $\bar a\in M^n$, que $\abs{\varphi^M(\bar a)}\leq \alpha(\psi^M(\bar a))$.
		\item Il existe une fonction continue $\alpha:\R\to\R$ telle que $\alpha(0)=0$ 
		et il existe un modèle $M$ de $T$
		tel que pour tout $\bar a\in M^n$, on a $\abs{\varphi^M(\bar a)}\leq \alpha(\psi^M(\bar a))$.
	\end{enumerate}
\end{prop}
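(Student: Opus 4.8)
Le plan est d'établir le cycle (i) $\Rightarrow$ (iii) $\Rightarrow$ (ii) $\Rightarrow$ (i), les deux outils clés étant le lemme~\ref{lem:implication} et la complétude de $T$, cette dernière servant à transférer une inégalité valable dans un seul modèle à tous les modèles, exactement comme dans la proposition précédente. Pour (i) $\Rightarrow$ (iii), je pars d'un modèle $M$ témoignant de l'implication : pour tout $\epsilon>0$ il existe $\delta>0$ tel que, pour tout $\bar a\in M^n$, $\absbig{\psi^M(\bar a)}\leq\delta$ entraîne $\absbig{\varphi^M(\bar a)}\leq\epsilon$. C'est précisément l'hypothèse du lemme~\ref{lem:implication} pour $X=M^n$, $f=\absbig{\psi^M}$ et $g=\absbig{\varphi^M}$, qui sont bornées puisque $\varphi$ et $\psi$ sont des prédicats définissables. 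Le lemme fournit alors une fonction continue $\alpha_0:\R^+\to\R^+$ avec $\alpha_0(0)=0$ et $\absbig{\varphi^M(\bar a)}\leq\alpha_0\big(\absbig{\psi^M(\bar a)}\big)$ pour tout $\bar a$ ; en posant $\alpha(t)\coloneq\alpha_0(\abs t)$ on obtient une fonction continue sur $\R$, nulle en $0$, telle que $\absbig{\varphi^M(\bar a)}\leq\alpha(\psi^M(\bar a))$, ce qui est (iii).

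Pour (iii) $\Rightarrow$ (ii), j'observe que l'inégalité $\absbig{\varphi^M(\bar a)}\leq\alpha(\psi^M(\bar a))$, vraie pour tout $\bar a\in M^n$, revient à dire que l'énoncé
\[
\sup_{\bar x}\big(\abs{\varphi(\bar x)}\dotdiv\alpha(\psi(\bar x))\big)
\]
s'interprète comme $0$ dans $M$. Ici $\alpha(\psi(\bar x))$ est un prédicat définissable (composition du connecteur $\alpha$ avec le prédicat définissable $\psi$), donc $\abs{\varphi}\dotdiv\alpha(\psi)$ en est un aussi, et son supremum sur toutes les variables est un énoncé, c'est-à-dire un élément de $\mathfrak P^{\mathcal L,T}_0$. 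Comme $T$ est complète, cet énoncé prend la même valeur $0$ dans tout modèle $N$, d'où $\absbig{\varphi^N(\bar a)}\leq\alpha(\psi^N(\bar a))$ pour tout $\bar a\in N^n$ : c'est (ii).

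Pour (ii) $\Rightarrow$ (i), je fixe un modèle $M$ et $\epsilon>0$. La continuité de $\alpha$ en $0$ et l'égalité $\alpha(0)=0$ donnent $\delta>0$ tel que $\abs t\leq\delta$ entraîne $\abs{\alpha(t)}\leq\epsilon$ ; alors tout $\bar a\in M^n$ avec $\absbig{\psi^M(\bar a)}\leq\delta$ vérifie $\absbig{\varphi^M(\bar a)}\leq\alpha(\psi^M(\bar a))\leq\epsilon$, ce qui est la condition définissant l'implication. Le cycle est ainsi bouclé.

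Le point technique principal se situe en (iii) $\Rightarrow$ (ii) : il faut s'assurer que $\abs{\varphi}\dotdiv\alpha(\psi)$ est un véritable prédicat définissable uniformément borné, de sorte que son supremum soit bien un énoncé auquel s'applique la complétude de $T$ ; cela repose sur le fait que $\alpha$ est un connecteur logique et que l'algèbre des prédicats définissables est stable par composition par les connecteurs et par passage au supremum. Le prolongement de $\alpha$ à $\R$ tout entier, en gardant $\alpha(0)=0$, règle au passage la question du signe éventuel de $\psi^M$.
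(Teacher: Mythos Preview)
Your argument matches the paper's: Lemma~\ref{lem:implication} gives (i)$\Rightarrow$(iii), completeness of $T$ applied to the sentence $\sup_{\bar x}\big(\abs{\varphi(\bar x)}\dotdiv\alpha(\psi(\bar x))\big)$ gives (iii)$\Rightarrow$(ii), and continuity of $\alpha$ at $0$ closes the cycle; the paper merely packages these as the two equivalences (ii)$\Leftrightarrow$(iii) and (i)$\Leftrightarrow$(iii), and extends $\alpha$ to the negative reals by $0$ rather than by $\alpha_0(\abs t)$, which are cosmetic differences.

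One caveat worth flagging: when you unpack ``$\varphi$ implique $\psi$'' you write $\abs{\psi^M}\leq\delta\Rightarrow\abs{\varphi^M}\leq\epsilon$, which reverses the roles of $\varphi$ and $\psi$ relative to the paper's definition just above. The proposition as stated (with $\abs{\varphi}\leq\alpha(\psi)$ in (ii)--(iii)) only holds under your reversed reading, and the paper's own proof tacitly makes the same swap; so this reflects a typo in the statement rather than a gap in your reasoning.
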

\begin{proof}
	On va montrer que (ii) équivaut à (iii), puis que (i) équivaut à (iii).
	
	L'implication (ii) $\Rightarrow$ (iii) est claire. Réciproquement, si (iii) est satisfaite, 
	la fonction continue $\alpha$ donnée par hypothèse est
	un connecteur logique, ainsi $\alpha(\psi(\bar x))$ est un prédicat définissable,
	et on remarque que $M$ satisfait l'énoncé
	$$\sup_{\bar x} \left(\abs{\varphi^M(\bar x)}\dotdiv \alpha(\psi(\bar x)\right).$$
	Comme $T$ est complète, tout modèle de $T$ satisfera cet énoncé, ce qui veut dire 
	que (ii) est vérifiée.
	
	Si (i) est satisfaite, soit $M$ un modèle témoignant du fait que 
	$\varphi(\bar x)$ implique $\psi(\bar x)$.
	Par le lemme~\ref{lem:implication} on dispose de $\alpha:\R^+\to\R^+$ continue telle 
	que $\alpha(0)=0$ et pour tout $\bar a\in M^n$, $\abs{\varphi^M(\bar a)}\leq \alpha(\psi^M(\bar a))$.
	On la prolonge en une fonction continue sur $\R$ en posant $\alpha(t)=0$
	pour tout $t<0$, et (iii) est alors établie. 
	Réciproquement si (iii) est satisfaite, la continuité de $\alpha$
	nous assure que $M$ témoigne du fait que $\varphi(\bar x)$ implique $\psi(\bar x)$, donc (i) 
	est vérifiée.	
\end{proof}

\begin{rema}
	En utilisant le théorème de compacité, on peut montrer que $\varphi(\bar x)$
	implique $\psi(\bar x)$ si et seulement si dans \emph{tout} modèle $M$ de $T$, on a
	pour tout $\bar a \in M^n$ que si $\varphi^M(\bar a)=0$, alors $\psi^M(\bar a)=0$
	(cf.\ \cite[Prop.~7.15]{benyaacovModeltheorymetric2008}).
\end{rema}

\subsection{Définissabilité dans un modèle}\label{sec: defin dans modèle}

On va avoir besoin de travailler sur $M^n$ pour $n\leq\omega$ avec une distance compatible.
Afin de gagner en lisibilité, on utilise les notations suivantes.
Pour $n<\omega$, on note désormais $d$ le prédicat définissable à $2n$ variables libres 
$d(x_1,y_1)+\dots+d(x_n,y_n)$, et si $n=\omega$ on note également $d$ le prédicat définissable
$\sum_{i\in\N} \frac 1{2^{i}}d(x_i,y_i)$ si $n=\omega$. Ce prédicat définissable s'interprète
donc comme une distance compatible sur $M^n$ si $M$ est un modèle de $T$.

\begin{defi}
	Soit $D\subseteq M^n$ non vide, où $n\leq\omega$. On dit que $D$ est définissable si la 
	fonction $\bar a \in M^n\mapsto d^M(\bar a,D)$ coïncide avec l'interprétation 
	sur $M^n$ d'un prédicat définissable.
\end{defi}

Au vu de la remarque~\ref{rmq: identification prédicats}, un tel prédicat définissable est alors unique, 
et on s'autorisera un abus de langage en disant directement que $d(\bar x,D)$ est un prédicat définissable.

En pratique, on dispose d'un critère plus souple pour décider de la définissabilité d'un sous-ensemble
de $M^n$. Il s'appuie sur le lemme~\ref{lem:implication} qui nous avait fourni une autre manière
de comprendre l'implication.

\begin{theo}\label{thm:definissabilité}
Un sous-ensemble~$D$ de~$M^n$ est définissable si et seulement s'il existe un prédicat définissable 
$\varphi(\bar x)\in\mathfrak P^{\mathcal L,T}_n$ à valeurs positives tel que pour tout $\epsilon>0$,
il existe $\delta>0$ tel que  $D=(\varphi^M)^{-1}(\{0\})$ et 
pour tout $\bar a\in M^n$, si $\varphi^M(\bar a)\leq \delta$
alors~$d(\bar a, D)\leq \epsilon$.
\end{theo}

\begin{rema}
	Une fois que l'on sait que $d(\bar x, D)$ est définissable, on voit que notre nouvelle condition
	de définissabilité demande en particulier que $\varphi(\bar x)$ implique $d(\bar x,D)$. 
	De plus, on a également que $d(\bar x,D)$ implique $\varphi(\bar x)$ par uniforme continuité
	de $\varphi^M$
	et le fait que $D\subseteq (\varphi^M)^{-1}(\{0\})$.
\end{rema}

\begin{proof}
	L'implication directe est claire. Réciproquement, supposons donc donné un prédicat 
	définissable $\varphi(\bar x)$ dont $D$ est l'ensemble des zéros et tel que pour tout 
	$\bar a\in M^n$, on a l'implication 
	$\varphi^M(\bar a)\leq \delta\Rightarrow d(\bar a, D)\leq \epsilon$.
	Quitte à remplacer $\varphi$ par $\abs{\varphi}$, on peut supposer que $\varphi$ est 
	à valeurs positives.
	
	Par le lemme~\ref{lem:implication} (où $X=M^n$), 
	il existe une fonction continue $\alpha:\R^+\to \R^+$ telle que 
	pour tout $\bar a\in M^n$, $d(\bar a,D)\leq \alpha(\varphi^M(\bar a))$. 
	On étend alors $\alpha$ 
	en un connecteur logique en posant $\alpha(t)=0$ pour $t\leq 0$.
    
    Considérons
     le prédicat définissable\footnote{Noter que la définition qui suit fait sens
    	d'après la fin de la section précédente:
    	ici l'infimum peut porter sur un uplet dénombrable
    	infini.}
    \[
    \psi(\bar x)=\inf_{\bar y} \alpha(\varphi(\bar x))+d(\bar x,\bar y).
    \]
    On va montrer que son interprétation sur $M$ coïncide avec la fonction $d^M(\cdot,D)$,
    ce qui montrera bien que cette dernière est un prédicat définissable.
    
    Soit donc $\bar a\in M^n$. On a $d^M(\bar a, D)\leq \alpha(\varphi^M(\bar x))$ et 
    donc $\psi^M(\bar a)\geq d^M(\bar a,D)$. Inversement, on dispose d'une suite $(\bar d_m)$ 
    d'éléments de $A$ telle que $d^M(\bar a,\bar d_m)\to d^M(\bar a,D)$, et comme 
    $\alpha(\varphi(\bar d_m))=0$ pour tout $m$, on conclut que $\psi^M(\bar a)\leq d^M(\bar a, D)$
    comme voulu.
\end{proof}

Si $A$ est un ensemble définissable dans $M$
modèle d'une théorie complète $T$, on peut donc lui associer un prédicat définissable
particulier: la distance à $A$ dans $M$. On va montrer dans la section suivante que
ce prédicat restera la distance à une partie non vide dans \emph{tout}
modèle de $T$, en utilisant une caractérisation des fonctions distances
à des fermés donnée par \textcite[Thm.~9.12]{benyaacovModeltheorymetric2008}.

Finissons cette section par une propriété importante qui nous permet de définir 
des prédicats définissables à partir de sous-ensembles définissables.

\begin{prop}\label{prop:quantif sur def}
	Soient $n,m\leq \omega$.
	Soit $D$ un sous-ensemble de $M^m$ définissable, soit $\varphi(\bar x,\bar y)$ un prédicat définissable.
	Alors la fonction $\tilde\psi:M^n\to \R$ donnée par
	$$\tilde\psi(\bar a)=\sup_{\bar b\in D} \varphi^M(\bar a,\bar b)$$
	coïncide avec l'interprétation d'un unique prédicat définissable.
\end{prop}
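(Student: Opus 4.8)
The plan is to \og relâcher\fg{} le supremum sur le sous-ensemble définissable $D$ en un supremum sur $M^m$ tout entier, en ajoutant un terme de pénalité fabriqué à partir du prédicat définissable $d(\bar y,D)$, puis de reconnaître le résultat comme un prédicat définissable. L'unicité sera immédiate d'après la remarque~\ref{rmq: identification prédicats}: un prédicat définissable est déterminé par son interprétation dans un modèle, donc il y a au plus un prédicat définissable qui s'interprète comme $\tilde\psi$. On peut aussi remarquer au passage que $\tilde\psi$ est bornée et uniformément continue sur $M^n$ (avec la même borne et le même module de continuité uniforme que $\varphi^M$ dans les variables $\bar x$, puisque prendre un supremum sur $\bar b$ préserve ces données); mais le vrai travail sera d'exhiber un prédicat définissable.

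Pour cela, je commencerais par observer que, $D$ étant définissable, $\rho(\bar y)\coloneq d(\bar y,D)$ est un prédicat définissable. J'appliquerais ensuite le lemme~\ref{lem:implication} avec $X=M^n\times M^m$, $f(\bar a,\bar b)=d^M(\bar b,D)$ et $g(\bar a,\bar b)=\max\big(\varphi^M(\bar a,\bar b)-\tilde\psi(\bar a),\,0\big)$: ces deux fonctions sont positives et bornées, et l'hypothèse \og $f$ petit $\Rightarrow g$ petit\fg{} découlera de l'uniforme continuité de $\varphi^M$ dans les variables $\bar y$ (si $\bar b$ est assez proche de $D$, on le déplace vers un $\bar b'\in D$ voisin, où $\varphi^M(\bar a,\cdot)\leq\tilde\psi(\bar a)$). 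Le lemme fournit alors une fonction continue $\alpha:\R^+\to\R^+$ avec $\alpha(0)=0$ telle que $\varphi^M(\bar a,\bar b)\leq\tilde\psi(\bar a)+\alpha\big(d^M(\bar b,D)\big)$ pour tous $\bar a,\bar b$; en la prolongeant par $0$ sur les négatifs, $\alpha$ devient un connecteur logique.

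Je poserais enfin
\[
\psi(\bar x)\coloneq\sup_{\bar y}\big(\varphi(\bar x,\bar y)-\alpha(d(\bar y,D))\big).
\]
C'est un prédicat définissable: $\alpha(d(\bar y,D))$ l'est (connecteur appliqué au prédicat définissable $\rho$), donc $\varphi(\bar x,\bar y)-\alpha(d(\bar y,D))$ aussi, et $\sup_{\bar y}$ — porté sur un uplet éventuellement infini de variables lorsque $m=\omega$ — envoie les prédicats définissables sur des prédicats définissables, d'après la fin de la section~\ref{sec:bases}. Pour conclure, il resterait à vérifier que $\psi^M=\tilde\psi$: en restreignant le supremum aux $\bar b\in D$ (où la pénalité est nulle) on obtient $\psi^M(\bar a)\geq\sup_{\bar b\in D}\varphi^M(\bar a,\bar b)=\tilde\psi(\bar a)$, et l'inégalité précédente donne $\varphi^M(\bar a,\bar b)-\alpha(d^M(\bar b,D))\leq\tilde\psi(\bar a)$ pour tout $\bar b$, d'où $\psi^M(\bar a)\leq\tilde\psi(\bar a)$.

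Le seul point un peu délicat que j'anticipe est le dosage de $\alpha$: la pénalité $\alpha(d(\bar y,D))$ doit s'annuler sur $D$ (pour ne rien perdre) tout en croissant assez vite pour dominer l'oscillation de $\varphi$ loin de $D$ (pour qu'aucun $\bar b\notin D$ ne batte le supremum sur $D$) — et c'est exactement le lemme~\ref{lem:implication} qui encode l'uniforme continuité de $\varphi$ sous la forme d'un tel $\alpha$. Le reste n'est qu'une vérification de routine en $\epsilon,\delta$, jointe au fait, déjà acquis, que les connecteurs et les supremums (même infinitaires) préservent la définissabilité.
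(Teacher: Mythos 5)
Votre preuve est correcte et suit essentiellement la même route que celle du texte : même prédicat pénalisé $\psi(\bar x)=\sup_{\bar y}\bigl(\varphi(\bar x,\bar y)-\alpha(d(\bar y,D))\bigr)$, même recours au lemme~\ref{lem:implication} pour fabriquer le connecteur $\alpha$, même vérification par double inégalité. La seule variante (anodine) est que vous appliquez le lemme directement au couple $f(\bar a,\bar b)=d^M(\bar b,D)$, $g(\bar a,\bar b)=\max\bigl(\varphi^M(\bar a,\bar b)-\tilde\psi(\bar a),0\bigr)$, ce qui rend la majoration $\psi^M\leq\tilde\psi$ immédiate, là où le texte extrait $\alpha$ comme module de continuité uniforme de $\varphi$ en $\bar y$ puis conclut par une courte approximation en $\epsilon$.
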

\begin{proof}
	La fonction 
	$(\bar a_1,\bar a_2,\bar b,\bar c)\mapsto \abs{\varphi^M(\bar a_1,\bar b)-\varphi^M(\bar a_2,\bar c)}$
	est uniformément continue, donc d'après le lemme~\ref{lem:implication}
	on dispose de $\alpha: \R\to \R$ continue telle que pour tous $\bar a\in M^n$, 
	et $\bar b,\bar c\in M^m$, on a 
	$\abs{\varphi^M(\bar a,\bar b)-\varphi^M(\bar a,\bar c)}\leq \alpha(d^M(\bar b, \bar c))$ et $\alpha(0)=0$.
	Commme à la fin de la preuve du théorème~\ref{thm:definissabilité}, on va  conclure la preuve 
	en montrant que $\tilde\psi$ coïncide avec l'interprétation du prédicat définissable 
	$\psi(\bar x)\coloneq\sup_{\bar y}\left(\varphi(\bar x,\bar y)-\alpha(d(\bar y,D))\right)$ 
	par double inégalité.
	
	Si $\bar a\in M^n$, par définition on a  $\psi^M(\bar a)\geq \tilde\psi(\bar a)$.
	Réciproquement, à $\epsilon>0$ fixé, prenons $\bar c\in M^n$.
	Par continuité de $\alpha$ et définition de $d^M(\cdot,D)$, on trouve $\bar b\in D$
	tel que $\abs{\alpha(d^M(\bar c,\bar b))-\alpha(d^M(\bar c, D))}<\epsilon$. Alors par définition
	$\varphi^M(\bar a,\bar b)\leq \tilde\psi(\bar a)$, et comme 
	$\abs{\varphi^M(\bar a,\bar b)-\varphi^M(\bar a,\bar c)}\leq\alpha(d^M(\bar b,\bar c))$,
	on a $$\varphi^M(\bar a,\bar c)\leq \varphi^M(\bar a,\bar b)+\alpha(d^M(\bar b,\bar c))\leq 
	\tilde\psi(\bar a)+\alpha(d^M(\bar c,D))+\epsilon.$$
	Ainsi $\varphi^M(\bar a, \bar c)-\alpha(d^M(\bar c,D))\leq \tilde\psi(\bar a)+\epsilon$, ce qui en faisant 
	tendre $\epsilon$ vers $0$ nous donne bien $\psi^M(\bar a)\leq \tilde \psi(\bar a)$.
\end{proof}
\begin{rema}
	Encore une fois, le prédicat définissable $\psi$ dont l'interprétation sur~$M$ coïncide avec $\tilde\psi$
	est unique, et on se permettra de le noter $\sup_{\bar y\in D}\varphi(\bar x,\bar y)$. 
	On utilisera aussi la notation suivante lorsque l'on préfère ne pas travailler dans un modèle fixé 
	et que l'on a noté $\phi(\bar y)$ le prédicat définissable $d(\bar y, D)$:
	$$\displaystyle\sup_{\phi(\bar y)=0}\varphi(\bar x,\bar y).$$
\end{rema}

\begin{rema}
	Le même résultat est vrai en remplaçant le supremum par un infimum, et c'est
	sous cette forme qu'il apparait dans \textcite[Thm.~9.17]{benyaacovModeltheorymetric2008}.
\end{rema}

\subsection{Définissabilité dans tous les modèles, types isolés}

Soit $T$ une théorie complète et soit $M$ un modèle de $T$. À un sous-ensemble
définissable~$D$ de~$M^n$ correspond un unique prédicat définissable s'interprétant 
dans $M$ comme la distance à l'ensemble $D$, que l'on a noté abusivement $d(\bar x,D)$. 
Le but de cette section est 
de vérifier que ce prédicat définissable s'interprète 
dans \emph{tout} modèle $N$ de $T$ comme la distance à un  sous-ensemble fermé non vide de $N^n$.
On va ensuite utiliser ceci pour montrer que si $p$ est un $n$-type dont l'ensemble $D$ des réalisations
est définissable
dans \emph{un} modèle $M$, 
alors $d(\bar x,D)$ s'interprète dans \emph{tout} modèle $N$ de $T$ comme la distance
à l'ensemble des réalisations de $p$ dans $N$, qui est non vide.

Afin de voir que $d(\bar x,D)$ s'interprète toujours comme la distance à un sous-ensemble fermé non vide,
il nous faut voir que la théorie $T$ ``sait'' que $d(\bar x,D)$ est la distance à un sous-ensemble fermé
\parencite[Thm.~9.12]{benyaacovModeltheorymetric2008}. 
Remarquons qu'une axiomatisation similaire des distances à des sous-ensembles fermés était déjà apparue en 
théorie descriptive des ensembles, afin de munir l'espace des fermés de tout espace métrique 
complet séparable
d'une topologie polonaise appelée topologie de Wijsman \parencite{beerPolishtopologyclosed1991}.

\begin{lemm}
	Soit $(X,d)$ un espace métrique complet non vide. 
	Alors une fonction continue $f: X\to \R^+$ est la distance à l'ensemble non vide de ses zéros si et seulement si 
	elle satisfait les conditions suivantes
	pour tout $x\in X$:
	\begin{align}
		&\inf_{y\in X} \max(f(y), \abs{f(x)-d(x,y)})=0;\label{cond:distance1}\\
		 &f(x)=\inf_{y\in X} f(y)+d(x,y).\label{cond:distance2}
	\end{align} 
\end{lemm}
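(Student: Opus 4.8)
The plan is to prove the two directions separately, the forward implication being the easy one. First, suppose $f(x) = d(x, Z)$ where $Z = f^{-1}(\{0\})$ is nonempty. Condition \eqref{cond:distance2} is immediate: it is exactly the statement that $d(x, Z) = \inf_{y \in X}(d(y,Z) + d(x,y))$, which holds because on one hand $d(x,Z) \leq d(y,Z) + d(x,y)$ by the triangle inequality applied through any point of $Z$, and on the other hand taking $y \in Z$ (so $d(y,Z) = 0$) and letting $d(x,y) \to d(x,Z)$ gives the reverse. For \eqref{cond:distance1}, I would note that $\inf_{y}\max(f(y), |f(x) - d(x,y)|) = 0$ says precisely that one can find $y$ with $f(y) = d(y,Z)$ arbitrarily small and $d(x,y)$ arbitrarily close to $f(x) = d(x,Z)$; completeness of $X$ lets me actually pick $y \in Z$ with $d(x,y)$ close to $d(x,Z)$, for which both terms in the max are small. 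So the forward direction is a direct computation.

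The substantive direction is the converse: given a continuous $f : X \to \R^+$ satisfying \eqref{cond:distance1} and \eqref{cond:distance2}, I must show $Z \coloneqq f^{-1}(\{0\})$ is nonempty and $f = d(\cdot, Z)$. First I would fix $x \in X$ and build a Cauchy sequence converging to a zero of $f$ near $x$. The idea is to iterate \eqref{cond:distance1}: starting from $x_0 = x$, given $x_k$, condition \eqref{cond:distance1} provides $x_{k+1}$ with $f(x_{k+1})$ small and $|f(x_k) - d(x_k, x_{k+1})|$ small, so that $d(x_k, x_{k+1}) \approx f(x_k)$ and $f(x_{k+1})$ can be forced below $\tfrac12 f(x_k)$ (say). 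Then $f(x_k) \to 0$ geometrically, hence $\sum_k d(x_k, x_{k+1})$ converges, so $(x_k)$ is Cauchy; its limit $z$ satisfies $f(z) = 0$ by continuity, and $d(x, z) \leq \sum_k d(x_k, x_{k+1})$, which by summing the geometric estimates is $\leq f(x) + (\text{arbitrarily small error})$. Being a bit careful with the choice of tolerances at each stage, one gets: for every $\eta > 0$ there is $z \in Z$ with $d(x,z) \leq f(x) + \eta$. This shows $Z \neq \emptyset$ and $d(x, Z) \leq f(x)$.

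It remains to prove the reverse inequality $f(x) \leq d(x, Z)$, and this is where condition \eqref{cond:distance2} is used. I would argue that \eqref{cond:distance2} gives $f(x) \leq f(z) + d(x, z) = d(x,z)$ for every $z \in Z$, whence $f(x) \leq \inf_{z \in Z} d(x,z) = d(x,Z)$. Combined with the previous paragraph this yields $f(x) = d(x, Z)$ for all $x$, completing the proof. The main obstacle I anticipate is purely bookkeeping in the iteration of the converse direction: one must choose the error tolerances $\epsilon_k$ at each step summable and small enough that both $f(x_k) \to 0$ and the accumulated displacement $\sum_k d(x_k, x_{k+1})$ stays within $f(x) + \eta$; this is routine but needs to be set up cleanly (e.g.\ demanding at step $k$ that $f(x_{k+1}) \leq \epsilon_k$ and $d(x_k, x_{k+1}) \leq f(x_k) + \epsilon_k$ with $\sum_k \epsilon_k < \eta$ and $\epsilon_k$ chosen after $f(x_k)$ is known, using that $f(x_k) \to 0$ follows once $\epsilon_k < \tfrac12 f(x_k)$ type control is imposed — or more simply, run the recursion so that $f(x_{k+1}) \le 2^{-k-1}\eta$ directly, which \eqref{cond:distance1} permits).
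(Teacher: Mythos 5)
Your proposal is correct and follows essentially the same route as the paper: the forward direction is the same direct computation, and the converse iterates condition~\eqref{cond:distance1} to build a Cauchy sequence whose limit is a zero of $f$ within $f(x)+\eta$ of $x$, then uses condition~\eqref{cond:distance2} for the reverse inequality $f(x)\leq d(x,D)$. The only cosmetic remark is that completeness of $X$ plays no role in the forward direction (picking $y\in Z$ with $d(x,y)$ close to $d(x,Z)$ is just the definition of the infimum); it is needed only for the Cauchy-sequence argument in the converse, exactly as in the paper.
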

\begin{proof}
	Pour voir l'implication directe, prenons $D\subseteq X$ non vide fermé et considérons l'application
	$f:x\mapsto d(D,x)$. 
	On constate directement que la condition~\eqref{cond:distance1} est vérifiée en considérant,
	pour chaque $\epsilon>0$, un élément $y\in D$ tel que $d(D,x)\leq d(y,x)<d(D,x)+\epsilon$. 
	On montre de même
	que $\inf_{y\in X} f(y)+d(x,y)\leq f(x)$, ce qui en utilisant le fait que $f$ est $1$-lipschitzienne
	nous permet de conclure que~\eqref{cond:distance2} est satisfaite.
	
	Réciproquement, soit $f: X\to \R^+$ satisfaisant les conditions~\eqref{cond:distance1} et~\eqref{cond:distance2}, et soit $x\in X$.
	Soit $D$ l'ensemble des zéros de $f$, on va d'abord montrer que $d(x,D)\leq f(x)$ en exhibant
	pour tout $\epsilon>0$ un $y\in D$ tel que $d(x,y)\leq f(x)+\epsilon$, ce qui montrera en 
	particulier que $D$~est non vide.

	Soit donc $\epsilon>0$. On doit trouver $y\in D$ tel que $d(x,y)\leq f(x)+\epsilon$,
	et on va l'obtenir comme limite d'une suite de Cauchy. On pose $y_0=x$ puis, $y_i$ étant construit,
	on applique la condition~\eqref{cond:distance1} pour choisir $y_{i+1}\in X$
	tel que 
	$$f(y_{i+1})\leq \frac \epsilon{2^{i+3}} \text{ et } 
	\abs{f(y_i)-d(y_i,y_{i+1})}\leq \frac \epsilon{2^{i+2}}.$$
	En assemblant ces deux conditions, on obtient que pour tout $i\geq 1$, 
	$d(y_i,y_{i+1})\leq \frac{\epsilon}{2^{i+1}}$,
	donc la suite $(y_i)$ est de Cauchy et de plus sa limite $y$ 
	satisfait 
	$$d(y_0,y)\leq d(y_0,y_1) + \sum_{i\geq 1} \frac{\epsilon}{2^{i+1}}
	=d(y_0,y_1)+\frac{\epsilon}2\leq f(x)+\epsilon,$$
	la dernière inégalité étant conséquence du fait que 
	$\abs{d(y_0,y_1)-f(y_0)}\leq \frac\epsilon2$ et $y_0=x$ . 
	Comme $f$ est continue
	à valeurs positives
	et comme pour tout $i\geq 1$ on a $f(y_{i+1})\leq \frac \epsilon{2^{i+3}}$, on a de plus $f(y)=0$,
	et donc $y\in D$ comme voulu. Ainsi on a bien que $d(x,D)\leq f(x)+\epsilon$ pour tout $\epsilon>0$,
	et donc que $d(x,D)\leq f(x)$.
	
	Enfin, la condition~\eqref{cond:distance2} nous garantit que pour tout $x\in X$ et tout $y\in D$, on a
	$f(x)\leq d(x,y)$, et donc $f(x)\leq d(x,D)$, ce qui termine la preuve.	
\end{proof}

\begin{prop}
	Soit $M$ un modèle d'une théorie $T$ complète, soit $n\leq\omega$, soit $D\subseteq M^n$
	définissable. Alors le prédicat définissable $d(\bar x,D)$ s'interprète dans tout modèle 
	de $T$ comme la distance à un sous-ensemble fermé non vide. 
\end{prop}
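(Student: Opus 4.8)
Le plan est d'utiliser la caractérisation fournie par le lemme précédent (une fonction continue positive sur un espace métrique complet non vide est la distance à l'ensemble de ses zéros si et seulement si elle satisfait les conditions~\eqref{cond:distance1} et~\eqref{cond:distance2}), combinée à la complétude de $T$. Notons $\varphi(\bar x)\in\mathfrak P^{\mathcal L,T}_n$ l'unique prédicat définissable dont l'interprétation dans $M$ est $d^M(\cdot,D)$. Comme $D$ est non vide, $(M^n,d^M)$ est un espace métrique complet non vide, donc le lemme précédent donne que $\varphi^M$ est exactement la distance à $D=(\varphi^M)^{-1}(\{0\})$; en particulier $\varphi^M$ est partout $\geq 0$ et satisfait \eqref{cond:distance1} et \eqref{cond:distance2} en tout point de $M^n$.

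L'observation clé est que chacune de ces trois propriétés affirme la nullité de l'interprétation dans $M$ d'un certain énoncé. En utilisant que $d(\bar x,\bar y)$ est un prédicat définissable et que l'infimum sur un uplet (possiblement dénombrable) de variables, ainsi que la composition par un connecteur logique continu ($\max$, $\lvert\cdot\rvert$, $\dotdiv$, $+$), préservent le caractère définissable (comme mis en place à la fin de la section~\ref{sec:bases}), je formerais les énoncés
\[
\sup_{\bar x}\bigl(\0\dotdiv\varphi(\bar x)\bigr),\quad
\sup_{\bar x}\,\inf_{\bar y}\,\max\bigl(\varphi(\bar y),\lvert\varphi(\bar x)-d(\bar x,\bar y)\rvert\bigr),\quad
\sup_{\bar x}\,\bigl\lvert\varphi(\bar x)-\inf_{\bar y}\bigl(\varphi(\bar y)+d(\bar x,\bar y)\bigr)\bigr\rvert,
\]
qui sont tous sans variable libre. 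D'après le paragraphe précédent, ces trois énoncés s'interprètent comme $0$ dans $M$, donc appartiennent à $\Th(M)=T$.

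J'invoquerais ensuite la complétude de $T$: dans un modèle quelconque $N$ de $T$, ces trois énoncés s'interprètent aussi comme $0$, ce qui signifie exactement que $\varphi^N:N^n\to\R$ est partout $\geq 0$ et satisfait \eqref{cond:distance1} et \eqref{cond:distance2} en tout point de $N^n$. Or $(N^n,d^N)$ est de nouveau un espace métrique complet non vide (un produit fini, ou un produit dénombrable muni de la distance $d^\omega$, de copies de l'espace complet non vide $N$). L'autre implication du lemme précédent, appliquée avec $X=N^n$ et $f=\varphi^N$, montre alors que $\varphi^N$ est la distance à l'ensemble non vide et fermé de ses zéros. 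Ainsi $d(\bar x,D)$ s'interprète dans tout modèle de $T$ comme la distance à un sous-ensemble fermé non vide, comme voulu.

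Je ne m'attends pas à un obstacle sérieux ici: tout le contenu se trouve dans le lemme à double sens, et le travail restant consiste à réécrire \eqref{cond:distance1} et \eqref{cond:distance2} sous forme d'énoncés puis à vérifier que les opérations en jeu restent dans l'algèbre des prédicats définissables (en particulier que les infimums internes portant sur le uplet entier $\bar y$ sont licites, ce qui est précisément ce qui a été arrangé à la fin de la section~\ref{sec:bases}). Le seul point à surveiller est la positivité de $\varphi^N$, d'où l'inclusion de l'énoncé $\sup_{\bar x}(\0\dotdiv\varphi(\bar x))$ parmi ceux que l'on transfère; on pourrait aussi remplacer $\varphi$ par $\lvert\varphi\rvert$ dès le départ.
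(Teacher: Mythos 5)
Votre preuve est correcte et suit essentiellement la même démarche que celle du texte : traduire la positivité et les conditions~\eqref{cond:distance1} et~\eqref{cond:distance2} en énoncés (prédicats définissables sans variable libre) satisfaits par $M$, invoquer la complétude de $T$ pour les transférer à tout modèle $N$, puis appliquer la réciproque du lemme de caractérisation des fonctions distance dans $N^n$. Vos trois énoncés sont des transcriptions fidèles des conditions du lemme et les opérations utilisées (infimum sur un uplet de variables, connecteurs logiques) sont bien celles autorisées à la fin de la section~\ref{sec:bases}.
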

\begin{proof}
	Le fait que $d^M(\bar x,D)$ soit une fonction positive se traduit par le fait que
	$M$ satisfasse l'énoncé $\inf_{\bar x}\min(\0,d(\bar x,D))$.
	De plus, les conditions~\eqref{cond:distance1} et~\eqref{cond:distance2} se traduisent
	par le fait que $M$ satisfasse les énoncés suivants, qui sont donc dans $T$:
	\begin{enumerate}[(i)]
		\item $\displaystyle \sup_{\bar x}\inf_{\bar y} d(\bar x,D)-d(\bar x,\bar y)$;
		\item $\displaystyle \sup_{\bar x}\abs{d(\bar x,D)-\inf_{\bar y}\left( d(\bar x,D)
			  +d(\bar x,\bar y)\right)}$.
	\end{enumerate}
	Ainsi $d(\bar x,D)$ sera 
	interprétée dans tout modèle de $T$ comme la distance à un ensemble non vide d'après
	la proposition précédente.
\end{proof}

\begin{defi}\label{df:type isolé}
	Soit $T$ une théorie complète.
	Un type $p\in S_n(T)$ est dit \textbf{isolé} s'il existe un modèle $M$ dans 
	lequel l'ensemble des réalisations de $p$ est définissable (en particulier, non vide).
\end{defi}

\begin{theo}\label{thm:principal est real partout}
	Soit $p\in S_n(T)$ un type isolé. Alors $p$ est réalisé dans tout modèle de $T$, et
	il existe un (unique) prédicat définissable $d(\bar x,p)$
	dont l'interprétation dans tout modèle de $T$ est la distance à l'ensemble des réalisations 
	de $p$. 
\end{theo}
\begin{proof}
	Soit $M$ un modèle de $T$ tel que l'ensemble $D$ des réalisations de $p$ soit définissable.
	On pose alors $d(\bar x,p)\coloneq d(\bar x,D)$. Ce prédicat définissable appartient clairement 
	à $p$. Pour chaque prédicat définissable $\varphi(\bar x)$ appartenant à $p$,
	l'uniforme continuité de $\varphi^M$ nous donne que $d(\bar x, p)$ implique $\varphi(\bar x)$. En 
	effet à $\epsilon>0$ fixé on dispose de $\delta>0$ tel que
	pour tous $\bar a,\bar b\in M^n$, si $d^M(\bar a,\bar b)<\delta$ alors 
	$\abs{\varphi^M(\bar a)-\varphi^M(\bar b)}<\epsilon$. Donc si $d(\bar a,p)<\delta$
	on dispose de $\bar b$ satisfaisant $p$ tel que $d^M(\bar a,\bar b)<\delta$,
	donc  $\varphi^M(\bar b)=0$ et donc~$\abs{\varphi^M(\bar a)}<\epsilon$.

	Soit alors $N$ un modèle quelconque de $T$. Soit $D'$ l'ensemble des zéros de $d^N(\bar x,p)$.
	Alors comme $d(\bar x,p)$ appartient à $p$, toute réalisation de $p$ doit appartenir à $D'$.
	Réciproquement, soit $\bar a\in D'$, soit $\varphi(\bar x)\in p$. 
	Comme $d(\bar x,p)$ implique $\varphi(\bar x)$,
	on a bien  $\varphi^N(\bar a)=0$. Ainsi $\bar a$ est une réalisation de $p$, ce qui termine la
	preuve.
\end{proof}

Nous pouvons enfin appliquer le résultat précédent dans le cas simple de $\MAlg([0,1],\lambda)$.
Le cas général est donné dans la proposition~\ref{prop:tp realise si compacité}.

\begin{theo}\label{thm: real defin xmu}
	Soit $T$ la théorie de $\MAlg([0,1],\lambda)$. Alors tout $\omega$-type $p\in S_\omega(T)$
	est isolé.
\end{theo}
\begin{proof}
	On a déjà vu que $p$ est réalisé dans $\MAlg([0,1],\lambda)$ (théorème~\ref{thm:malg elim}). 
	Soit $D\subseteq\MAlg([0,1],\lambda)^\N$ l'ensemble fermé 
	$\Aut([0,1],\lambda)$-invariant des réalisations de~$p$.
	Considérons la fonction $\tilde\varphi$ sur $\MAlg([0,1],\lambda)^\N$ donnée par 
	$\tilde\varphi(\bar a)=d^{\MAlg(X,\mu)}(\bar a,D)$. Alors $\tilde\varphi$ est 
	$\Aut([0,1],\lambda)$-invariante, c'est donc l'interprétation d'un prédicat définissable.
	On conclut que l'ensemble des réalisations de $p$ est définissable, donc $p$ est bien
	isolé.
\end{proof}

\section{Preuve du théorème~\ref{THM:MAINTER}}\label{sec:pfautxmu}

Soit $T$ la théorie de l'algèbre de mesure $\MAlg([0,1],\lambda)$.
Nous avons déjà montré que l'action de $\Aut([0,1],\lambda)$  sur $\MAlg([0,1],\lambda)$
est approximativement oligomorphe (cf.\ exemple~\ref{ex: approxi oligo pour malg}), 
ce qui nous a permis d'identifier
$\Aut([0,1],\lambda)\bbslash \MAlg([0,1],\lambda)^\N$ avec $S_\omega(T)$. Rappelons que 
les fonctions continues $\Aut([0,1],\lambda)\bbslash \MAlg([0,1],\lambda)^\N\to\R$
s'identifient par passage au quotient avec les fonctions $\MAlg([0,1],\lambda)^\N\to\R$ continues 
$\Aut([0,1],\lambda)$-invariantes: \emph{l'algèbre des prédicats définissables s'identifie à l'algèbre
des fonctions  $\MAlg([0,1],\lambda)^\N\to\R$ continues 
$\Aut([0,1],\lambda)$-invariantes}.

Il va désormais être plus commode, comme expliqué à la fin de la section~\ref{sec:(T)}, de travailler
dans l'algèbre de mesure de $X=\{0,1\}^{\N\times\mathbb F_2}$ muni de la mesure 
$\mu=(\frac 12 \delta_0+\frac 12 \delta_1)^{\otimes(\N\times\mathbb F_2)}$.
Elle est
isomorphe à l'algèbre de mesure de $Y\coloneq\{0,1\}^\N$ muni de la mesure 
$\nu\coloneq(\frac 12 \delta_0+\frac 12 \delta_1)^{\otimes\N}$ puisque l'on peut trouver,
en utilisant une bijection $\N\to\N\times\mathbb F_2$, une bijection bimesurable $Y\to X$ qui préserve
la mesure.

Pour $\gamma\in\mathbb F_2$, soit $f_{\gamma}:X\to Y$ définie par 
$f_{\gamma}(x)=(x(n,\gamma))_{n\in\N}$. Alors $f_{\gamma}$ préserve la mesure :
pour tout borélien $A\subseteq Y$, $\nu(A)=\mu(f_{\gamma}\inv(A))$. 
On vérifie alors 
que l'application qui à un borélien $A$ de $Y$ associe $f_{\gamma}\inv(A)$
passe au quotient en un plongement d'algèbres de mesure que l'on note $f_\gamma^*:\MAlg(Y,\nu)\to\MAlg(X,\mu)$.

\begin{rema}Ce passage au quotient est un phénomène 
complètement général. De plus, sous de bonnes hypothèses, tout plongement 
entre algèbres de mesure est de cette forme (voir la première section
du chapitre 2 de \textcite{glasnerErgodicTheoryJoinings2003}).
\end{rema}

Notons $M_\gamma$ l'image de $f_\gamma^*$, qui est une sous-structure de $\MAlg(X,\mu)$
correspondant à la sous-algèbre des parties $f_\gamma$-mesurables. Il est crucial 
pour la suite de remarquer que pour tous $\gamma_1, \gamma_2\in\mathbb F_2$ distincts, 
les sous-structures $M_{\gamma_1}$
et $M_{\gamma_2}$ sont \textbf{indépendantes} au sens de la théorie des probabilités:
pour tout $A\in M_{\gamma_1}$ et tout $B\in M_{\gamma_2}$, on a $\mu(A\cap B)=\mu(A)\mu(B)$. Ainsi
si on fixe $A\in M_{\gamma_1}$, alors pour $B\in M_{\gamma_2}$ la valeur de $\mu(A\cap B)$
\emph{ne dépend que de $\mu(B)$}. En particulier elle ne dépend que du type de $B$.
Plus généralement, on a la formulation suivante, qui est un cas particulier
du fait que la \emph{relation d'indépendance stable} coïncide avec l'indépendance usuelle des
probabilités (\cite[Thm.~4.1]{benyaacovSchrodingerCat2006}, voir aussi \cite[Sec~16]{benyaacovModeltheorymetric2008}).

\begin{lemm}\label{lem:stabilite}
	Soient $M_1$ et $M_2$ deux sous-structures de $\MAlg(X,\mu)$ qui sont des modèles indépendants de $T$.
	Soit $\varphi(\bar x,\bar y)$ un prédicat définissable. Alors pour tout $\bar A\in M_1^\N$, 
	la fonction $\bar B\in M_2^\N\mapsto \varphi(\bar A,\bar B)$ coïncide avec l'interprétation
	dans $M_2$
	d'un prédicat définissable que l'on note $\mathsf{d}_{\bar A,\varphi}(\bar y)$.
\end{lemm}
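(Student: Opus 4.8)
The plan is to use quantifier elimination (Theorem~\ref{thm:malg elim}) to reduce to an atomic formula, for which the independence of $M_1$ and $M_2$ gives the statement by a direct computation. First I would record that the class of definable predicates $\varphi(\bar x,\bar y)$ for which the conclusion holds is stable under the logical connectives and under uniform limits. For connectives this is clear: if $\bar B\mapsto\varphi_i^{\MAlg(X,\mu)}(\bar A,\bar B)$ is the interpretation in $M_2$ of some $g_i\in\mathfrak P^{\mathcal L,T}_\omega$, then $c(\varphi_1,\dots,\varphi_k)$ is witnessed by $c(g_1,\dots,g_k)$. For uniform limits, if $\|\varphi-\psi_n\|_T\to 0$ with $\psi_n$ witnessed by $g_n$, then computing all norms inside the single model $\MAlg(X,\mu)$ and using Lemma~\ref{lem: norme independante de M} one gets $\|g_n-g_m\|_T\le\|\psi_n-\psi_m\|_T$, so the $g_n$ converge to some $g\in\mathfrak P^{\mathcal L,T}_\omega$, and $g^{M_2}(\bar B)=\lim_n\psi_n^{\MAlg(X,\mu)}(\bar A,\bar B)=\varphi^{\MAlg(X,\mu)}(\bar A,\bar B)$ uniformly in $\bar B\in M_2^\N$. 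Since $T$ eliminates the quantifiers, quantifier-free formulas are dense in $\mathfrak P^{\mathcal L,T}$, and since these are connective combinations of atomic formulas, it suffices to treat the case where $\varphi$ is atomic.

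Every atomic formula of the language of measure algebras is, up to logical equivalence, of the form $m(t)$ for a term $t$ (recall that $d(t_1,t_2)$ is interpreted as $m(t_1\fplus t_2)$). A term uses only finitely many variables, say $x_{i_1},\dots,x_{i_p}$ and $y_{j_1},\dots,y_{j_q}$; putting the corresponding Boolean expression in disjunctive normal form, its interpretation is $t^{\MAlg(X,\mu)}(\bar A,\bar B)=\bigsqcup_{(\delta,\eta)\in S}A^\delta\cap B^\eta$ for a suitable $S\subseteq\{0,1\}^p\times\{0,1\}^q$, where $A^\delta:=A_{i_1}^{\delta(1)}\cap\dots\cap A_{i_p}^{\delta(p)}\in M_1$ and $B^\eta:=B_{j_1}^{\eta(1)}\cap\dots\cap B_{j_q}^{\eta(q)}\in M_2$, with $C^0=C$ and $C^1=\mathds 1\fplus C$. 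By pairwise disjointness of the cells and then by independence of $M_1$ and $M_2$, one has
\[
m(t)^{\MAlg(X,\mu)}(\bar A,\bar B)=\sum_{(\delta,\eta)\in S}\mu(A^\delta\cap B^\eta)=\sum_{(\delta,\eta)\in S}\mu(A^\delta)\,\mu(B^\eta).
\]
This is the heart of the argument: independence decouples $\bar A$, which now enters only through the scalars $c_\delta:=\mu(A^\delta)\in[0,1]$, from $\bar B$, which enters only through $\mu(B^\eta)=\varphi_\eta^{M_2}(\bar B)$ for the quantifier-free formula $\varphi_\eta(\bar y):=m\bigl(u_{j_1}^{\eta(1)}\wedge\dots\wedge u_{j_q}^{\eta(q)}\bigr)$, where $u_j^0:=y_j$ and $u_j^1:=\mathds 1\fplus y_j$. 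Thus $\mathsf d_{\bar A,\varphi}(\bar y):=\sum_{(\delta,\eta)\in S}c_\delta\,\varphi_\eta(\bar y)$, a finite real-linear combination of quantifier-free formulas, is the desired definable predicate; its uniqueness is immediate from Remark~\ref{rmq: identification prédicats}.

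I expect the only genuine difficulty to be organizational: stating the two reductions cleanly (closure under connectives and uniform limits, then passage from quantifier-free formulas to atomic ones), and being careful that $M_1$ and $M_2$ genuinely sit inside the fixed model $\MAlg(X,\mu)$ as substructures — in fact elementary ones, by quantifier elimination — so that all interpretations and $\|\cdot\|_T$-norms may be computed there. Once this scaffolding is in place, the independence computation is entirely elementary, and, in contrast with the general statement quoted from Ben Yaacov, no genuine stability theory is needed for this particular algebra.
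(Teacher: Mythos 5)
Votre preuve est correcte, et le c\oe ur de l'argument est le m\^eme que celui de l'article --- l'ind\'ependance donne $\mu(A^\delta\cap B^\eta)=\mu(A^\delta)\mu(B^\eta)$, de sorte que $\bar A$ n'intervient plus que par des scalaires --- mais l'emballage est sensiblement diff\'erent. L'article ne r\'eduit jamais au cas atomique~: il identifie d'un coup toute l'alg\`ebre des pr\'edicats d\'efinissables aux fonctions continues sur le compact $\mathfrak P\subseteq[0,1]^{\{0,1\}^{<\N}}$ des donn\'ees de mesures de cellules (via l'application $\Phi$ de l'exemple~\ref{ex: approxi oligo pour malg}), constate que par ind\'ependance $\Phi(A_1,B_1,A_2,B_2,\dots)$ d\'epend contin\^ument de $\Phi(\bar B)$ seul, donc du type de $\bar B$, et conclut par la dualit\'e de Gelfand (th\'eor\`eme~\ref{thm:dualité}). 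Vous proc\'edez au contraire syntaxiquement~: \'elimination des quantificateurs (th\'eor\`eme~\ref{thm:malg elim}), stabilit\'e de la propri\'et\'e par connecteurs et limites uniformes, puis calcul explicite sur les formules atomiques $m(t)$ mises sous forme normale disjonctive. Les deux routes reposent in fine sur la m\^eme analyse de l'exemple~\ref{ex: approxi oligo pour malg}; la v\^otre a l'avantage de produire un t\'emoin explicite et \emph{sans quantificateur} $\mathsf d_{\bar A,\varphi}=\sum_{(\delta,\eta)\in S}c_\delta\varphi_\eta$ (au moins pour $\varphi$ atomique), au prix d'un \'echafaudage de cl\^oture \`a v\'erifier; celle de l'article traite tous les pr\'edicats d'un coup et exhibe plus directement le ph\'enom\`ene de stabilit\'e (la d\'ependance continue en le type de $\bar B$) qui est celui qui se g\'en\'eralise au cadre d'Ibarlucía. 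Seul point \`a expliciter dans votre r\'edaction~: la densit\'e des formules sans quantificateur \emph{dont les variables libres sont parmi $\bar x,\bar y$} dans $\mathfrak P^{\mathcal L,T}_{(\bar x,\bar y)}$, qui s'obtient en substituant par exemple $x_1$ aux variables parasites d'une approximation quelconque, la semi-norme $\norm\cdot_T$ ne pouvant que d\'ecro\^itre.
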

\begin{proof}
	On a besoin de revisiter l'exemple~\ref{ex: approxi oligo pour malg}.
	Pour $A\in\MAlg(X,\mu)$ on note $A^0=A$ et $A^1=X\setminus A$.
	On note $\{0,1\}^{<\N}$ l'ensemble dénombrable des suites finies d'éléments de $\{0,1\}$. Un élément de
	$\{0,1\}^{<\N}$ est de la forme
	$\delta=(\delta(1),\dots,\delta(n))$ pour un unique $n\in\N$ appelé sa \emph{longueur}, noté $\abs\delta$.
	Pour une suite $(A_i)_{i\in\N}$
	d'éléments de $\MAlg(X,\mu)$, on note alors 
	$$\Phi([(A_i)_{i\in\N}])=
	(\mu(A_1^{\delta(1)}\cap\cdots\cap A_{\abs\delta}^{\delta(\abs{\delta})}))_{\delta\in\{0,1\}^{<\N}}
	\in [0,1]^{\{0,1\}^{<\N}}.$$
	Alors $\Phi:\Aut(X,\mu)\bbslash\MAlg(X,\mu)^\N\to [0,1]^{\{0,1\}^{<\N}}$ est continue, injective d'après la preuve de l'exemple~\ref{ex: approxi oligo pour malg}, 
	et $\Aut(X,\mu)\bbslash\MAlg(X,\mu)^\N$ est compact d'après la remarque~\ref{rmq: action approx cocompacte sur Xn}.
	L'image de $\Phi$ est donc un compact 
	de $[0,1]^{\{0,1\}^{<\N}}$ que l'on note $\mathfrak P$. \emph{L'application~$\Phi$ nous fournit
	alors une identification entre les prédicats définissables (vus comme fonctions continues
	$\Aut(X,\mu)$-invariantes sur $\MAlg(X,\mu)^\N$) et les fonctions continues sur $\mathfrak P$.}

	Soit $\tilde \varphi$ la fonction continue sur $\mathfrak P$ obtenue via cette identification,
	alors par définition pour tous $\bar A=(A_i)_{i\in\N}\in \MAlg(X,\mu)^\N$ et 
	$\bar B=(B_i)_{i\in\N}\in \MAlg(X,\mu)^\N$ on a 
	$$\varphi^{\MAlg(X,\mu)}(\bar A,\bar B)=\tilde\varphi(\Phi(A_1,B_1,A_2,B_2,\dots)).$$
	Pour une suite finie $\delta\in\{0,1\}^{<\N}$ notons $\delta_1$ la suite extraite en ne prenant
	que les termes impairs de $\delta$, et $\delta_2$ la suite extraite en ne prenant que les termes
	pairs de $\delta$.
	Alors si $\bar A\in M_1^\N$ et $\bar B\in M_2^\N$, on a par indépendance
	\[
	\Phi(A_1,B_1,A_2,B_2,\dots)_\delta=
	\mu(A_1^{\delta_1(1)}\cap A_2^{\delta_1(2)}\cdots\cap A_{\abs{\delta_1}}^{\delta_1(\abs{\delta_1})})
	\mu(B_1^{\delta_2(1)}\cap B_2^{\delta_2(2)}\cdots\cap B_{\abs{\delta_2}}^{\delta_2(\abs{\delta_2})})
	\]
	En particulier, si $\bar A\in M_1^\N$ est fixé, la fonction 
	$M_2^\N\to \R$ qui à $\bar B$ 
	associe le réel $\tilde\varphi(\Phi(A_1,B_1,A_2,B_2,\dots))$ est une fonction qui dépend continûment 
	du type de $\bar B$,
	et c'est donc l'interprétation dans $M_2$ d'un prédicat définissable d'après le théorème~\ref{thm:dualité}
	(en des termes plus dynamiques, c'est une fonction $\Aut(M_2)$-invariante, donc l'interprétation d'un
	prédicat définissable d'après le début de cette section).	
\end{proof}
	\begin{rema} 
		Notons $()$ la suite vide, de longueur $0$. Si $\delta=(\delta(1),\dots,\delta(n))$ et 
		$\epsilon\in\{0,1\}$, on note $\delta\smallfrown\epsilon$ la suite finie $(\delta(1),\dots,\delta(n),\epsilon)$.
		On peut vérifier
	que l'ensemble $\mathfrak P$ défini ci-dessus
	est le sous-ensemble fermé du compact $[0,1]^{\{0,1\}^{<\N}}$ formé des familles
	$(p_\delta)_{\delta\in\{0,1\}^{<\N}}\in [0,1]^{\{0,1\}^{<\N}}$
	telles que $t_{()}=1$ 
        et pour tout $\delta\in\{0,1\}^{<\N}$,
	$$p_{\delta}=p_{\delta\smallfrown0}+p_{\delta\smallfrown1}.$$
	Cet ensemble s'identifie naturellement
	à l'espace des mesures de probabilité sur les boréliens de $\{0,1\}^\N$ : on construit une telle mesure en assignant à chaque ouvert-fermé 
	$$N_\delta\coloneq\{(x_i)\in\{0,1\}^\N\colon (x_1,\dots,x_{\abs\delta})=\delta\}$$ la mesure $p_\delta$.
	Ceci concrétise la remarque~\ref{rmq:probas sur Cantor}.
\end{rema}

Rappelons qu'on a fixé une action de $\mathbb F_2=\la a,b\ra$ sur $(X,\mu)$ donnée par: 
pour tout $x\in X$, $(n,g)\in\N\times\mathbb F_2$ et $\gamma\in\mathbb F_2$,
\[
(\gamma\cdot x)(n,g)=x(n,\gamma\inv g).
\]
Cette dernière induit une action par automorphisme sur $\MAlg(X,\mu)$ en passant au quotient
son action sur les boréliens de $X$ donnée par\footnote{On peut
remarquer que l'action naturelle est en fait une action à droite, nous sollicitons donc l'indulgence des
puristes pour ce choix peu fonctoriel.} : pour tout $\gamma\in\mathbb F_2$, pour tout $A\subseteq X$
borélien,
$\gamma A=\{\gamma\cdot x\colon x\in A\}$.
On vérifie alors que pour tout $\gamma\in\mathbb F_2$, et tout $B\in \MAlg(Y,\nu)$,
$$\gamma (f_{g}^* B)=f_{\gamma g}^*B.$$
En particulier $\gamma M_g=M_{\gamma g}$ pour tous $\gamma,g\in\mathbb F_2$. On identifie $\mathbb F_2$
au sous-groupe de $\Aut(X,\mu)$ donné par cette action.
Finissons ces préliminaires avec un ersatz du corollaire~\ref{coro:closure}.

\begin{prop}\label{prop: adhérence Aut}
	L'adhérence de $\Aut(X,\mu)$ dans $\MAlg(X,\mu)^{\MAlg(X,\mu)}$ est l'ensemble 
	des plongements élémentaires $\MAlg(X,\mu)\to\MAlg(X,\mu)$ et s'identifie naturellement
	au complété de $\Aut(X,\mu)$ pour sa structure uniforme gauche.
\end{prop}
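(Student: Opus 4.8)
The plan is to establish two inclusions, plus the identification with the left completion. First, every element of $\Aut(X,\mu)$ is an automorphism of $\MAlg(X,\mu)$, hence an isomorphism, hence an elementary embedding; and the set of elementary embeddings $\MAlg(X,\mu)\to\MAlg(X,\mu)$ is closed in $\MAlg(X,\mu)^{\MAlg(X,\mu)}$ for the topology of pointwise convergence. Indeed, by Proposition~\ref{prop: plongements comme type}, fixing a dense sequence $(\alpha_i)_{i\in\N}$ in $\MAlg(X,\mu)$ with type $p=\tp((\alpha_i))$, the elementary embeddings correspond exactly to the sequences $(\beta_i)_{i\in\N}$ with $\tp((\beta_i))=p$; and the set of such sequences is closed in $\MAlg(X,\mu)^\N$ because $\tp\colon\MAlg(X,\mu)^\N\to S_\omega(T)$ is continuous (Theorem~\ref{thm:dualité}) and $\{p\}$ is closed in the Hausdorff space $S_\omega(T)$. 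Since the map $\rho\mapsto(\rho(\alpha_i))_i$ is a homeomorphism onto its image (again by density in a complete space), closedness transfers back. This gives $\overline{\Aut(X,\mu)}\subseteq\{\text{elementary embeddings}\}$.

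For the reverse inclusion, the plan is to approximate a given elementary embedding $\rho\colon\MAlg(X,\mu)\to\MAlg(X,\mu)$ by automorphisms. Given $A_1,\dots,A_n\in\MAlg(X,\mu)$ and $\epsilon>0$, I would apply elementarity: since $\rho$ preserves all formulas, and in particular the quantifier-free ones (Theorem~\ref{thm:malg elim} tells us $T$ eliminates quantifiers, but we only need the quantifier-free part here), we have $\tp_{\qf}(\rho(A_1),\dots,\rho(A_n))=\tp_{\qf}(A_1,\dots,A_n)$. By the argument inside Example~\ref{ex: approxi oligo pour malg} (the bi-Lipschitz injection $\Phi$ together with approximate homogeneity, Proposition~\ref{prop: approx homo}), two tuples with the same quantifier-free type lie in the same closure of $\Aut(X,\mu)$-orbit; concretely, $(\mu(A_1^{\delta(1)}\cap\cdots\cap A_n^{\delta(n)}))_\delta=(\mu(\rho(A_1)^{\delta(1)}\cap\cdots\cap\rho(A_n)^{\delta(n)}))_\delta$, so Proposition~\ref{prop: approx homo} furnishes $g\in\Aut(X,\mu)$ with $\mu(g(A_i)\bigtriangleup\rho(A_i))<\epsilon$ for all $i$. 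Hence $\rho$ is in the closure of $\Aut(X,\mu)$, completing the equality.

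Finally, for the identification with the left completion $\widehat{\Aut(X,\mu)}_{\mathcal L}$: the uniform structure induced on $\Aut(X,\mu)$ as a subset of $\MAlg(X,\mu)^{\MAlg(X,\mu)}$ (equivalently of $\MAlg(X,\mu)^\N$ via the dense sequence $(\alpha_i)$) is precisely the left uniform structure, as noted in the remark following the proposition on automorphism groups of discrete structures and as witnessed by the explicit left-invariant distance $d_{\mathcal L}(g,h)=\sum_i 2^{-i}\min(1,d_\mu(g\cdot\alpha_i,h\cdot\alpha_i))$. The closure of $\Aut(X,\mu)$ in the complete space $\MAlg(X,\mu)^{\MAlg(X,\mu)}$ (complete since $\MAlg(X,\mu)$ is complete and a pointwise limit of $1$-Lipschitz maps is $1$-Lipschitz) is therefore a completion of $(\Aut(X,\mu),d_{\mathcal L})$, i.e.\ it is canonically $\widehat{\Aut(X,\mu)}_{\mathcal L}$.

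**Main obstacle.** The delicate point is the reverse inclusion, and specifically invoking approximate homogeneity correctly: one must check that equality of quantifier-free types of finite tuples really amounts to equality of the full vector of atom-measures $\mu(A_1^{\delta(1)}\cap\cdots\cap A_n^{\delta(n)})$ — this is exactly what the quantifier-free formulas $\varphi_\delta$ of the proof of Theorem~\ref{thm:malg elim} compute — and then that Proposition~\ref{prop: approx homo}, applied to the common refinement partition indexed by $\delta\in\{0,1\}^n$, yields an automorphism moving $(A_i)$ arbitrarily close to $(\rho(A_i))$. Everything else (closedness, the completion identification) is soft topology once Proposition~\ref{prop: plongements comme type} and Theorem~\ref{thm:dualité} are in hand.
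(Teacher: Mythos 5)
Votre démonstration est correcte et suit essentiellement la même démarche que celle du texte : inclusion de l'adhérence dans le fermé des plongements élémentaires, puis approximation d'un plongement élémentaire par des automorphismes via l'injectivité de $\Aut(X,\mu)\bbslash \MAlg(X,\mu)^n\to S_n(T)$ (vous déroulez explicitement la preuve du théorème~\ref{thm:malg elim} — types sans quantificateur, injection $\Phi$ et proposition~\ref{prop: approx homo} — là où le texte se contente de citer ce théorème), et enfin identification du complété à gauche par la distance $\sum_i 2^{-i}d_\mu(\rho_1(\alpha_i),\rho_2(\alpha_i))$. Rien à redire.
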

\begin{proof}
	Il est clair que l'adhérence de $\Aut(X,\mu)$ est incluse dans l'espace fermé des plongements élémentaires.
	Réciproquement, si $\rho:\MAlg(X,\mu)\to\MAlg(X,\mu)$ est un plongement élémentaire
    et si on se 
	donne $A_1,\dots,A_n\in\MAlg(X,\mu)$, alors
	$\tp(\rho(A_1),\dots,\rho(A_n))=\tp(A_1,\dots, A_n)$ et donc d'après le théorème~\ref{thm:malg elim}
	on a que $(\rho(A_1),\dots,\rho(A_n))$ est dans l'adhérence de la $\Aut(X,\mu)$-orbite
	de $(A_1,\dots,A_n)$, ce qui conclut la preuve.
	L'identification avec le complété $\widehat{\Aut(X,\mu)}_{\mathcal L}$ provient du fait que
	si on fixe $(A_i)_{i\in\N}$ dense dans $\MAlg(X,\mu)$, la distance sur l'espace des plongements
	élémentaires
	$d(\rho_1,\rho_2)=\sum_{i\in\N}2^{-i}d_\mu(\rho_1(A_i),\rho_2(A_i))$ est complète
	et se restreint à $\Aut(X,\mu)$ en une distance invariante à gauche qui induit sa topologie.
\end{proof}
\begin{rema}
	Rappelons que puisque $T$ élimine les quantificateurs, tous les plongements sont élémentaires,
	et on peut donc enlever l'adjectif ``élémentaire'' dans la proposition précédente.
\end{rema}

\begin{proof}[Démonstration du théorème~\ref{THM:MAINTER}]
	Soit $\pi: \Aut(X,\mu)\to \mathcal O(\mathcal H)$ une représentation orthogonale sans vecteurs invariants,
	on veut montrer que $\pi$ se restreint à $\mathbb F_2$ en une représentation 
	contenant sa représentation régulière.
	Comme toute représentation orthogonale est somme directe de sous-représentations cycliques, 
	il suffit de montrer le résultat lorsque $\pi$ est cyclique, c'est-à-dire lorsqu'il existe 
	$\xi\in\mathcal H$
	tel que $\pi(\Aut(X,\mu))\xi$ engendre un sous-espace dense de $\mathcal H$.
	Supposons donc $\pi$ cyclique et fixons un tel vecteur $\xi\in\mathcal H$ de norme $1$.
	
	Notons $\Iso(\mathcal H)$ le semi-groupe des isométries linéaires (non nécessairement surjectives)
	de $\mathcal H$ dans $\mathcal H$. On 
	identifie par 
	la proposition précédente le complété à gauche de $\Aut(X,\mu)$ au 
	semi-groupe $\mathcal E(M,M)$ des plongements élémentaires 
	$M\to M$ où $M=\MAlg(X,\mu)$.
	La représentation $\pi$ s'étend par uniforme continuité en une 
	application continue que l'on note toujours $\pi:\mathcal E(M,M)\to \Iso(\mathcal H)$.
	Pour $g\in\mathbb F_2$, notons $\mathcal E(M,M_g)$ le sous-semi-groupe des plongements élémentaires de $M$ 
	dont l'image est contenue dans~$M_g$. Avant de continuer la preuve, remarquons que $\mathcal E(M,M_g)$ 
	est non vide, et que pour tout $\gamma\in \mathbb F_2$, comme $\gamma M_g=M_{\gamma g}$ on a
	$\gamma \mathcal E(M,M_g)=\mathcal E(M,M_{\gamma g})$. On définit enfin le sous-espace de Hilbert
	$$\mathcal H_g=\overline{\Vect\left(\pi(\mathcal E(M,M_g))\xi\right)},$$
	et on a donc $\pi(\gamma)\mathcal H_g=\mathcal H_{\gamma g}$. Il suffit alors de montrer que pour 
	tous $g_1,g_2\in \mathbb F_2$ distincts, $\mathcal H_{g_1}$ est orthogonal à $\mathcal H_{g_2}$, 
	ce qui va reposer de manière cruciale sur le fait que $M_{g_1}$ et $M_{g_2}$ sont indépendants.
	Ce qui suit est une reformulation de la preuve de la seconde
	partie de la proposition 4.1 de \textcite{ibarluciaInfinitedimensionalPolishGroups2021} dans notre
	cadre restreint.\\
	
	Fixons donc $g_1\neq g_2\in \mathbb F_2$, supposons par l'absurde que $\mathcal H_{g_1}$ ne soit
	pas orthogonal à~$\mathcal H_{g_2}$. On dispose alors d'un élément $\rho_0\in \mathcal E(M,M_{g_1})$
	tel que la projection orthogonale~$\eta$ de $\pi(\rho_0)\xi$ sur $\mathcal H_{g_2}$ ne soit pas nulle.
	Alors comme $\pi$ n'a pas de vecteurs invariants et $\xi$ est cyclique, 
	on trouve	$\epsilon>0$, et $T_1,T_2\in \Aut(X,\mu)$  tels que 
	\begin{equation}\label{eq:contrad}
	\abs{\la \eta-\pi(T_1)\eta, \pi(T_2)\xi\ra}>\epsilon
	\end{equation}
	
	Il est maintenant temps d'utiliser l'identification entre $\mathcal E(M,N)$ et l'espace des réalisations
	d'un $\omega$-type dans $N$ donnée par la proposition~\ref{prop: plongements comme type}; 
	pour rendre cette dernière plus lisible et plus proche des notations de 
	la proposition, on note désormais par des minuscules les éléments 
	de nos algèbres de mesure. Fixons donc $\bar \alpha=(\alpha_i)_{i\in\N}$ dense dans~$M$, soit
	$p\coloneq\tp(\bar\alpha)$.
	Pour chaque modèle $N$ de $T$,
	l'ensemble des suites $\bar a\in N^\omega$ tels que $\tp(\bar a)=p$ s'identifie aux
	plongements élémentaires de $M$ dans $N$ via $\rho\mapsto \rho(\bar \alpha)$. Pour chaque $\bar a\in N^\N$
	tel que $\tp(\bar a)=p$, on note $\rho_{\bar a}$ le plongement élémentaire correspondant, de sorte que 
	$\rho_{\bar a}(\bar \alpha)=\bar a$. Remarquons que pour tout $\bar a\in M^\N$ tel que $\tp(\bar a)=p$,
	 tout $T\in\Aut(X,\mu)$, on a
	$\rho_{T\bar a}(\bar \alpha)=T\bar a=T\rho_{\bar a}(\bar \alpha)$ donc $\rho_{T\bar a}=T\rho_{\bar a}$
	par densité de $\bar \alpha$.
	
   	Pour un modèle $N$ de $T$, on note $p(N)$ l'ensemble des réalisations de $p$
	dans $N$. Considérons la fonction 
	$\tilde \varphi: (\bar a,\bar b)\in p(M)^2\mapsto \la \pi(\rho_{\bar a})\xi, \pi(\rho_{\bar b})\xi\ra$.
	C'est une fonction continue $\Aut(X,\mu)$-invariante, donc elle définit une fonction continue sur l'espace
	fermé
	$[p(M)\times p(M)]\subseteq \Aut(X,\mu)^\N\bbslash M^\N$. Par le lemme de Tietze--Urysohn, elle se prolonge 
	en une
	fonction continue sur le compact 
	$\Aut(X,\mu)^\N\bbslash M^\N$. On note $\varphi(\bar x,\bar y)$ le prédicat définissable
	correspondant, on a alors : pour tous $\bar a, \bar b\in p(M)$
	\begin{equation}\label{eq:cacestphi}	
	\varphi^M(\bar a,\bar b)=\la \pi(\rho_{\bar a})\xi, \pi(\rho_{\bar b})\xi\ra.
	\end{equation}
	On va maintenant approcher le vecteur $\eta$ par une combinaison linéaire afin d'aboutir à une 
	contradiction.
	On fixe des plongements $\rho_1,\dots,\rho_n\in \mathcal E(M,M_{g_2})$ et des réels
	$\lambda_1,\dots\lambda_n$ tels que 
	\begin{equation}\label{eq: close to eta}
	\norm{\eta-\sum_{i=1}^n \lambda_i\pi(\rho_i)\xi}\leq\frac\epsilon 4.
	\end{equation}

	Par le même raisonnement qui a mené à la définition de $\varphi$, on trouve un prédicat définissable 
	$\psi(\bar x_1,\dots,\bar x_n,\bar y)$ tel que pour tous $\bar a_1,\dots,\bar a_n,\bar b\in p(M)$, on a 
	$$
	\psi^{M}(\bar a_1,\dots,\bar a_n, \bar b)=
	\la \lambda_1\pi(\rho_{\bar a_1})\xi+\cdots+\lambda_n\pi(\rho_{\bar a_n})\xi, \pi(\rho_{\bar b})\xi\ra.
	$$
	Remarquons que comme $M_{g_2}$ est une sous-structure élémentaire de $M$, on a $p(M_{g_2})\subseteq p(M)$
	et un cas particulier de l'équation précédente est donc que pour tous $\bar a_1,\dots,\bar a_n,\bar b\in p(M_{g_2})$
	\begin{equation}\label{cacestpsi}
	\psi^{M_{g_2}}(\bar a_1,\dots,\bar a_n, \bar b)=
	\la \lambda_1\pi(\rho_{\bar a_1})\xi+\cdots+\lambda_n\pi(\rho_{\bar a_n})\xi, \pi(\rho_{\bar b})\xi\ra.
	\end{equation}
	Soit $\bar c=\rho_0(\bar \alpha)\in M_{g_1}^\N$, de sorte que $\rho_0=\rho_{\bar c}$. D'après le lemme~\ref{lem:stabilite}, on dispose d'un prédicat définissable $\mathsf d_{\bar c,\varphi}(\bar y)$ tel que
	$\varphi^M(\bar c,\bar b)= \mathsf d_{\bar c,\varphi}^{M_{g_2}}(\bar b)$ pour tous $\bar b\in M_{g_2}^\N$.
	
	Remarquons que pour tout $\bar b\in p(M_{g_2})$
	on a 
	$$\varphi^M(\bar c,\bar b)=
	\la \pi(\rho_{\bar c})\xi, \pi(\rho_{\bar b})\xi\ra=\la \eta, \pi(\rho_{\bar b})\xi\ra$$
	puisque $\eta$ est la projection orthogonale sur $\mathcal H_{g_2}$ de $\pi(\rho_{\bar c})\xi=\pi(\rho_0)\xi$.
	Ainsi, par l'inégalité de Cauchy--Schwarz et l'inéquation~\eqref{eq: close to eta} on a que 
	pour tout $\bar b\in M_{g_2}^\N$ tel que $\tp(\bar b)=p$,
	$$
	\absBig{\Bigl\langle \sum_{i=1}^n\lambda_i\pi(\rho_i)\xi,\pi(\rho_{\bar b})\xi\Bigr\rangle-
		\la \pi(\rho_{\bar c})\xi, \pi(\rho_{\bar b})\xi\ra}\leq \frac \epsilon 4.
	$$ 
	On reformule cette inégalité une dernière fois avant le tour de magie : notons 
	$\bar a_i=\rho_i(\bar\alpha)\in M_{g_2}^\N$, alors en utilisant~\eqref{eq:cacestphi} et~\eqref{cacestpsi},
	on a que pour tout $\bar b\in M_{g_2}^\N$ tel que $\tp(\bar b)=p$,
	$$
	\abs{
	\psi^{M_{g_2}}(\bar a_1,\dots,\bar a_n,\bar b)- \mathsf d_{\bar c,\varphi}^{M_{g_2}}(\bar b)
	}
	\leq \frac \epsilon 4.
	$$
	D'après le théorème~\ref{thm: real defin xmu} et le théorème~\ref{thm:principal est real partout},
	on dispose d'un prédicat définissable $d(\bar x,p)$ qui s'interprète dans tout modèle de $T$ 
	comme la distance à l'ensemble des réalisations de $p$, donc le prédicat définissable suivant est bien
	défini d'après la proposition~\ref{prop:quantif sur def}:
	$$
	\kappa(\bar x_1,\dots,\bar x_n)\coloneq\sup_{d(\bar y,p)=0} 	\abs{
		\psi(\bar x_1,\dots,\bar x_n,\bar y)- \mathsf d_{\bar c,\varphi}(\bar y)
	}.
	$$
	L'inégalité que nous venions d'obtenir se reformule alors en le fait
	que $$\kappa^{M_{g_2}}(\bar a_1,\dots,\bar a_n)\leq \frac \epsilon 4.$$
	Mais alors, comme l'inclusion de $M_{g_2}$ dans $M$ est élémentaire,
	on obtient que 
	$$
	\kappa^M(\bar a_1,\dots,\bar a_n)\leq \frac \epsilon 4,$$
	ce qui veut dire que pour tout $\bar d\in p(M)$,
	on a
		$$
	\abs{
		\psi^{M}(\bar a_1,\dots,\bar a_n,\bar d)- \mathsf d_{\bar c,\varphi}^{M}(\bar d)
	}
	\leq \frac \epsilon 4.
	$$
	Or $\mathsf d_{\bar c,\varphi}^{M}$ est un prédicat définissable, en particulier 
	il est $\Aut(X,\mu)$-invariant. D'autre part 	
	$\psi^{M}(\bar a_1,\dots,\bar a_n,\bar d)=	\la \sum_{i=1}^n\lambda_i\pi(\rho_i)\xi,\pi(\rho_{\bar d})\xi\ra$
	donc par inégalité triangulaire pour tout $T\in\Aut(X,\mu)$,
	\[\absBig{\laBig \sum_{i=1}^n\lambda_i\pi(\rho_i)\xi,\pi(\rho_{\bar d})\xi\raBig - 
		\laBig \sum_{i=1}^n\lambda_i\pi(\rho_i)\xi,\pi(\rho_{T\inv\bar d})\xi\raBig}\leq \frac \epsilon 2.\]
	Notons $\tilde \eta=\sum_{i=1}^n\lambda_i\pi(\rho_i)\xi$, qui d'après~\eqref{eq: close to eta} 
	est $\epsilon/4$-proche de $\eta$. L'inégalité précédente se réécrit
	\[\abs{\la \tilde \eta,\pi(\rho_{\bar d})\xi\ra - 
		\la \tilde \eta,\pi(\rho_{T\inv\bar d})\xi\ra}\leq \frac \epsilon 2.\]
	
Or $\rho_{T\inv\bar d}=T\inv\rho_{\bar d}$ et $\pi_{\restriction\Aut(X,\mu)}$ est orthogonale, donc on trouve que pour tout $T\in\Aut(X,\mu)$
et tout $\bar d\in p(M)$,
	\[\abs{\la \tilde \eta,\pi(\rho_{\bar d})\xi\ra - 
		\la \pi(T)\tilde \eta,\pi(\rho_{\bar d})\xi\ra}\leq \frac \epsilon 2.\]
	Autrement dit, pour tout  $\rho\in\mathcal E(M,M)$ on a 
	\[\abs{\la \tilde \eta-\pi(T)\tilde \eta,\pi(\rho)\xi\ra}\leq \frac \epsilon 2,\]
	En particulier, en reprenant les notations de l'équation~\eqref{eq:contrad}, pour $\rho=T_2$
	et $T=T_1$ on a
	\[\abs{\la \tilde \eta-\pi(T_1)\tilde \eta,\pi(T_2)\xi\ra}\leq \frac \epsilon 2.
	\]
	ce qui au vu de~\eqref{eq:contrad}, du fait que $\norm{\eta-\tilde\eta}\leq \frac \epsilon 4$, et de l'inégalité
	de Cauchy--Schwarz, est la contradiction attendue.
\end{proof}

Pour finir, donnons des éléments de preuve du théorème~\ref{thm:mainbis}. La difficulté 
est que les $(\mathcal H_{g})$ précédents ne nous permettent pas de recouvrir complètement $\mathcal H$,
et on a a priori aucune idée de ce à quoi ressemble la représentation $\pi_{\restriction \mathbb F_2}$
en dehors de $\bigoplus_{g\in\mathbb F_2}\mathcal H_g$.
Pour corriger ce problème, on considère pour chaque partie finie $F\Subset \mathbb F_2$ la sous-structure 
$M_F$ des parties $f_F$-mesurables, où $f_F:X\to \{0,1\}^{\N\times F}$
associe à $x\in X$ la suite $(x(n,f))_{n\in\N, f\in F}$.
Notons qu'avec nos notations précédentes $M_{g}$ devient $M_{\{g\}}$, et que pour tout $\gamma\in\mathbb F_2$
et tout $F\Subset\mathbb F_2$ on a $\gamma M_F=M_{\gamma F}$. 
En particulier, si on note  $$\mathcal H_F=\overline{\Vect\left(\pi(\mathcal E(M,M_F))\xi\right)},$$
alors $\mathcal H_{\gamma F}=\pi(\gamma)\mathcal H_F$.
La réunion des $M_F$ est dense
dans $M$, ce qui permet de montrer que 
$\Aut(X,\mu)\subseteq \overline{(\bigcup_{F\Subset\mathbb F_2}\mathcal E(M,M_F))}$, et donc que 
$\mathcal H=\overline{\Vect\left(\bigcup_{F\Subset \mathbb F_2} \mathcal H_F\right)}$.
La preuve précédente donne plus généralement que $\mathcal H_{F_1}\perp\mathcal H_{F_2}$
dès lors que $F_1$ et $F_2$ sont deux parties finies disjointes de $\mathbb F_2$. La clé
est qu'on a la version \emph{relative} plus générale suivante, où on note
$\mathcal H_1\perp_{\mathcal H_3}\mathcal H_2$ lorsque 
$\mathcal H_1\ominus\mathcal H_3\perp \mathcal H_2\ominus\mathcal H_3$ (le sous-espace $\mathcal H_i\ominus\mathcal H_j$
étant l'orthogonal de $\mathcal H_j\cap \mathcal H_i$ au sein de $\mathcal H_i$).
\begin{lemm}[{\cite[Lem.~4.1]{ibarluciaInfinitedimensionalPolishGroups2021}}]\label{lem:ortho full}
	Pour tous $F_1,F_2\Subset \mathbb F_2$, on a $\mathcal H_{F_1}\perp_{\mathcal H_{F_1\cap F_2}}\mathcal H_{F_2}$.
\end{lemm}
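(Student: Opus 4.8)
The plan is to split on whether $F_1\cap F_2$ is empty. If $F_1\cap F_2=\emptyset$ then $\mathcal H_{F_1\cap F_2}=\mathcal H_\emptyset=\{0\}$ --- there is no elementary embedding of $M$ into the two-element algebra $M_\emptyset$ --- so the statement is just $\mathcal H_{F_1}\perp\mathcal H_{F_2}$, the disjoint-support orthogonality recorded just before the lemma, which comes from the generalization of the proof of Theorem~\ref{THM:MAINTER}. So I assume $F_1\cap F_2\neq\emptyset$, so that $N\coloneq M_{F_1\cap F_2}$ is isomorphic to $M$, in particular a model of $T$, with $N\preceq M_{F_1}$, $N\preceq M_{F_2}$ and $N\preceq M$ (elimination of quantifiers). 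Since $F_1\cap F_2\subseteq F_i$ we have $\mathcal E(M,M_N)\subseteq\mathcal E(M,M_{F_i})$, hence $\mathcal H_N\coloneq\mathcal H_{F_1\cap F_2}\subseteq\mathcal H_{F_1}\cap\mathcal H_{F_2}$. Writing $P_N$ for the orthogonal projection onto $\mathcal H_N$, one has $\mathcal H_{F_i}\ominus\mathcal H_N=(\mathrm{Id}-P_N)\mathcal H_{F_i}$ and $\la(\mathrm{Id}-P_N)v_1,(\mathrm{Id}-P_N)v_2\ra=\la v_1,v_2\ra-\la P_Nv_1,P_Nv_2\ra$, so the lemma reduces to the identity $\la v_1,v_2\ra=\la P_Nv_1,P_Nv_2\ra$ for $v_1\in\mathcal H_{F_1}$, $v_2\in\mathcal H_{F_2}$; by density and linearity it suffices to treat $v_i=\pi(\rho_i)\xi$ with $\rho_i\in\mathcal E(M,M_{F_i})$.

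Fix a dense sequence $\bar\alpha$ in $M$, put $p\coloneq\tp(\bar\alpha)$, and for a model $N'$ of $T$ identify $\mathcal E(M,N')$ with the set $p(N')$ of realizations of $p$ in $N'$ via $\rho\mapsto\rho(\bar\alpha)$ (Proposition~\ref{prop: plongements comme type}), writing $\rho_{\bar b}$ for the embedding attached to $\bar b$. As in the proof of Theorem~\ref{THM:MAINTER}, let $\varphi(\bar x,\bar y)$ be a definable predicate with $\varphi^M(\bar a,\bar b)=\la\pi(\rho_{\bar a})\xi,\pi(\rho_{\bar b})\xi\ra$ for $\bar a,\bar b\in p(M)$. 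Now fix $\rho_1\in\mathcal E(M,M_{F_1})$, set $\bar c\coloneq\rho_1(\bar\alpha)$, and consider on $p(M_{F_2})$ the functions $g_1(\bar b)=\varphi^M(\bar c,\bar b)=\la\pi(\rho_1)\xi,\pi(\rho_{\bar b})\xi\ra$ and $g_2(\bar b)=\la P_N\pi(\rho_1)\xi,P_N\pi(\rho_{\bar b})\xi\ra$. I would show that each agrees on $M_{F_2}^{\N}$ with the interpretation of a predicate definable over the parameter set $N$. For $g_2$ this is immediate, since $P_N\pi(\rho_1)\xi\in\mathcal H_N$ is a norm-limit of combinations $\sum_j\mu_j\pi(\sigma_j)\xi$ with $\sigma_j\in\mathcal E(M,M_N)$, so $g_2(\bar b)=\la P_N\pi(\rho_1)\xi,\pi(\rho_{\bar b})\xi\ra$ is a uniform limit of the predicates over $N$ given by $\bar y\mapsto\sum_j\mu_j\varphi^M(\sigma_j(\bar\alpha),\bar y)$, whose parameters $\sigma_j(\bar\alpha)$ lie in $N^{\N}$. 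For $g_1$ this is the key input, the relative (over a base) form of Lemma~\ref{lem:stabilite}: since $X=\{0,1\}^{\N\times\mathbb F_2}$ carries a product measure and $(\N\times F_1)\cap(\N\times F_2)=\N\times(F_1\cap F_2)$, the substructures $M_{F_1}$ and $M_{F_2}$ are conditionally independent given the $\sigma$-algebra generated by $f_{F_1\cap F_2}$, so with $\bar c$ fixed the value $\varphi^M(\bar c,\bar b)$ depends continuously only on $\tp(\bar b/N)$, hence defines on $M_{F_2}$ a predicate over $N$ (again a special case of the cited results of Ben Yaacov on stable independence in measure algebras). Call these two predicates $\chi_1(\bar y)$ and $\chi_2(\bar y)$.

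The crucial observation is that $\chi_1$ and $\chi_2$ agree on $p(N)$: if $\bar b\in p(N)$ then $\rho_{\bar b}\in\mathcal E(M,M_N)$, so $\pi(\rho_{\bar b})\xi\in\mathcal H_N$, whence $g_1(\bar b)=\la\pi(\rho_1)\xi,\pi(\rho_{\bar b})\xi\ra=\la P_N\pi(\rho_1)\xi,\pi(\rho_{\bar b})\xi\ra=g_2(\bar b)$. Using the definable distance $d(\bar y,p)$ furnished by Theorems~\ref{thm: real defin xmu} and~\ref{thm:principal est real partout}, Proposition~\ref{prop:quantif sur def} makes the $N$-sentence $\kappa\coloneq\sup_{d(\bar y,p)=0}\bigl|\chi_1(\bar y)-\chi_2(\bar y)\bigr|$ well defined, and the agreement on $p(N)$ gives $\kappa^{N}=0$. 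Since $(N,\vec c)\preceq(M_{F_2},\vec c)$ is elementary over $N$ --- it is elementary in the base language and the named parameters lie in $N\subseteq M_{F_2}$ --- we get $\kappa^{M_{F_2}}=\kappa^{N}=0$, i.e.\ $g_1\equiv g_2$ on $p(M_{F_2})$, equivalently $\la\pi(\rho_1)\xi,\pi(\rho_2)\xi\ra=\la P_N\pi(\rho_1)\xi,P_N\pi(\rho_2)\xi\ra$ for every $\rho_2\in\mathcal E(M,M_{F_2})$; as $\rho_1$ was arbitrary, this is the required identity.

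The main obstacle is making the relative apparatus precise: the relative version of Lemma~\ref{lem:stabilite}, types and definable predicates over the submodel $N$ used as a parameter set, and the relative analogues of Theorems~\ref{thm: real defin xmu}--\ref{thm:principal est real partout} and of Proposition~\ref{prop:quantif sur def}. All of these should follow from the arguments of Sections~\ref{sec:bases}--\ref{sec: definissable} applied to $\Th(M,(n)_{n\in D})$ for $D$ a countable dense subset of $N$: adding constants preserves elimination of quantifiers, and the action of $\Aut(M/N)$ on $M^{\N}$ is still approximately oligomorphic by approximate homogeneity of the measure algebra relative to a subalgebra, so the Gelfand-duality picture and the definability criteria relativize verbatim; the real bookkeeping is keeping track of which objects carry parameters from $N$. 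Note that, unlike the disjoint case and unlike Theorem~\ref{THM:MAINTER}, this argument uses neither a contradiction nor the absence of invariant vectors --- those are confined to the case $F_1\cap F_2=\emptyset$, which is exactly where $N$ fails to be a model of $T$.
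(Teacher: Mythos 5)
Your proof is correct and is essentially the argument the paper has in mind but does not write out: the text only states the lemma and points to the relative stability lemma~\ref{lem:stabilite2} and to $N$-définissabilité, and your reduction to $\la v_1,v_2\ra=\la P_N v_1,P_N v_2\ra$, followed by the comparison of two $N$-definable predicates on $p(N)$ and the transfer $\kappa^N=0\Rightarrow\kappa^{M_{F_2}}=0$ along $(N,(a)_{a\in N})\preceq(M_{F_2},(a)_{a\in N})$, is exactly the intended adaptation of the proof of Theorem~\ref{THM:MAINTER}. The one point to tighten is the definability of $g_2$: the approximants $\bar y\mapsto\sum_j\mu_j\varphi(\sigma_j(\bar\alpha),\bar y)$ converge to $g_2$ uniformly only on $p(M)$ (off $p(M)\times p(M)$ the predicate $\varphi$ is an arbitrary Tietze extension), so $\chi_2$ should be produced only as an $N$-definable predicate agreeing with $g_2$ on $p(M_{F_2})$ --- either by working in the quotient algebra of restrictions to the definable set $p$, which is complete, or by running the $\sup_{d(\bar y,p)=0}$ estimate with each approximant separately and passing to the limit; this is harmless since only the values on $p(M_{F_2})$ enter $\kappa$. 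Your closing remarks are also on target: Proposition~\ref{prop:quantif sur def} and the definable distance $d(\bar y,p)$ relativize to $\Th_{\mathcal L_N}(M)$ with no further input (in particular no relative oligomorphy is actually needed for the steps you use), and the absence of invariant vectors is indeed confined to the disjoint case.
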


Une fois ce lemme établi, on pose pour tout $n\in\N$,
$\mathcal K_n=\overline{\Vect(\bigcup_{\abs F=n}\mathcal H_F)}$,
puis $\mathcal H_n=\mathcal K_n\ominus \mathcal K_{n-1}$, et on montre que la restriction de la représentation
de $\mathbb F_2$
à chaque~$\mathcal H_n$ est un multiple de la représentation régulière en observant que pour toute partie
finie $F\Subset \mathbb F_2$ de taille $n$ et tout $\gamma\in\mathbb F_2\setminus\{1\}$ on a 
$\pi(\gamma)\mathcal H_F\perp_{\mathcal K_{n-1}}\mathcal H_F$. Ceci finit d'établir le théorème~\ref{thm:mainbis}
modulo le lemme ci-dessus.

Ce dernier s'appuie une version relative du lemme~\ref{lem:stabilite},
où l'indépendance relative est celle de la théorie des probabilités. 
Pour l'énoncer on utilise la notion importante de \emph{$A$-définissabilité} dans un modèle $M$ de $T$ complète,
pour une partie $A\subseteq M$ que l'on introduit brièvement.
Étant
donnée $A\subseteq M$ on \emph{enrichit} le langage $\mathcal L$ de notre théorie~$T$ en ajoutant, 
pour chaque $a\in A$,
un symbole de constante $c_a$, interprété dans $M$ comme~$a$.
On note $\mathcal L_A$ le langage obtenu. On travaille alors avec la théorie de $M$ dans 
ce nouveau langage que l'on note $\Th_{\mathcal L_A}(M)$, et on a une notion de prédicat définissable 
dans cette théorie complète, que l'on
appelle prédicat $A$-définissable. Par exemple, si $\varphi(x_1,x_2)$ était un prédicat définissable et $a\in A$
est fixé, $\varphi(x_1,c_a)$ est un prédicat $A$-définissable, interprété dans $M$
comme la fonction $\varphi^M(\cdot,a)$. Une remarque important à faire est que si $N$ est une sous-structure
élémentaire (pour le langage $\mathcal L$) de $M$ qui contient $A$, alors on peut la voir naturellement comme
une sous $\mathcal L_A$-structure de $M$ qui reste élémentaire. Voici donc la version relative
du lemme~\ref{lem:stabilite} qui sert pour la preuve du lemme~\ref{lem:ortho full}.

\begin{lemm}\label{lem:stabilite2}
	Soient $M_1$, $M_2$ et $M_3$ deux sous-structures de $\MAlg(X,\mu)$ qui sont des modèles de $T$, 
	supposons que $M_1$ et $M_2$ sont relativement indépendants au dessus de $M_3$.
	Soit $\varphi(\bar x,\bar y)$ un prédicat définissable. Alors pour tout $\bar a\in M_1^\N$, 
	la fonction $\bar b\in M_2^\N\mapsto \varphi(\bar a,\bar b)$ coïncide avec l'interprétation
	dans $M_2$
	d'un prédicat $M_3$-définissable.
\end{lemm}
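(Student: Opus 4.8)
The plan is to adapt the proof of Lemma~\ref{lem:stabilite}, making two substitutions: ordinary independence becomes conditional independence given the $\sigma$-algebra $M_3$, and the type $\tp(\bar b)$ becomes the type of $\bar b$ \emph{over} $M_3$, i.e.\ its type in the enriched language $\mathcal L_{M_3}$. Set $M\coloneq\MAlg(X,\mu)$ and recall that relative independence over $M_3$ presupposes $M_3\subseteq M_1\cap M_2$. First I would observe that, since $T$ eliminates quantifiers (Theorem~\ref{thm:malg elim}), the submodel $M_2$ is an elementary $\mathcal L$-substructure of $M$, and as it contains $M_3$ it is an elementary $\mathcal L_{M_3}$-substructure of $M$ by the remark preceding the lemma; in particular $M_2$ is a model of $T'\coloneq\Th_{\mathcal L_{M_3}}(M)$. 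Applying Theorem~\ref{thm:dualité} and Remark~\ref{rmq: identification prédicats} to $T'$ and its model $M_2$, a function $M_2^\N\to\R$ coincides with the interpretation in $M_2$ of an $M_3$-definable predicate precisely when it factors through $\bar b\mapsto\tp_{\mathcal L_{M_3}}(\bar b)$ via a continuous function on the compact space $S_\omega(T')$. So the whole task is to check this for $h\colon\bar b\in M_2^\N\mapsto\varphi^M(\bar a,\bar b)$, with $\bar a\in M_1^\N$ fixed.

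Next I would recycle the computation at the end of the proof of Lemma~\ref{lem:stabilite}. Keeping its notation, $\varphi^M(\bar a,\bar b)=\tilde\varphi(\Phi(a_1,b_1,a_2,b_2,\dots))$ for a continuous $\tilde\varphi$ on $\mathfrak P$, and each coordinate of $\Phi(a_1,b_1,a_2,b_2,\dots)$ is $\mu(P_\delta\cap Q_\delta)$ with $P_\delta=a_1^{\delta_1(1)}\cap\cdots\in M_1$ and $Q_\delta=b_1^{\delta_2(1)}\cap\cdots\in M_2$. Because $M_1$ and $M_2$ are relatively independent over $M_3$ in the probabilistic sense — this being what is cited from \cite{benyaacovSchrodingerCat2006} — the events $P_\delta$ and $Q_\delta$ are conditionally independent given $M_3$, so that
\[
\mu(P_\delta\cap Q_\delta)=\int_X \mathbb E[\chi_{P_\delta}\mid M_3]\;\mathbb E[\chi_{Q_\delta}\mid M_3]\,d\mu .
\]
With $\bar a$ fixed, $u_\delta\coloneq\mathbb E[\chi_{P_\delta}\mid M_3]\in\mathrm L^2(X,\mu)$ is a fixed vector, and it is an $\mathrm L^2$-limit of finite sums $\sum_k\beta_k\chi_{C_k}$ with $C_k\in M_3$ and $\beta_k\in\R$.

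The heart of the argument is then that $h$ depends continuously on $\tp_{\mathcal L_{M_3}}(\bar b)$. For a finite sum $\sum_k\beta_k\chi_{C_k}$ as above one has $\int\bigl(\sum_k\beta_k\chi_{C_k}\bigr)\,\mathbb E[\chi_{Q_\delta}\mid M_3]\,d\mu=\sum_k\beta_k\,\mu(Q_\delta\cap C_k)$, and each $\mu(Q_\delta\cap C_k)$ is the value at $\bar b$ of an atomic $\mathcal L_{M_3}$-formula (namely $m\bigl(q_\delta(\bar x)\wedge c_{C_k}\bigr)$, where $q_\delta$ is the term that defines $Q_\delta$), hence a continuous function of $\tp_{\mathcal L_{M_3}}(\bar b)$ on $S_\omega(T')$. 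Since conditional expectation is an $\mathrm L^2$-contraction, $\|\mathbb E[\chi_{Q_\delta}\mid M_3]\|_2\le\|\chi_{Q_\delta}\|_2\le1$ uniformly in $\bar b$, so approximating $u_\delta$ in $\mathrm L^2$ exhibits $\bar b\mapsto\mu(P_\delta\cap Q_\delta)=\int u_\delta\,\mathbb E[\chi_{Q_\delta}\mid M_3]\,d\mu$ as a \emph{uniform} limit of continuous functions of $\tp_{\mathcal L_{M_3}}(\bar b)$ on the compact $S_\omega(T')$, hence as such a function itself. Thus $\bar b\mapsto\Phi(a_1,b_1,a_2,b_2,\dots)$ factors continuously through $\tp_{\mathcal L_{M_3}}(\bar b)$ into the compact $\mathfrak P$, and composing with $\tilde\varphi$ gives the same for $h$, which by the first paragraph finishes the proof.

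I expect the main obstacle to be the bookkeeping of the enriched language rather than any hard estimate: namely pinning down that $M_2$ is a model of $\Th_{\mathcal L_{M_3}}(M)$ and that the dictionary of Section~\ref{sec:pfautxmu} between definable predicates and continuous functions on the relevant orbit/type space transfers verbatim to the $\mathcal L_{M_3}$-setting, together with identifying the model-theoretic relative independence hypothesis with the conditional-expectation factorisation used above. The measure-theoretic ingredients — density of $M_3$-simple functions in $\mathrm L^2$ and the contraction property of $\mathbb E[\,\cdot\mid M_3]$ — are routine.
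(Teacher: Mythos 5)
Votre preuve est correcte et suit exactement la voie que l'article ne fait qu'esquisser : adapter la démonstration du lemme~\ref{lem:stabilite} en remplaçant la factorisation $\mu(P_\delta\cap Q_\delta)=\mu(P_\delta)\mu(Q_\delta)$ par la factorisation conditionnelle via $\mathbb E[\,\cdot\mid M_3]$, puis utiliser la densité des fonctions en escalier $M_3$-mesurables pour voir que $B\mapsto\int_B f\,d\mu$ est $M_3$-définissable. Le détail que vous ajoutez (la contraction $\mathrm L^2$ de l'espérance conditionnelle donnant une limite \emph{uniforme} en $\bar b$, et le transfert du dictionnaire de Gelfand au langage enrichi $\mathcal L_{M_3}$) comble précisément ce que l'article laisse au lecteur.
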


La preuve élémentaire que nous avons fait du lemme~\ref{lem:stabilite} peut s'adapter en utilisant l'espérance conditionnelle
et en s'appuyant sur l'observation
suivante, que l'on montre
par densité des fonctions en escalier : si $N$ est une sous-algèbre de $M=\MAlg(X,\mu)$, et $f:(X,\mu)\to \R$ est une fonction 
$N$-mesurable intégrable,
alors la fonction $A\in M\mapsto \int_A f(x)d\mu(x)$ coïncide avec l'interprétation un prédicat $N$-définissable.

\appendix\section{Le théorème de Ryll-Nardzewski}

\subsection{Va-et-vient}

Cette section forme une partie importante de la preuve du théorème~\ref{thm: Ryll-Nardzewski} de Ryll-Nardzewski que l'on énoncera
dans la dernière section, 
et est une bonne application des outils développés dans la section~\ref{sec: definissable}.

\begin{theo}\label{theo:vaetvient}
	Soit $T$ une théorie complète, supposons que pour tout $m<\omega$, tout $m$-type est isolé. Soient 
	$M$ et $N$ deux modèles \emph{séparables} de $T$. Soit $n<\omega$, soient 
	$\bar a\in M^n$ et $\bar b\in N^n$
	tels que $\tp(\bar a)=\tp(\bar b)$, et soit $\epsilon>0$.
	Alors il existe un isomorphisme $\varphi: M\to N$ tel que
	$d^N(\varphi(\bar a),\bar b)<\epsilon$.
\end{theo}
\begin{rema}
	Le théorème reste vrai pour $n=\omega$ puisque la distance sur un produit infini n'est pas très 
	sensible à ce qui se passe dans les grandes coordonnées.\end{rema}
\begin{rema}
	Pour $N=M$, la conclusion du théorème se reformule en disant que l'application naturelle
	$\Aut(M)\bbslash M^n\to S_n(T)$ est injective. Dans ce cas, on dit aussi que la structure $M$ 
	est \emph{approximativement homogène}.
\end{rema}
\begin{proof}[Démonstration du théorème~\ref{theo:vaetvient}]
On va faire un argument de va-et-vient
utilisant le point crucial suivant, qui par symétrie reste valable
en échangeant les rôles de $M$ et $N$.

\begin{lemm}\label{lem:va-et-vient}
	Soient $\bar a\in M^n$, $\bar b\in N^n$ tels que $\tp(\bar a)=\tp(\bar b)$. 
	Soit $c\in M$ et $\epsilon>0$.
	Alors il existe $c'\in N$ et $\bar b'\in N^n$ tel que $\tp(\bar a,c)=\tp(\bar b',c')$
	et $d^N(\bar b,\bar b')<\epsilon$.	
\end{lemm}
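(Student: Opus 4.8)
The plan is to find the new element $c'$ as the realization of a suitable type in $N$, exploiting the hypothesis that all types over $T$ are isolated (so that types are realized in every model and, crucially, the distance to their realization sets is a definable predicate). First I would consider the type $q \coloneq \tp(\bar a, c) \in S_{n+1}(T)$. By hypothesis $q$ is isolated, so by Theorem~\ref{thm:principal est real partout} there is a definable predicate $d(\bar x, y, q)$ whose interpretation in any model of $T$ is the distance to the set of realizations of $q$. The subtlety is that I do not merely want \emph{some} realization of $q$ in $N$; I want one whose first $n$ coordinates are close to the given $\bar b$. So the key object to control is the definable predicate
\[
\chi(\bar x) \coloneq \inf_{y}\, d(\bar x, y, q),
\]
which measures, for a given $n$-tuple, how far it is from being extendable to a realization of $q$.

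**The main step: $\chi$ vanishes on realizations of $\tp(\bar a)$.**

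The heart of the argument is to show that $\chi^N(\bar b) = 0$. Here is where I would use that $\tp(\bar a) = \tp(\bar b)$ together with the fact that $\chi$ only depends on $\tp(\bar x)$ (being a definable predicate): since $\chi$ is a definable predicate with free variables among $\bar x$, its value at a tuple depends only on that tuple's type, so $\chi^N(\bar b) = \chi(\tp(\bar b)) = \chi(\tp(\bar a)) = \chi^M(\bar a)$. And $\chi^M(\bar a) = \inf_{y\in M} d^M(\bar a, y, q) = 0$, because $c \in M$ witnesses that $(\bar a, c)$ realizes $q$, so $d^M(\bar a, c, q) = 0$. Hence $\chi^N(\bar b) = 0$. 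I expect this type-invariance bookkeeping to be the step requiring the most care: one must be sure that $\inf_y d(\bar x, y, q)$ is genuinely a definable predicate in the variables $\bar x$ — this follows from the fact, recalled at the end of Section~\ref{sec: defin dans modèle}, that applying $\inf$ over a (possibly infinite, here just one) variable to a definable predicate yields a definable predicate, so the equality of its values at $\bar a$ and $\bar b$ is forced by $\tp(\bar a) = \tp(\bar b)$.

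**Extracting $c'$ and $\bar b'$, then closing.**

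Now $\chi^N(\bar b) = 0$ means $\inf_{y\in N} d^N(\bar b, y, q) = 0$, so for any $\eta > 0$ there is $c'_\eta \in N$ with $d^N(\bar b, c'_\eta, q) < \eta$. Since $d^N(\cdot, \cdot, q)$ is the distance (for the product metric on $N^{n+1}$) to the closed nonempty set $q(N)$ of realizations of $q$ in $N$, there is a genuine realization $(\bar b', c') \in q(N)$ — i.e. $\tp(\bar b', c') = q = \tp(\bar a, c)$ — within distance, say, $2\eta$ of $(\bar b, c'_\eta)$ in the product metric; in particular $d^N(\bar b, \bar b') < 2\eta + \eta = 3\eta$ (using that $c'_\eta$ is itself within $\eta$ of the $(n+1)$st coordinate of a realization, and the product metric dominates each coordinate distance up to the convention's weights — for the finite tuple $\bar x = x_1,\dots,x_n$ I would just take $d$ to be $\sum_i d(x_i,y_i)$ so each coordinate distance is bounded by the total). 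Choosing $\eta = \epsilon/3$ gives $d^N(\bar b, \bar b') < \epsilon$ and $\tp(\bar a, c) = \tp(\bar b', c')$, which is exactly the conclusion. The only remaining care is to make sure one picks the realization of $q$ nearest to the near-realization $(\bar b, c'_\eta)$, which is possible precisely because $d^N(\cdot,q)$ is a distance-to-a-closed-set function and such functions are (up to $\eta$) attained; this is built into Theorem~\ref{thm:principal est real partout}.
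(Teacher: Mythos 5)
Your proof is correct and follows essentially the same route as the paper: both arguments introduce the definable predicate $\inf_{y} d((\bar x,y),p)$ for $p=\tp(\bar a,c)$ (available by Theorem~\ref{thm:principal est real partout} since the type is isolated), observe that it vanishes at $\bar a$ and hence, by $\tp(\bar a)=\tp(\bar b)$, at $\bar b$, and then extract an actual realization of $p$ in $N$ close to $\bar b$ in its first $n$ coordinates. Your final $3\eta$ bookkeeping is slightly more generous than necessary (the sum metric already bounds each coordinate distance by the total, so $2\eta$ suffices), but this is harmless.
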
	

\begin{proof}[Preuve du lemme]
	Soit $p$ le type  de $(\bar a,c)$.
	D'après le théorème~\ref{thm:principal est real partout}, on dispose d'un 
	prédicat définissable $d(\cdot, p)$ 
	dont l'interprétation dans tout modèle de $T$ est la distance à l'ensemble
	des réalisations de $p$.
	Le type de  $\bar a$ contient alors le prédicat définissable suivant:
	\[
	\inf_{y} d((\bar x,y),p).
	\]	
	Soit $D$ l'ensemble des réalisations de $p$ dans $N$.
	Comme $\tp\bar a=\tp\bar b$, on trouve $c_1\in N$ tel que $d^N((\bar b,c_1),D)<\epsilon$.
	Donc il existe $(\bar b',c')\in N^n\times N$ réalisant $\tp(\bar a,c)$ avec 
	$d^N((\bar b,c),(\bar b',c'))<\epsilon$
	en particulier $d^N(\bar b,\bar b')<\epsilon$.	
	\renewcommand*{\qedsymbol}{\(\square_\text{lemme}\)}\end{proof}

Soient maintenant $\bar a,\bar b\in M^n$ avec $\tp(\bar a)=\tp(\bar b)$ et $\epsilon>0$.
On voudrait définir un isomorphisme qui étende l'application $\bar a\mapsto \bar b$,
mais ce n'est pas possible en général, et il va donc falloir perturber les 
choses en suivant le lemme précédent.
Soit 
$\epsilon>0$, fixons une suite $(\epsilon_k)_{k\in\N}$ de réels positifs
telle que $\sum_k \epsilon_k<\epsilon$.

On va construire par récurrence sur $k\in\N$ deux suites $(\bar u_k)$ et 
$(\bar v_k)$ telles que pour tout $k\in\N$, les uplets $\bar u_k$ et $\bar v_k$
sont des $(n+2k)$-uplets de même type, $\bar u_0=\bar a$, $\bar v_0=\bar b$, et si on note 
$\pi_{n+2k}$ la projection sur les $n+2k$ premières coordonnées 
alors
\begin{align*}
d^M(\bar u_k,\pi_{n+2k}(\bar u_{k+1}))&<\epsilon_k,\\
d(\bar v_k,\pi_{n+2k}(\bar v_{k+1}))&<\epsilon_k.
\end{align*}
Pour que cette construction nous permette de construire un isomorphisme entre~$M$ et~$N$,
on a besoin de se donner également une partie $C^M\subseteq M$ 
dénombrable dense et une énumération $(c^M_k)_{k\in\N}$ de $C$ telle que chaque élément 
de~$C^M$ apparaisse une infinité de fois dans l'énumération.
On se donne de même une énumération $(c^N_k)_{k\in\N}$ d'une partie dénombrable dense $C^N$ de $N$
où chaque élément apparait une infinité de fois.
On va faire en sorte
que pour tout $k$, $c^M_k$ apparaisse dans l'uplet $\bar u_k$, et $c^N_k$ apparaisse dans l'uplet $\bar v_k$. 

La construction par récurrence se fait de la manière suivante: on doit commencer
par $\bar u_0=\bar a$ et $\bar v_0=\bar b$. Puis, $\bar u_k$ et $\bar v_k$ étant construits,
\begin{itemize}
	\item On considère le $(n+2k+1)$-uplet $(\bar v_k, c^N_{k+1})$, et on applique alors le lemme~\ref{lem:va-et-vient} pour trouver $\bar w_{k+1}$ tel que 
	$\tp(\bar w_{k+1})=\tp(\bar v_k,c_{k+1})$ et 
	$d(\pi_{n+2k}(\bar w_{k+1}),\bar x_k)<\epsilon_k$ (étape du \emph{va}).
	\item On pose ensuite $\bar u_{k+1}=(\bar w_{k+1},c^M_{k+1})$, et le lemme~\ref{lem:va-et-vient} nous fournit $\bar v_{k+1}$ de même type que $\bar u_{k+1}$
	avec $d(\pi_{n+2k+1}(\bar y_{k+1}), (\bar y_k, c_{k+1}))<\epsilon_k$ donc en 
	particulier $d(\pi_{n+2k}(\bar y_{k+1}),\bar y_k)<\epsilon_k)$ (étape du \emph{vient}).
\end{itemize}
Puisque $\sum_k \epsilon_k$ est finie, pour tout $n\geq 1$ la $n$-ième 
composante de $\bar u_k$ (resp.\ $\bar v_k$)
converge, et on note $u_n$ (resp.\ $v_n$) sa limite. On pose $\bar u=(u_1,\ldots)$ et
$\bar v=(v_1,\ldots)$, alors par construction $\tp(\bar u)=\tp(\bar v)$. De plus 
$\bar u$ est d'image dense dans $M$  puisque pour chaque élément $c\in C^M$ il
y a une infinité de $k\geq 1$ tels que  $c$ apparaisse dans $\bar u_k$, et de même $\bar v$
est d'image dense dans $N$.

Comme $\bar u$ et $\bar v$ ont le même type \footnote{En fait ici seulement
le fait qu'ils aient le même type sans quantificateur compte.}
 et sont d'image dense, l'application
$\alpha: x_i\mapsto y_i$ est une isométrie qui doit préserver les interprétations
des fonctions et des relations. Elle s'étend donc par complétude en l'automorphisme de $M$
recherché, qui envoie $\bar a$ à distance au plus $\epsilon$ de $\bar b$ puisque 
$\sum_k\epsilon_k <\epsilon$.	
\end{proof}

En appliquant le théorème pour des uplets $\bar a$ et $\bar b$ vides, notons qu'on a le corollaire suivant,
qui fait partie du théorème de Ryll-Nardzewski.

\begin{coro}\label{cor: aleph categ}
	Soit $T$ une théorie complète dont tous les $n$-types sont isolés pour $n<\omega$. Alors 
	tous les modèles séparables de $T$ sont isomorphes. \qed
\end{coro}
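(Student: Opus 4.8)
Le plan est de déduire ce corollaire immédiatement du théorème de va-et-vient (théorème~\ref{theo:vaetvient}), appliqué à des uplets \emph{vides}. Soient $M$ et $N$ deux modèles séparables de $T$. Comme $T$ est complète, l'espace $S_0(T)$ est réduit à un seul point : l'application qui à un énoncé $\varphi$ associe son interprétation $\varphi^M\in\R$ --- indépendante du modèle $M$ de $T$ choisi --- identifie isométriquement $\mathfrak P_0^{\mathcal L,T}$ à $\R$, algèbre dont l'unique caractère multiplicatif non nul de norme $\leq 1$ est l'identité. En particulier, si $\bar a\in M^0$ et $\bar b\in N^0$ désignent l'uplet vide, on a automatiquement $\tp(\bar a)=\tp(\bar b)$. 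L'hypothèse du théorème~\ref{theo:vaetvient} --- que tout $m$-type est isolé pour $m<\omega$ --- étant exactement celle du corollaire, on applique ce théorème avec $n=0$ et un $\epsilon>0$ quelconque : il fournit un isomorphisme $\varphi\colon M\to N$, la condition $d^N(\varphi(\bar a),\bar b)<\epsilon$ étant vide pour des uplets vides. Ainsi tous les modèles séparables de $T$ sont isomorphes deux à deux.

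Il n'y a pas vraiment d'obstacle ici : le seul point à soigner est l'unicité du $0$-type, qui provient de la complétude de $T$ (laquelle force $\Th(M)=\Th(N)$ pour tous modèles $M,N$ de $T$). Si l'on préférait se passer de l'énoncé complet du théorème~\ref{theo:vaetvient}, il suffirait de reprendre la construction par va-et-vient de sa preuve en l'amorçant avec $\bar u_0=\bar v_0$ égaux à l'uplet vide ; mais le théorème étant déjà établi, la déduction en une ligne ci-dessus suffit.
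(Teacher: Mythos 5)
Votre démonstration est correcte et suit exactement la voie du texte : le corollaire y est obtenu en appliquant le théorème~\ref{theo:vaetvient} aux uplets vides, l'égalité des $0$-types découlant de la complétude de $T$. Votre rédaction ne fait qu'expliciter ce dernier point, ce qui est bienvenu mais ne change rien à l'argument.
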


Lorsque tous les modèles séparables d'une théorie sont isomorphes, on dit qu'elle est 
\textbf{$\aleph_0$-catégorique}
ou \textbf{séparablement catégorique}. 
En utilisant le théorème de compacité, on montre la réciproque du corollaire précédent: si on 
a une théorie dans un langage dénombrable qui est $\aleph_0$-catégorique, alors tous ses $n$-types 
sont isolés. Mentionnons également le théorème d'omission des types, qui donne un cadre plus général au
résultat ci-dessus et à ce qui suit (cf.\ \cite[Sec.~12]{benyaacovModeltheorymetric2008}).

\subsection{Distance sur l'espace des types}

Afin de prouver le théorème de Ryll-Nardzewski, on a besoin d'une distance sur l'espace des types 
qui raffine sa topologie et en fait un espace \emph{topométrique} \parencite{benyaacovTopometricSpacesPerturbations2008}. 
La définition de cette distance évoquera des souvenirs 
à la lectrice familière avec la distance de Wasserstein et le théorème de 
Kantorovich–Rubinstein 
(cf.\ Lecture 20 dans \cite{dudleyProbabilitiesMetricsConvergence1976}),
ou encore avec les algèbres de Banach
de fonctions lipschitziennes  (voir en particulier l'équation (2.1) de 
\cite{sherbertBanachAlgebrasLipschitz1963}).
Ce n'est pas la définition originelle de \textcite{benyaacovContinuousFirstOrder2010}, mais 
on peut montrer qu'elle lui est équivalente (cf.\ \cite[Thm. 3.3.14]{hallbackMetricModelTheory2020}). Commençons 
par isoler un sous-ensemble important de l'algèbre des prédicats définissables.

\begin{defi}
	Soit $T$ une théorie complète et $K\geq 0$.
	Un prédicat définissable $\varphi\in\mathfrak P^{\mathcal L,T}$ est dit $K$-lipschitzien
	s'il admet une interprétation $K$-lipschitzienne dans un modèle de $T$.
\end{defi}

D'après la remarque~\ref{rmk:unif continuite}, toute théorie complète \emph{sait} quels prédicats définissables
sont $K$-lipschitziens, puisque ceux sont ceux qui admettent la fonction
$K\id_{\R^+}$ comme module d'uniforme continuité. En particulier si $\varphi$ est $K$-lipschitzien, alors
$\varphi^M$ est $K$-lipschitzien dans \emph{tout} modèle $M$ de $T$.

\begin{defi}
	Soit $T$ une théorie complète, soit $n\leq \omega$. Pour deux types $p,q\in S_n(T)$, leur
	distance est donnée par 
	$$
	\dtp(p,q)=\sup \left\{ \abs{\varphi(p)-\varphi(q)}\colon 
	\varphi\in\mathfrak P^{\mathcal L,T}_n \text{ est }1\text{-lipschitzien}\right\}.
	$$
\end{defi}

Le lemme suivant va nous permettre de voir que $\dtp$ est bien une distance.

\begin{lemm}\label{lem: lipschitz est dense}
	Pour tout $n\leq \omega$, la sous-algèbre des prédicats définissables lipschitziens est dense 
	dans $\mathfrak P^{\mathcal L,T}_n$.
\end{lemm}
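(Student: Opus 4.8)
The plan is to prove the density by an explicit Lipschitz regularization, i.e.\ by inf-convolving an arbitrary definable predicate with a large multiple of the metric. First I would observe that the Lipschitz definable predicates really do form a subalgebra of $\mathfrak P^{\mathcal L,T}_n$ (sums, scalar multiples and products of bounded Lipschitz functions are Lipschitz), so that it suffices to show that an arbitrary $\varphi\in\mathfrak P^{\mathcal L,T}_n$ is a uniform limit of Lipschitz definable predicates. I would fix such a $\varphi$, fix $\epsilon>0$, and fix once and for all a model $M$ of $T$, working with interpretations on $M^n$ equipped with the compatible distance $d$ (the definable predicate $d(x_1,y_1)+\dots+d(x_n,y_n)$ when $n<\omega$, and $\sum_{i\in\N}2^{-i}d(x_i,y_i)$ when $n=\omega$); recall that $\varphi^M$ is bounded — say $\abs{\varphi^M}\leq B$ on $M^n$ — and uniformly continuous.

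The candidate approximant, for $K>0$, is
\[
\psi_K(\bar x)\coloneq\inf_{\bar y}\bigl(\varphi(\bar y)+K\,d(\bar x,\bar y)\bigr),
\]
where $\bar y$ is a disjoint copy of the variables $\bar x$. I would first check that $\psi_K$ is a genuine element of $\mathfrak P^{\mathcal L,T}_n$: the expression $\varphi(\bar y)+K\,d(\bar x,\bar y)$ is a definable predicate in the variables $(\bar x,\bar y)$ — a renaming of variables in $\varphi$, plus a scalar multiple of the definable predicate $d$ — and taking the infimum over the (possibly infinite) tuple $\bar y$ yields a definable predicate by the extension of the quantifiers to $\mathfrak P^{\mathcal L,T}$ over an arbitrary set of variables discussed at the end of Section~\ref{sec:bases}. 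Next I would check that $\psi_K^M$ is $K$-Lipschitz for $d^M$: for $\bar a_1,\bar a_2,\bar b\in M^n$ the triangle inequality gives $\varphi^M(\bar b)+K\,d^M(\bar a_1,\bar b)\leq\varphi^M(\bar b)+K\,d^M(\bar a_2,\bar b)+K\,d^M(\bar a_1,\bar a_2)$, and taking the infimum over $\bar b$ yields $\psi_K^M(\bar a_1)\leq\psi_K^M(\bar a_2)+K\,d^M(\bar a_1,\bar a_2)$, and symmetrically; so $\psi_K$ is a Lipschitz definable predicate.

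It then remains to show that $\psi_K^M\to\varphi^M$ uniformly on $M^n$ as $K\to+\infty$. Taking $\bar y=\bar x$ in the infimum gives $\psi_K^M\leq\varphi^M$ pointwise. For the reverse inequality I would use the uniform continuity of $\varphi^M$ to pick $\delta>0$ with $d^M(\bar a,\bar b)<\delta\Rightarrow\abs{\varphi^M(\bar a)-\varphi^M(\bar b)}<\epsilon$, and then take $K\geq 2B/\delta$: for $\bar a,\bar b\in M^n$, if $d^M(\bar a,\bar b)<\delta$ then $\varphi^M(\bar b)+K\,d^M(\bar a,\bar b)\geq\varphi^M(\bar b)\geq\varphi^M(\bar a)-\epsilon$, whereas if $d^M(\bar a,\bar b)\geq\delta$ then $\varphi^M(\bar b)+K\,d^M(\bar a,\bar b)\geq -B+K\delta\geq B\geq\varphi^M(\bar a)$; taking the infimum over $\bar b$ then gives $\psi_K^M(\bar a)\geq\varphi^M(\bar a)-\epsilon$. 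Hence $\norm{\varphi^M-\psi_K^M}_\infty\leq\epsilon$, which by Lemma~\ref{lem: norme independante de M} and Remark~\ref{rmq: identification prédicats} means $\norm{\varphi-\psi_K}_T\leq\epsilon$; choosing $K$ large enough finishes the proof.

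The argument is essentially routine; the hard part — to the extent there is one — is keeping track of the two formal points: that the inf-convolution is a legitimate definable predicate (handled by the infimum-over-infinitely-many-variables construction of Section~\ref{sec:bases}), and that the convergence $\psi_K^M\to\varphi^M$ is genuinely uniform, which relies crucially on $\varphi^M$ being both \emph{bounded} and \emph{uniformly} continuous. Notably, no appeal to compactness of $S_n(T)$ or to Gelfand duality is needed: everything takes place inside a single fixed model of $T$.
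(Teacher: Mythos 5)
Votre preuve est correcte et suit essentiellement la même démarche que celle de l'exposé : la même inf-convolution $\inf_{\bar y}\left(\varphi(\bar y)+Kd(\bar x,\bar y)\right)$, le même argument de $K$-lipschitzianité par inégalité triangulaire, et la même disjonction de cas ($d^M(\bar a,\bar b)<\delta$ ou $\geq\delta$) pour la convergence uniforme. Votre rédaction du premier cas est même légèrement plus propre, puisque vous n'utilisez que la minoration $\varphi^M(\bar b)\geq\varphi^M(\bar a)-\epsilon$ là où il le faut.
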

\begin{proof}
	Soit $\varphi(\bar x)\in \mathfrak P^{\mathcal L,T}_n$ un prédicat définissable,
	et fixons un modèle $M$ de $T$.
	Pour chaque  $K\geq 0$, considérons le prédicat définissable 
	$$\varphi_K(\bar x)=\inf_{\bar y} \left(\varphi(\bar y)+Kd(\bar x,\bar y)\right).$$
	Montrons tout d'abord que $\varphi_K$ est bien $K$-lipschitzien.
	Soit $\bar b\in M^n$, la fonction
	$\varphi^M_{K,\bar b}:\bar x\in M^n\mapsto \varphi^M(\bar b)+Kd(\bar x,\bar b)$ est $K$-lipschitienne.
	L'interprétation de $\varphi_K$ dans~$M$ étant l'infimum des fonctions $K$-lipschitziennes
	$\varphi^M_{K,\bar b}$, elle doit donc
	elle même être $K$-lipschitzienne. 
	
	Reste à montrer que $\norm{\varphi-\varphi_K}_T$ tend vers $0$ quand $K$ tend vers $+\infty$. 
	Remarquons que par définition $\varphi_K\leq \varphi$.
	Soit maintenant $\epsilon>0$.
	Comme $\varphi$ est uniformément continue on dispose de $\delta>0$ tel que 
	$d^M(\bar a,\bar b)<\delta$ implique $\abs{\varphi^M(\bar a)-\varphi^M(\bar b)}<\epsilon$. 
	Soit $K>0$ tel que $\frac{2\norm{\varphi}_T}{K}<\min(\delta,\epsilon)$.

    Prenons $\bar a\in M^n$. Pour $\bar b\in M^n$, on a deux cas à considérer.
    \begin{itemize}
    	\item Si  $d^M(\bar a,\bar b)<\delta$, alors $\varphi^M(\bar b)$ est $\epsilon$-proche de 
    	$\varphi^M(\bar a)$, et $Kd^M(\bar a,\bar b)<\epsilon$, donc $\varphi^M(\bar b)+Kd^M(\bar a,\bar b)$
    	est $2\epsilon$-proche de $\varphi^M(\bar a)$.
    	\item Sinon $d^M(\bar a,\bar b)\geq \delta$,
    	alors on a $Kd^M(\bar a,\bar b)\geq 2\norm{\varphi}_T$ et donc 
    	$\varphi^M(\bar b)+Kd^M(\bar a,\bar b)\geq
    	\varphi^M(\bar a)$. 
    \end{itemize}
	On conclut donc en prenant l'infimum sur tous les $\bar b$ dans $M^n$ que 
	$\absbig{\varphi^M(\bar a)-\varphi^M_K(\bar a)}\leq 2\epsilon$.
   
	Ceci étant valable pour tout $\bar a\in M^n$, on a montré que 
	$\normbig{\varphi^M-\varphi^M_K}_\infty\leq 2\epsilon$ dès lors que $K$ vérifie 
	$\frac{2\norm{\varphi}_T}{K}<\min(\delta,\epsilon)$, ce qui termine la preuve.
\end{proof}

\begin{prop}\label{prop: dtp}
	La fonction $\dtp$ est bien une distance sur $S_n(T)$, elle raffine la topologie logique,
	et pour tous $\bar a, \bar b\in M^n$, on a $\dtp(\tp(\bar a),\tp(\bar b))\leq d^M(\bar a,\bar b)$.
\end{prop}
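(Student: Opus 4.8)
Le plan est de traiter les trois affirmations séparément, la dernière servant en fait de point de départ.

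Premièrement, je démontrerais l'inégalité $\dtp(\tp(\bar a),\tp(\bar b))\leq d^M(\bar a,\bar b)$. Le point clé, déjà dégagé dans la remarque~\ref{rmk:unif continuite} et la discussion qui suit la définition des prédicats $K$-lipschitziens, est que tout prédicat définissable $1$-lipschitzien $\varphi\in\mathfrak P^{\mathcal L,T}_n$ s'interprète dans \emph{tout} modèle $M$ de $T$ comme une fonction $1$-lipschitzienne pour la distance $d^M$ sur $M^n$. On a alors $\abs{\varphi(\tp(\bar a))-\varphi(\tp(\bar b))}=\abs{\varphi^M(\bar a)-\varphi^M(\bar b)}\leq d^M(\bar a,\bar b)$, et il suffit de passer au supremum sur les $\varphi$ $1$-lipschitziens. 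La même estimation montre que $\dtp$ est à valeurs finies : pour $p,q\in S_n(T)$, un prédicat $1$-lipschitzien $\varphi$ et $\epsilon>0$, la densité de $\tp(M^n)$ dans $S_n(T)$ (théorème~\ref{thm:dualité}) fournit $\bar a,\bar b\in M^n$ avec $\abs{\varphi(p)-\varphi^M(\bar a)}<\epsilon$ et $\abs{\varphi(q)-\varphi^M(\bar b)}<\epsilon$, d'où $\abs{\varphi(p)-\varphi(q)}<2\epsilon+d^M(\bar a,\bar b)$ ; comme $d^M$ est bornée sur $M^n$ (les modèles de $T$ étant de diamètre borné) et $\epsilon$ arbitraire, on obtient une borne indépendante de $\varphi$, donc $\dtp(p,q)<\infty$.

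Ensuite, pour voir que $\dtp$ est une distance, la symétrie et l'inégalité triangulaire découlent directement de la définition ; seule la séparation demande un argument. Si $\dtp(p,q)=0$, alors $\varphi(p)=\varphi(q)$ pour tout prédicat définissable $1$-lipschitzien, donc, en divisant par la constante de Lipschitz, pour tout prédicat définissable lipschitzien. Or le lemme~\ref{lem: lipschitz est dense} assure que ces derniers sont denses dans $\mathfrak P^{\mathcal L,T}_n$ ; comme $p$ et $q$ sont des fonctionnelles continues, on conclut que $\varphi(p)=\varphi(q)$ pour tout $\varphi\in\mathfrak P^{\mathcal L,T}_n$, c'est-à-dire $p=q$.

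Enfin, pour le raffinement de la topologie logique, je remarquerais que cette dernière est la topologie faible-* sur l'espace des caractères (cf.\ la remarque~\ref{rmq: base topo logique}), c'est-à-dire la topologie la moins fine rendant continues les fonctions d'évaluation $p\mapsto\varphi(p)$ ; il suffit donc de montrer que chacune d'elles est $\dtp$-continue. C'est clair si $\varphi$ est $K$-lipschitzien, puisqu'alors $\abs{\varphi(p)-\varphi(q)}\leq K\,\dtp(p,q)$ par définition de $\dtp$. Pour $\varphi$ quelconque et $\epsilon>0$, j'approcherais $\varphi$ à $\epsilon/3$ près en norme $\norm{\cdot}_T$ par un prédicat $K$-lipschitzien $\psi$ (lemme~\ref{lem: lipschitz est dense}) ; comme les types sont des fonctionnelles de norme $\leq 1$, on a $\abs{\varphi(p)-\varphi(q)}\leq 2\norm{\varphi-\psi}_T+\abs{\psi(p)-\psi(q)}\leq \frac{2\epsilon}{3}+K\,\dtp(p,q)$, ce qui est $<\epsilon$ dès que $\dtp(p,q)<\epsilon/(3K)$. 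Aucune étape n'est réellement difficile : l'unique ingrédient de substance est le lemme~\ref{lem: lipschitz est dense}, utilisé pour la séparation comme pour le raffinement de la topologie, avec en amont l'observation que les prédicats $1$-lipschitziens s'interprètent de manière $1$-lipschitzienne dans tous les modèles ; la petite subtilité à ne pas oublier est la finitude de $\dtp$, où l'on se sert de la densité des réalisations dans l'espace des types.
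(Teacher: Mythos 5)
Votre preuve est correcte et suit essentiellement la même démarche que celle du texte : l'ingrédient central est dans les deux cas le lemme~\ref{lem: lipschitz est dense}, et l'argument de raffinement de la topologie (approximation d'un prédicat quelconque par un prédicat $K$-lipschitzien à $\epsilon/3$ près, puis boule de rayon $\epsilon/(3K)$) est identique. La seule différence, mineure, est que vous établissez la séparation directement par densité des prédicats lipschitziens et continuité des types comme fonctionnelles, là où le texte la déduit du fait que $\dtp$ raffine la topologie logique, qui est séparée ; votre vérification explicite de la finitude de $\dtp$ est un complément bienvenu mais non essentiel.
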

\begin{proof}
	Le fait que $\dtp$ satisfasse l'inégalité triangulaire et soit symétrique est clair,
	seul la séparation pourrait poser problème. On va la montrer directement en montrant
	le deuxième point de la proposition, à savoir que $\dtp$
	raffine la topologie compacte (donc séparée) de l'espace des types.
	
	Soit donc $p$ un $n$-type, soit $U$ un voisinage de $p$.
	On dispose alors d'un prédicat définissable $\varphi$
	et d'un $\epsilon>0$ tels que $U\supseteq\{q\in S_n(T): \abs{\varphi(q)-\varphi(p)}\leq \epsilon\}$.
	
	D'après le lemme précédent, on dispose d'un prédicat définissable $\psi$ qui est $K$-lipschitzien
	pour un certain $K>0$
	et tel que $\norm{\varphi-\psi}_T<\frac\epsilon 3$. 
	Alors pour tout $q$ tel que $\dtp(p,q)<\frac{\epsilon}{3K}$,
	on aura $\abs{\frac{\psi(p)}K-\frac{\psi(q)}K}\leq \frac{\epsilon}{3K}$.
	En utilisant l'inégalité triangulaire ainsi que l'inégalité $\norm{\varphi-\psi}_T<\frac\epsilon 3$, 
	on conclut que la $\dtp$
	boule de rayon $\frac{\epsilon}{3K}$ autour de $p$ est bien contenue dans $U$, 
	ce qui termine la preuve que $\dtp$ raffine la topologie logique.
	
	Il nous reste alors à montrer que $\dtp(\tp(\bar a),\tp(\bar b))\leq d^M(\bar a,\bar b)$, mais 
	c'est une conséquence immédiate du fait que dans la définition de $\dtp$, on ne considère que 
	des prédicats $1$-lipschitziens.
\end{proof}

\subsection{Preuve du théorème de Ryll-Nardzewski}

On commence par une proposition importante qui fait le lien avec la section~\ref{sec: prp}
et généralise ce qu'on avait fait pour $\Aut([0,1],\lambda)$ (voir le théorème~\ref{thm: real defin xmu}).

\begin{prop}\label{prop:tp realise si compacité}
	Soit $T$ une théorie complète, soit $M$ un modèle de $T$ et soit $n<\omega$. 
	Si l'action diagonale de $\Aut(M)$ sur $M^n$ est 
	approximativement cocompacte, alors  tout $n$-type $p\in S_n(T)$
	est isolé.
\end{prop}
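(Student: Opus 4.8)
The plan is to realise $p$ in the very model $M$ and to exhibit a definable predicate on $M^n$ whose zero set is exactly the set of realisations of $p$; Theorem~\ref{thm:definissabilité} then upgrades this to definability of that set, which by Definition~\ref{df:type isolé} is precisely the statement that $p$ is isolated.

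Write $G=\Aut(M)$ and, as in Proposition~\ref{prop: quotient type}, factor $\tp\colon M^n\to S_n(T)$ as $\tp=\pi\circ q$, where $q\colon M^n\to Z:=G\bbslash M^n$ is the uniformly continuous surjective quotient by the diagonal isometric action of $G$, and $\pi\colon Z\to S_n(T)$ is uniformly continuous. By hypothesis $G\acts M^n$ is approximately cocompact, so $(Z,d_G)$ is complete (Lemma~\ref{lem:completude}) and precompact (by definition), hence compact; then $\pi(Z)=\tp(M^n)$ is compact, hence closed, and dense in $S_n(T)$ by Theorem~\ref{thm:dualité}, so $\pi$ is onto. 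In particular $p$ is realised in $M$; moreover, pulling back along $\pi$, the algebra $\mathfrak P^{\mathcal L,T}_n\cong\mathcal C(S_n(T),\R)$ embeds isometrically into the separable algebra $\mathcal C(Z,\R)$ (here $Z$ is compact metric), so $S_n(T)$ is compact \emph{metrizable}. Fix a compatible metric $\rho$ on $S_n(T)$, and set $D:=\tp^{-1}(\{p\})$, a nonempty closed subset of $M^n$ which is $G$-invariant because automorphisms are elementary; its image $\bar D:=q(D)=\pi^{-1}(\{p\})$ is closed in $Z$.

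Next I would take $h:=\rho(\cdot,p)\in\mathcal C(S_n(T),\R)$, which is $\geq 0$ and vanishes exactly at $p$, and let $\varphi\in\mathfrak P^{\mathcal L,T}_n$ be the positive-valued definable predicate with $\widehat\varphi=h$ furnished by Gelfand duality (Theorem~\ref{thm:dualité}); then $\varphi^M=h\circ\tp$, so $(\varphi^M)^{-1}(\{0\})=\tp^{-1}(h^{-1}(\{0\}))=D$. It remains to check the metric clause of Theorem~\ref{thm:definissabilité}: for each $\epsilon>0$ there is $\delta>0$ with $\varphi^M(\bar a)\leq\delta\Rightarrow d(\bar a,D)\leq\epsilon$. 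Since $D$ is $G$-invariant and $G$ acts by isometries, $d(\bar a,D)=d_G(q(\bar a),\bar D)$; were the clause to fail for some $\epsilon$, pick $\bar a_k$ with $\varphi^M(\bar a_k)\leq 1/k$ but $d_G(q(\bar a_k),\bar D)>\epsilon$, extract from the compact space $Z$ a subsequence with $q(\bar a_k)\to z_\infty$, note $d_G(z_\infty,\bar D)\geq\epsilon$, hence $\pi(z_\infty)\neq p$ and so $h(\pi(z_\infty))>0$; but $h\circ\pi$ is continuous and $h(\pi(q(\bar a_k)))=\varphi^M(\bar a_k)\to 0$, a contradiction. Theorem~\ref{thm:definissabilité} then gives that $D$ is definable in $M$, so $p$ is isolated.

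The one genuinely delicate point is producing an \emph{honest} definable predicate --- that is, a function \emph{continuous for the logical topology} on $S_n(T)$ --- with zero set exactly $\{p\}$; this rests on $S_n(T)$ being metrizable, which I would extract (as above) from the compactness of $G\bbslash M^n$ itself rather than from a countability hypothesis on $\mathcal L$. Once that is in place, the remainder is a routine compactness-and-continuity argument, the only thing to bear in mind being that the distance-to-$D$ function on $M^n$ is computed in the compact quotient $Z$, and that the logical topology on $S_n(T)$ is Hausdorff.
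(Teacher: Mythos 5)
Your proof is correct, and its overall skeleton --- realise $p$ in $M$ using compactness of $\Aut(M)\bbslash M^n$, produce a positive definable predicate whose zero set is exactly the set $D$ of realisations of $p$, then verify the metric clause of the théorème~\ref{thm:definissabilité} by a compactness-and-contradiction argument in the orbit space --- is the same as the paper's. Where you genuinely diverge is in how you manufacture the isolating predicate. The paper uses the distance $\dtp$ de l'appendice : la proposition~\ref{prop: dtp} fait de $\tp$ une contraction de $(\Aut(M)\bbslash M^n,d_{\Aut(M)})$ vers $(S_n(T),\dtp)$, donc $(S_n(T),\dtp)$ est compact, et comme $\dtp$ raffine la topologie logique elle doit l'induire ; la fonction $\dtp(\cdot,p)$ est alors continue pour la topologie logique, donc un prédicat définissable par dualité de Gelfand, d'ensemble de zéros $D$. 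Vous contournez entièrement $\dtp$ : le tiré en arrière le long de la surjection $\pi\colon G\bbslash M^n\to S_n(T)$ plonge isométriquement $\mathcal C(S_n(T),\R)$ dans l'algèbre séparable $\mathcal C(Z,\R)$ (avec $Z$ compact métrique), donc $S_n(T)$ est compact métrisable, et n'importe quelle distance compatible $\rho$ fournit le prédicat $\rho(\cdot,p)$. Les deux arguments sont valables ; votre chemin évite le lemme de densité des prédicats lipschitziens (lemme~\ref{lem: lipschitz est dense}) et tout l'appareil topométrique, au prix d'un choix de prédicat moins canonique, tandis que la distance $\dtp$ du texte est un outil standard réutilisé ailleurs dans le sujet. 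Votre argument final par l'absurde (extraction d'une sous-suite convergente dans le quotient compact, puis continuité de $h\circ\pi$ et de $d_G(\cdot,\bar D)$) est, à la reformulation près, identique à celui du texte.
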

\begin{proof}
	Considérons l'application $\tp:M^n\to S_n(T)$ qui à $\bar a\in M^n$ associe $\tp(\bar a)$.
	C'est une contraction si on munit $S_n(T)$ de la distance $\dtp$,
	elle passe donc au quotient en une contraction
	$\tilde \tp:\Aut(M)\bbslash M^n\to S_n(T)$.
	Par compacité et densité de l'image de $\tp$ dans $S_n(T)$, l'application $\tilde\tp$ est surjective,
	donc $\tp$ elle-même est surjective: tous les $n$-types sont réalisés dans $M$.

	L'application $\tilde \tp:(\Aut(M)\bbslash M^n,d^M_{\Aut(M)})\to (S_n(T),\dtp)$ est  une contraction, 
	en particulier elle est continue. Donc 
	$(S_n(T),\dtp)$ est compact, et comme $\dtp$ raffine la topologie de $S_n(T)$
	on conclut par compacité que $\dtp$ induit la topologie de $S_n(T)$.
	
	Soit maintenant un $n$-type $p$, soit $D$ l'ensemble de ses réalisations dans $M^n$,
	on doit montrer que $d(\cdot, D)$ est un prédicat définissable.
	La fonction $\dtp(\cdot,p)$ est continue sur
	l'espace des types: par le théorème~\ref{thm:dualité}, c'est un prédicat définissable,
	et son ensemble de zéros dans $M$ est clairement l'ensemble $D$ des réalisations de $p$.
	
	Il suffit maintenant de montrer que $\dtp(\cdot,p)$ satisfait les hypothèses du théorème~\ref{thm:definissabilité}, c'est-à-dire que pour tout $\epsilon>0$ il existe
	$\delta>0$ tel que  pour tout $\bar b\in M^n$, si $\dtp(\tp(\bar b),p)<\delta$ 
	alors $d(\bar b,D)<\epsilon$.
	Supposons par l'absurde que ce ne soit pas le cas: on dispose alors d'une suite $(\bar b_k)_{k\in\N}$
	telle que $\dtp(\tp(\bar b_k),p)\to 0$ mais $d(\bar b_k,D)\geq \epsilon$ pour tout $k$.
	Par compacité de $\Aut(M)\bbslash M^n$, la suite 
	$([\bar b_k])_{k\in\N}$ admet une sous-suite convergente. Soit~$[\bar b]$ sa limite, alors par continuité de $\dtp$ on a $\tp(\bar b)=p$
	et d'autre part $d(\bar b,D)\geq \epsilon$, ce qui est contradictoire.
\end{proof}
	
Nous pouvons maintenant prouver le théorème de Ryll-Nardzewski sous sa forme pertinente pour
l'étude des groupes polonais Roelcke-précompacts. 

\begin{theo}\label{thm: Ryll-Nardzewski}
	Soit $G$ le groupe d'automorphismes d'une structure métrique $(M,d)$ séparable. Si l'action
	de $G$ sur $M$ est approximativement oligomorphe, alors pour tout $n\leq\omega$ l'application 
	 de $G\bbslash M^n$ dans $S_n(T)$ est 
	une bijection, et $T$ est $\aleph_0$-catégorique.
\end{theo}
\begin{proof}
	Soit $n< \omega$. D'après la proposition précédente, tout $n$-type est isolé, en particulier
	tout $n$-type est réalisé dans $M$: $G\bbslash M^n\to S_n(T)$ obtenue par passage au quotient
	est surjective. Son injectivité est une conséquence immédiate du théorème~\ref{theo:vaetvient} pour $M=N$.
	Le fait que $T$ soit $\aleph_0$-catégorique
	découle du corollaire~\ref{cor: aleph categ} puisque tout $n$-type est isolé.	
	Le cas $n=\omega$ découle du cas précédent. 
\end{proof}
\begin{rema}
	Une application directe du théorème~\ref{thm:dualité} nous donne sous les hypothèses
	du théorème ci-dessus une interprétation
	très concrète des prédicats définissables vus comme des fonctions sur $M^n$:
	ce sont exactement les fonctions continues sur $M^n$ qui sont $G$-invariantes.
\end{rema}

\begin{rema}Le théorème de Ryll-Nardzewski sous sa forme générale dit en plus que
réciproquement, si la théorie d'une structure est $\aleph_0$-catégorique
\emph{et le langage est dénombrable} alors
son groupe d'automorphismes agit de manière approximativement oligomorphe 
sur la structure. Pour le voir, il faut faire appel au théorème d'omission
des types, cf.\ la section~12 de \textcite{benyaacovModeltheorymetric2008}. Ce théorème tire
son nom de son homologue en théorie des modèles classique, dû à Ryll-Nardzewski
mais aussi indépendamment à Engeler et  Svenonius en 1959. La version métrique ci-dessus est
due à Ben Yaacov, Berenstein, Henson et Usvyatsov.
\end{rema}

\begin{coro}\label{coro:closure}
	Soit $G$ le groupe d'automorphismes d'une structure métrique séparable $(M,d)$, supposons que l'action
	de $G$ sur $M$ soit approximativement oligomorphe. Alors l'adhérence de~$G$
	dans~$M^M$ est égale à l'ensemble des plongements élémentaires de~$M$ dans~$M$, et 
	s'identifie naturellement au complété $\widehat G_\mathcal L$ de~$G$ pour sa structure uniforme
	gauche.
\end{coro}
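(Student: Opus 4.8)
The corollary follows by combining Theorem~\ref{thm: Ryll-Nardzewski} (or rather its ingredients, Theorem~\ref{theo:vaetvient} and Proposition~\ref{prop:tp realise si compacité}) with Proposition~\ref{prop: plongements comme type}. The strategy is essentially the one already carried out for $\MAlg([0,1],\lambda)$ in Proposition~\ref{prop: adhérence Aut}, but now in the general approximately oligomorphic setting. First I would note the easy inclusion: since the set of elementary embeddings $M\to M$ is closed in $M^M$ (it is cut out by the closed conditions $\varphi^M(\bar a)=\varphi^M(\rho(\bar a))$ as $\varphi$ ranges over formulas and $\bar a$ over tuples), and since every $g\in G$ is an automorphism, hence elementary, the closure $\overline G$ is contained in the space of elementary self-embeddings of $M$.

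For the reverse inclusion, let $\rho:M\to M$ be an elementary embedding. Fix a countable dense sequence $(\alpha_i)_{i\in\N}$ in $M$ and let $p=\tp((\alpha_i)_{i\in\N})\in S_\omega(T)$. Since $\rho$ is elementary, $\tp((\rho(\alpha_i))_{i\in\N})=p$ as well. To show $\rho\in\overline G$ it suffices to approximate $\rho$ on finitely many points $\alpha_1,\dots,\alpha_n$ to within $\epsilon$ by some $g\in G$; that is, to find $g$ with $\tp(g(\alpha_1),\dots,g(\alpha_n))$ equal to — and the tuple $\epsilon$-close to — $(\rho(\alpha_1),\dots,\rho(\alpha_n))$. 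Now $\tp(\alpha_1,\dots,\alpha_n)=\tp(\rho(\alpha_1),\dots,\rho(\alpha_n))$ since $\rho$ is elementary, and by Proposition~\ref{prop:tp realise si compacité} (applied to the approximately cocompact action $G\acts M^n$, which holds by approximate oligomorphy) every $n$-type over $T$ is isolated. Theorem~\ref{theo:vaetvient}, applied with $M=N$ and the two tuples $\bar a=(\alpha_1,\dots,\alpha_n)$, $\bar b=(\rho(\alpha_1),\dots,\rho(\alpha_n))$, then produces an automorphism $\varphi\in G$ with $d^M(\varphi(\bar a),\bar b)<\epsilon$, i.e.\ $d^M(\varphi(\alpha_i),\rho(\alpha_i))<\epsilon$ for each $i\le n$. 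Since $\epsilon$ and $n$ were arbitrary, $\rho$ lies in the closure of $G$ in $M^M$.

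Finally, for the identification with the left completion $\widehat G_{\mathcal L}$: fix a dense sequence $(\alpha_i)$ in $M$ and equip the space of elementary self-embeddings with the distance $d(\rho_1,\rho_2)=\sum_{i\in\N}2^{-i}\min(1,d^M(\rho_1(\alpha_i),\rho_2(\alpha_i)))$. This distance is complete (a Cauchy sequence of elementary embeddings converges pointwise on the dense set, extends by completeness of $M$ to an isometry of $M$, and the limit remains an elementary embedding because each defining condition is closed), its restriction to $G$ is a left-invariant compatible distance inducing the topology of $G$ (this is exactly the formula recalled in Section~\ref{sec:Roelcke uniforme}), and $G$ is dense in the ambient space by the previous paragraph; hence the space of elementary self-embeddings is precisely the completion of $(G,d_{\mathcal L})$, which is $\widehat G_{\mathcal L}$.

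The only point requiring a little care — and the one I would flag as the main obstacle — is the verification that the set of elementary self-embeddings is closed in $M^M$ and that the candidate left-invariant metric above is complete with limits again elementary; both reduce to the observation that elementarity is an intersection of conditions of the form "$\varphi^M(\bar a)=\varphi^M(\rho(\bar a))$", each closed in the topology of pointwise convergence since $\varphi^M$ is (uniformly) continuous. Everything else is a direct citation of Theorem~\ref{theo:vaetvient}, Proposition~\ref{prop:tp realise si compacité}, and Proposition~\ref{prop: plongements comme type}.
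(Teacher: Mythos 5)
Votre preuve est correcte et suit essentiellement le même chemin que celle du texte : inclusion facile par fermeture de l'ensemble des plongements élémentaires, densité de $G$ via l'égalité des types des uplets $(\alpha_1,\dots,\alpha_n)$ et $(\rho(\alpha_1),\dots,\rho(\alpha_n))$, puis identification du complété à gauche par la distance $\sum_i 2^{-i}d^M(\rho_1(\alpha_i),\rho_2(\alpha_i))$. La seule différence est cosmétique : vous invoquez directement le théorème~\ref{theo:vaetvient} et la proposition~\ref{prop:tp realise si compacité} là où le texte cite l'injectivité de $G\bbslash M^n\to S_n(T)$ (théorème~\ref{thm: Ryll-Nardzewski}), qui en découle.
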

\begin{proof}
	L'ensemble des plongements élémentaires de $M$ dans $M$ étant l'ensemble des 
	plongements qui commutent aux interprétations des prédicats définissables,
	c'est un fermé de $M^M$. Puisque tout automorphisme est un plongement 
	élémentaire, l'adhérence de $G$ est 
	incluse dans ce fermé.
	
	Ensuite, montrons que $G$ est dense. Soit $\rho: M\to M$ un plongement élémentaire. 
	Soient $a_1,\dots,a_n\in M^n$ et $\epsilon>0$, il s'agit de trouver $g\in G$
	tel que $d(ga_i,\rho(a_i))<\epsilon$ pour tout $i=1,\dots,n$. Mais comme $\rho$
	est élémentaire, $\tp(a_1,\dots,a_n)=\tp(\rho(a_1),\dots,\rho(a_n))$, et donc par injectivité
	de l'application $G\bbslash M^n\to S_n(T)$ on conclut que $(a_1,\dots,a_n)$ et $(\rho(a_1),\dots,\rho(a_n))$
	sont dans la même adhérence de $G$-orbite pour l'action diagonale de~$G$ sur~$M^n$, 
	ce qui donne immédiatement
	le résultat voulu.

	Pour montrer que l'adhérence de $G$ dans $M^M$ s'identifie avec $\widehat G_\mathcal L$,
	il reste à montrer que la structure uniforme induite sur $G$ coïncide 
	avec sa
	sa structure uniforme à  gauche puisque $M^M$ est un espace uniforme complet. 
	Or cette dernière est clairement invariante par $G$-multiplication à gauche
	et compatible avec la topologie de $G$, ce qui termine la preuve\footnote{Plus concrètemenent, on 
	peut comme dans la preuve de la proposition~\ref{prop: adhérence Aut} fixer $(a_n)_{n\in\N}$ dénombrable dense dans $M$, considérer 
	la distance $\sum_{n\in\N}\frac 1{2^n}d^M(\rho(a_n),\rho'(a_n))$
	sur l'espace fermé des applications $1$-lipschitziennes $M\to M$ 
	et constater qu'elle définit une distance compatible complète qui se restreint en une distance
	invariante à gauche sur $G$.}.
\end{proof}

Mis bout à bout avec la proposition~\ref{prop: plongements comme type} qui identifiait les plongements élémentaires à un ensemble d'éléments
de~$M^\N$ partageant le même type (et formant donc un sous-ensemble définissable de~$M^\N$ 
d'après la Proposition~\ref{prop:tp realise si compacité}), le corollaire précédent nous fournit donc 
une identification naturelle entre $\widehat G_{\mathcal L}$ et un sous-ensemble définissable de~$M^\N$.
Cette identification a été utilisée de manière cruciale pour $\Aut([0,1],\lambda)$ au début de la preuve du théorème~\ref{THM:MAINTER}
et reste fondamentale dans le cas général.
Elle est également le premier pas d'un théorème de Ben Yaacov et Kaïchouh\footnote{Plus précisément, le fait 
	que nous venons d'établir correspond
	à la proposition 12 de \textcite{benyaacovReconstructionSeparablyCategorical2016}.} qui dit que deux groupes
d'automorphismes de structures métriques $\aleph_0$-catégoriques sont topologiquement isomorphes 
si et seulement si les structures sous-jacentes sont \emph{bi-interprétables}, généralisant au cas
métrique un résultat 
d'\textcite{ahlbrandtQuasiFinitelyAxiomatizable1986}.

\printbibliography

\end{document}
